\def\normsup#1{\|#1\|_{L^\infty}}
\def\normld#1{\|#1\|_{L^2}}
\def\inte#1{
\displaystyle\mathop{#1\kern0pt}^\circ }
\newcommand{\ef}{ \hfill $ \blacksquare $ \vskip 3mm}
\newcommand{\beqo}{\begin{equation*}}
\newcommand{\eeqo}{\end{equation*}}
\newcommand{\beno}{\begin{eqnarray*}}
\newcommand{\eeno}{\end{eqnarray*}}
\def\normld#1{\|#1\|_{L^2}}
\def\normsup#1{\|#1\|_{L^\infty}}
\def\normp#1{\|#1\|_{L^p}}
\def\normop#1{\|#1\|_{L^1_t(L^p)}}
\def\normo#1{\|#1\|_{L^1_t(L^2)}}
\def\normbp#1#2{\|#1\|_{B^{#2}_{p,1}}}
\def\normbq#1#2{\|#1\|_{B^{#2}_{q,1}}}
\def\normb#1#2{\|#1\|_{B^{#2}_{2,1}}}
\def\normBp#1#2{\|#1\|_{\widetilde{L}_t^{\infty}(B^{#2}_{p,1})}}
\def\normBq#1#2{\|#1\|_{\widetilde{L}_t^{\infty}(B^{#2}_{q,1})}}
\def\normB#1#2{\|#1\|_{\widetilde{L}_t^{\infty}(B^{#2}_{2,1})}}
\def\normBpo#1#2{\|#1\|_{L_t^1(B^{#2}_{p,1})}}
\def\normBo#1#2{\|#1\|_{L_t^1(B^{#2}_{2,1})}}
\numberwithin{equation}{section}
\let\pa=\partial
\let\al=\alpha
\let\e=\varepsilon
\let\lam=\lambda
\let\r=\rho
\let\s=\sigma
\let\f=\frac
\let\p=\psi
\let\D=\Delta
\let\Om=\Omega
\let\wt=\widetilde
\let\tri=\Delta
\let\ep=\epsilon
\def\cB{{\mathcal B}}
\def\cC{{\mathcal C}}
\def\cF{{\mathcal F}}
\def\cM{{\mathcal M}}
\def\cS{{\mathcal S}}
\def\virgp{\raise 2pt\hbox{,}}
\def\cdotpv{\raise 2pt\hbox{;}}
\def\eqdef{\buildrel\hbox{\footnotesize d\'ef}\over =}
\def\eqdefa{\buildrel\hbox{\footnotesize def}\over =}
\def\C{\mathop{\mathbb C\kern 0pt}\nolimits}
\def\DD{\mathop{\mathbb D\kern 0pt}\nolimits}
\def\EE{\mathop{{\mathbb E \kern 0pt}}\nolimits}
\def\K{\mathop{\mathbb K\kern 0pt}\nolimits}
\def\N{\mathop{\mathbb N\kern 0pt}\nolimits}
\def\Q{\mathop{\mathbb Q\kern 0pt}\nolimits}
\def\R{\mathop{\mathbb R\kern 0pt}\nolimits}
\def\SS{\mathop{\mathbb S\kern 0pt}\nolimits}
\def\ZZ{\mathop{\mathbb Z\kern 0pt}\nolimits}
\def\TT{\mathop{\mathbb T\kern 0pt}\nolimits}
\def\P{\mathop{\mathbb P\kern 0pt}\nolimits}
\newcommand{\la}{\lambda}
\newcommand{\Z}{{\ZZ}}
\def\dv{\mbox{div}}
\def\dive{\mathop{\rm div}\nolimits}
\def\Supp{\mathop{\rm Supp}\nolimits\ }
\def\no{\noindent}
\def\na{\nabla}
\def\p{\partial}
\newcommand{\beq}{\begin{equation}}
\newcommand{\eeq}{\end{equation}}
\newcommand{\ben}{\begin{eqnarray}}
\newcommand{\een}{\end{eqnarray}}
\newcommand{\andf}{\quad\hbox{and}\quad}
\newtheorem{defi}{Definition}[section]
\newtheorem{thm}{Theorem}[section]
\newtheorem{lem}{Lemma}[section]
\newtheorem{rmk}{Remark}[section]
\newtheorem{col}{Corollary}[section]
\newtheorem{prop}{Proposition}[section]
\renewcommand{\theequation}{\thesection.\arabic{equation}}
\begin{document}
\title[Global solutions of $2-$D inhomogeneous NS equations]
{ Global solutions  to 2-D inhomogeneous
 Navier-Stokes system with general velocity }
\author[J. Huang]{Jingchi Huang}\address[J. HUANG]
{Academy of Mathematics $\&$ Systems Science, Chinese Academy of
Sciences, Beijing 100190, P. R. CHINA} \email{jchuang@amss.ac.cn}
\author[M. PAICU]{Marius Paicu}
\address [M. PAICU]
{Universit\'e  Bordeaux 1\\
 Institut de Math\'ematiques de Bordeaux\\
F-33405 Talence Cedex, France}
\email{marius.paicu@math.u-bordeaux1.fr}
\author[P. ZHANG]{Ping Zhang}%
\address[P. ZHANG]
 {Academy of
Mathematics $\&$ Systems Science and  Hua Loo-Keng Key Laboratory of
Mathematics, The Chinese Academy of Sciences\\
Beijing 100190, CHINA } \email{zp@amss.ac.cn}
\date{9/9/2012}
\maketitle
 \begin{abstract} In this paper, we are concerned with the global
wellposedness of 2-D  density-dependent incompressible Navier-Stokes
equations \eqref{INS} with variable viscosity, in a critical
functional framework which is invariant by the  scaling of the
equations and under a non-linear smallness condition on fluctuation
of the initial density which has to be doubly exponential small
compared with the size of the initial velocity. In the second part
of the paper, we apply our methods combined with the techniques in
\cite{dm} to prove the global existence of solutions to \eqref{INS}
 with piecewise constant initial density which has small jump at
the interface and is away from vacuum. In particular, this latter
result removes the smallness condition for the initial velocity  in
a corresponding theorem of \cite{dm}.

\end{abstract}

\noindent {\sl Keywords:} Inhomogeneous  Navier-Stokes Equations,
Littlewood-Paley Theory, Wellposedness \

\vskip 0.2cm

\noindent {\sl AMS Subject Classification (2000):} 35Q30, 76D03  \

\renewcommand{\theequation}{\thesection.\arabic{equation}}
\setcounter{equation}{0}
\section{Introduction}
In this paper, we consider the global existence of solutions to the
following 2-D incompressible inhomogeneous  Navier-Stokes equations
with initial data in the scaling invariant Besov spaces and without
size restriction for the  initial velocity:
\begin{equation}
\label{INS}
 \left\{\begin{array}{l}
\displaystyle \pa_t \rho + u \cdot \na \rho=0,\qquad (t,x)\in\R^+\times\R^2,\\
\displaystyle \pa_t(\rho u) + \dive(\rho u \otimes u) -\mathrm{div}( \mu(\rho)\cM) +\na \Pi=0, \\
\displaystyle \mathrm{div} u = 0,
\end{array}\right.
\end{equation}
where $\rho, u=(u_1,u_2)$ stand for the density and  velocity of the
fluid respectively, $\mathcal{M} = \frac{1}{2}(\pa_{i}u_{j}+\pa_{j}
u_{i}),$ $\Pi$  is a scalar pressure function,
 and the
viscosity coefficient $\mu(\rho)$ is a smooth, positive function on
$[0,\infty).$ Such system describes a fluid which is obtained by
mixing two immiscible fluids that are incompressible and that have
different densities. It may also describe a fluid containing a
melted substance.

 There is a wide literatures  devoted to the mathematical study of
the incompressible Navier-Stokes equations in the homogeneous case
(where the density is a constant) or in the more physical case of
inhomogeneous fluids. In the homogeneous case, the celebrated
theorem of  Leray \cite{Leray} on  the existence of global weak
solutions with finite energy in any space dimension is now a
classical result. Moreover,  in the two dimensional space, it is
also classical that the Leray weak solution is in fact a global
strong solution. In dimension larger than two, the Fujita-Kato
theorem \cite{fk} allows to construct global strong solutions under
a smallness condition on the initial data comparing with the
viscosity of the fluid. To obtain those types of results in the
inhomogeneous case are the topics of many recent  works dedicated to
this system \cite{abi,  abipai, AGZ2, AGZ3, AKM, danchin, danchin2,
dm, dm2, desj, ger, guizhang, lions}... Our main goal in this paper
is to provide a global wellposedness result for the
density-dependent incompressible Navier-Stokes equations with
variable viscosity, in a critical functional framework which is
invariant by the  scaling of the equations and under a non-linear
smallness condition on fluctuation of the initial density which has
to be doubly  exponential small compared with the size of the
initial velocity. In the second part of the paper, we apply our
methods combined with the techniques in \cite{dm} to prove the
global existence of the solution to \eqref{INS} with piecewise
constant initial density, which is away from vacuum and has small
jumps at the interface. This latter problem is of a great interest
from physical point of view as it represents the case of a
immiscible mixture of fluids with different densities. We give in
this manner a partial response of a question raised by Lions
 \cite{lions} concerning the propagation of the regularity of the boundary to a "density-patches".

 We briefly describe in this paragraph some of the classical
results for the inhomogeneous Navier-Stokes system. When the viscous
coefficient equals some positive constant,  Lady\v zenskaja and
Solonnikov \cite{LS} first established the unique resolvability of
(\ref{INS}) in a bounded domain $\Om$ with homogeneous Dirichlet
boundary condition for $u;$ similar result was obtained by Danchin
\cite{danchin2} in $\R^d$ with initial data in the almost critical
Sobolev spaces; Simon \cite{Simon} proved  the global existence of
weak solutions. In general,  the global existence of weak solutions
with finite energy to \eqref{INS} with variable viscosity was proved
by Lions in \cite{lions} (see also the references therein, and the
monograph \cite{AKM}). Yet the regularity and uniqueness of such
weak solutions is a big open question in the field of mathematical
fluid mechanics, even in two space dimensions when the viscosity
depends on the density. Except under the assumptions:
\[ \r_0\in L^\infty({\Bbb T}^2),\quad
\inf_{c>0}\Bigl\|\frac{\mu(\rho_0)}{c}-1\Bigr\|_{L^\infty({\Bbb
T}^2)}\leq \epsilon,\quad \mbox{and}\quad u_0\in H^1({\Bbb T}^2),\]
Desjardins \cite{desj} proved that Lions weak solution $(\r, u)$
satisfies $u\in L^\infty((0,T); H^1({\Bbb T}^2))$ and $\rho\in
L^\infty((0,T)\times{\Bbb T}^2)$ for any $T<\infty.$  Moreover, with
additional assumption on the initial density, he could also prove
that $u\in L^2((0,\tau); H^2({\Bbb T}^2))$ for some short time
$\tau.$ To understand this problem further, the third author to this
paper proved the global wellposedness to a modified 2-D model
problem of \eqref{INS}, which coincides with the 2-D inhomogeneous
Navier-Stokes system with  constant viscosity, with general initial
data in \cite{Zhang}. Gui and Zhang \cite{guizhang} proved the
global wellposedness of \eqref{INS} with initial data satisfying
$\|\rho_0-1\|_{H^{s+1}}$ being sufficiently small and $u_0\in
H^s(\R^2)\cap \dot{H}^{-\e}(\R^2)$ for some $s>2$ and $0<\e<1.$
However, the exact size of $\|\rho_0-1\|_{H^{s+1}}$ was not given in
\cite{guizhang}.

Very recently,  Danchin and Mucha \cite{dm2} proved that: given
initial density $\r_0$ in  $ L^\infty(\Om)$ with a positive lower
bound and initial velocity $u_0\in H^2(\Om)$ for some bounded smooth
domain of $\R^d,$ the system \eqref{INS} with constant viscosity
 has a unique local solution. Furthermore, with the
initial density being close enough to some positive constant, for
any initial velocity in two space dimensions, and sufficiently small
velocity in three space dimensions, they also proved its global
wellposedness. We remark that the Lagrangian formulation for the
describing the flow plays a key role in the analysis in \cite{dm2}.
To prove the 2-D global result, they first applied energy method to
obtain $L^\infty(\R^+;H^1(\Om))$ estimate for the velocity field $u$
and $L^2(\R^+;L^2(\Om))$ estimate for $\p_tu.$ Then the authors
employed the classical maximal $L^p_T(L^q)$ estimate for the linear
Stokes operator to obtain the second order space derivative estimate
for the velocity. Notice that when  $\mu(\r)$ depends on $\r,$ and
the initial density is sufficiently close to some positive constant
in $L^\infty(\R^2),$ one can recover $L^\infty(\R^+;H^1(\R^2))$
estimate for the velocity $u$ and $L^2(\R^+;L^2(\R^2 ))$ estimate
for $\p_tu$  by using Desjardins' techniques from \cite{desj}. Yet
we do not know then how to recover the second order space
derivatives of the velocity. Therefore, I think it is a very
challenging problem to prove Danchin and Mucha \cite{dm2} type
results for \eqref{INS} with variable viscosity.

When the density $\rho$ is away from zero, we denote by $a \eqdefa
\f{1}{\rho}-1$ and $\wt{\mu}(a)\eqdefa \mu(\f1{1+a})$, then the
system \eqref{INS} can be equivalently reformulated as
\begin{equation}
\label{INS1}
 \left\{\begin{array}{l}
\displaystyle \pa_t a + u \cdot \na a=0,\qquad (t,x)\in\R^+\times\R^2,\\
\displaystyle \pa_t u + u \cdot \na u +(1+a)(\na \Pi-\mathrm{div}(\wt{\mu}(a)\cM)=0, \\
\displaystyle \mathrm{div} u = 0.\\
\end{array}\right.
\end{equation}
 Notice that just as the
classical Navier-Stokes system (which corresponds to $a=0$ in
\eqref{INS1}), the inhomogeneous Navier-Stokes system (\ref{INS1})
also has a scaling. In fact, if $(a, u)$ solves (\ref{INS1}) with
initial data $(a_0, u_0)$, then for any $ \ell>0$,
\begin{equation}\label{1.2}
(a, u)_{\ell} \eqdefa (a(\ell^2\cdot, \ell\cdot), \ell u(\ell^2
\cdot, \ell\cdot))\quad\mbox{and}\quad (a_0,u_0)_\ell\eqdefa
(a_0(\ell\cdot),\ell u_0(\ell\cdot))
\end{equation}
 $(a, u)_{\ell}$ is also a solution of (\ref{INS1}) with initial data $(a_0,u_0)_\ell$.

It is easy to check that the norm of
$B_{p,1}^{\frac{d}p}(\R^d)\times B_{p,1}^{-1+\frac{d}p}(\R^d)$ is
scaling invariant under the scaling transformation $(a_0,u_0)_\ell$
given by \eqref{1.2}. In \cite{abi}, Abidi proved in general space
dimension $d$ that: if $1<p<2d,$ $ 0<\underline{\mu}<\mu(\r),$ given
$a_0\in B_{p,1}^{\frac{d}p}(\R^d)$ and  $u_0\in
B_{p,1}^{-1+\frac{d}p}(\R^d),$ (\ref{INS1}) has a global solution
provided that
$\|a_0\|_{B_{p,1}^{\frac{d}p}}+\|u_0\|_{B_{p,1}^{-1+\frac{d}p}}\leq
c_0$ for some sufficiently small $c_0$. Moreover, this solution is
unique if $1<p\leq d.$  This result generalized the corresponding
results in \cite{danchin, danchin2} and was improved by Abidi and
Paicu in \cite{abipai} with  $a_0\in B_{q,1}^{\frac{d}q}(\R^d)$ and
$u_0\in B_{p,1}^{-1+\frac{d}p}(\R^d)$ for $p,q$ satisfying some
technical assumptions.  Abidi, Gui and Zhang  removed the smallness
condition for $a_0$ in \cite{AGZ2, AGZ3}. Notice that  the main
feature of the density space is to be a multiplier on the velocity
space and this allows to define the nonlinear terms in the system
\eqref{INS1}. Recently, Danchin and Mucha \cite{dm} proved a more
general wellposedness result of \eqref{INS} with $\mu(\r)=\mu>0$ by
considering very rough densities in some multiplier spaces on the
Besov spaces $B_{p,1}^{-1+\frac{d}p}(\R^d)$ for $1<p<2d,$ which in
particular completes the uniqueness result  in \cite{abi} for $p\in
(d,2d)$ in the case when $\mu(\r)=\mu>0.$

On the other hand, motivated by \cite{GZ2, PZ1, Zhangt} concerning
the global wellposedness of 3-D incompressible anisotropic
Navier-Stokes system with the third component of the initial
velocity field being large, we \cite{PZ2} proved that:  given
$a_0\in B^{\f3q}_{q,1}(\R^3)$ and $u_0=(u_0^h,u_0^3)\in
B^{-1+\frac3p}_{p,1}(\R^3)$ for $1<q\leq p<6$ and
$\frac1q-\frac1p\leq \frac13,$ \eqref{INS1} with $\wt{\mu}(a)=\mu>0$
has a unique global solution  as long as
$$\bigl(\mu\|a_0\|_{B_{q,1}^{\f3q}}+\|u_0^h\|_{B^{-1+\frac3p}_{p,1}}\bigr)\exp\Bigl\{
C_0\|u_0^3\|_{B^{-1+\frac3p}_{p,1}}^2\ \Big/\mu^2\Bigr\}\leq
c_0\mu$$ for some  sufficiently small $c_0$. We emphasize that our
proof in \cite{PZ2} used in a fundamental way the algebraical
structure of \eqref{INS1}, namely $\dive u=0.$

The first object of this paper is to improve the global
wellposedness  result in \cite{guizhang} so that given initial data
in the scaling invariant Besov spaces, for any initial velocity,
\eqref{INS1} has a global solution provided that the fluctuation of
the initial density is sufficiently small, furthermore, its explicit
dependence on the initial velocity  will be given here.

\begin{thm}\label{mainthm}
{\sl Let $1 < q \leq p <4$, and $\f1q -\f1p \leq \f12$. Let $a_0 \in
B^{\f2q}_{q,1}(\mathbb{R}^2)$ and $u_0 \in
B^{-1+\f2p}_{p,1}(\mathbb{R}^2)$ be a solenoidal vector field. Then
there exist positive constants $c_0$ and $C_0,$ which depend on
$\|\wt{\mu}'\|_{L^\infty(-1,1)},$ such that if
\begin{equation}\label{thmassume}
\eta\eqdefa
\normbq{a_0}{\f2q}\exp\Bigl\{C_0\bigl(1+\wt{\mu}^2(0)\bigr)\exp\bigl(\f{C_0}{\wt{\mu}^2(0)}\normbp{u_0}{-1+\f2p}^2\bigr)\Bigr\}
\leq \f{c_0\wt{\mu}(0)}{1+\wt{\mu}(0)},
\end{equation}
\eqref{INS1} has a global solution $a\in
\cC([0,\infty);B^{\f2q}_{q,1}(\R^2))\cap
\wt{L}^\infty(\mathbb{R}^+;B^{\f2q}_{q,1}(\R^2))$ and $u \in
\cC([0,\infty);B^{-1+\f2p}_{p,1}(\R^2))\cap
\wt{L}^\infty(\mathbb{R}^+; {B}^{-1+\f2p}_{p,1}(\R^2))\cap
L^1(\mathbb{R}^+;B^{1+\f2p}_{p,1}(\R^2))$. If $\f1p+\f1q\geq 1,$
this solution is unique. }
\end{thm}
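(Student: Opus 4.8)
The plan is to construct the global solution by a continuation argument built on a priori estimates in the Chemin–Lerner spaces appearing in the statement. First I would regularize the initial data $(a_0,u_0)$ into smooth, compactly supported data $(a_0^n,u_0^n)$ and invoke the local wellposedness theory (as in \cite{abi, AGZ2, abipai, PZ2}) to obtain, on a maximal interval $[0,T^*_n)$, a smooth solution $(a^n,u^n)$ with $a^n\in\cC([0,T);B^{\f2q}_{q,1})\cap\wt L^\infty_T(B^{\f2q}_{q,1})$ and $u^n\in\cC([0,T);B^{-1+\f2p}_{p,1})\cap\wt L^\infty_T(B^{-1+\f2p}_{p,1})\cap L^1_T(B^{1+\f2p}_{p,1})$. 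Because the density equation is a pure transport equation with divergence-free velocity, the norm $\normBq{a^n}{\f2q}$ is controlled by $\normbq{a_0}{\f2q}$ times $\exp(C\normBpo{\nabla u^n}{\f2p})$, so the whole game is to bound $\int_0^T\normbp{\nabla u^n}{\f2p}\,dt$ in terms of the data uniformly in $n$ and in $T<T^*_n$, and then to show this bound forces $T^*_n=\infty$.

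The central mechanism for handling "general velocity" in two dimensions is to propagate a control on the $L^2$-based energy that is \emph{global in time}: multiplying the momentum equation by $u$ and using $\dive u=0$ together with the lower bound $1+a\ge$ const (preserved by transport), one gets $\normsup{u(t)}^2_{L^2}+\int_0^t\|\nabla u\|_{L^2}^2$ controlled by $\|u_0\|_{L^2}^2$; in 2D this energy is scaling-critical, which is exactly why no smallness on $u_0$ is needed. The key new ingredient, following the philosophy of \cite{Zhang, guizhang, PZ2}, is to split $u=u_F+\bar u$ where $u_F$ solves the free (constant-coefficient, viscosity $\wt\mu(0)$) Stokes system with data $u_0$, for which one has linear estimates $\normBp{u_F}{-1+\f2p}+\wt\mu(0)\normBpo{u_F}{1+\f2p}\lesssim\normbp{u_0}{-1+\f2p}$, and to derive an equation for the remainder $\bar u$ whose forcing is \emph{at least linear in $a$} (it collects the terms $a\,\dive(\wt\mu(a)\cM)$, $a\nabla\Pi$, and the commutator $\dive((\wt\mu(a)-\wt\mu(0))\cM)$) together with the quadratic convection term $u\cdot\nabla u$. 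Estimating $\bar u$ in $\wt L^\infty_T(B^{-1+\f2p}_{p,1})\cap L^1_T(B^{1+\f2p}_{p,1})$ via the parabolic smoothing of the Stokes operator, controlling the pressure through the elliptic equation $\dive((1+a)\nabla\Pi)=\dive((1+a)\dive(\wt\mu(a)\cM)-u\cdot\nabla u)$ in the multiplier framework of \cite{dm}, and feeding back the $L^2$ energy bound to absorb the quadratic terms, one arrives at a closed nonlinear inequality of the schematic form
\[
X(T)\le C\,\normbp{u_0}{-1+\f2p}+C\,\normbq{a_0}{\f2q}\exp\bigl(C X(T)\bigr)\exp\bigl(C\,\mathcal E_0\bigr),
\]
where $X(T)=\normBp{\bar u}{-1+\f2p}+\wt\mu(0)\normBpo{u}{1+\f2p}$ and $\mathcal E_0$ is a (scaling-invariant, hence at worst exponential in $\|u_0\|$) quantity coming from the $L^2$ energy and the $\dH^{?}$-type estimates. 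A bootstrap/continuity argument then shows that under the doubly-exponential smallness \eqref{thmassume} on $\normbq{a_0}{\f2q}$, $X(T)$ stays bounded by a fixed constant for all $T$, which simultaneously gives $T^*_n=\infty$ and uniform-in-$n$ bounds; passing to the limit (compactness for $u$, and for $a$ using the transport structure) yields the global solution in the stated spaces.

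The main obstacle, as in \cite{guizhang}, is that the momentum equation has the factor $(1+a)$ in front of $\nabla\Pi-\dive(\wt\mu(a)\cM)$, so one cannot directly use the linear Stokes estimates: every estimate must be run in the space $B^{\f2q}_{q,1}$ for $a$ acting as a \emph{multiplier} on $B^{-1+\f2p}_{p,1}$, which is precisely why the constraints $1<q\le p<4$, $\f1q-\f1p\le\f12$ are imposed, and the product/commutator estimates (Bony decomposition, together with the product laws $B^{\f2q}_{q,1}\cdot B^{s}_{p,1}\hookrightarrow B^{s}_{p,1}$ for the admissible range of $s$) are what make the argument technical. A secondary difficulty is that the remainder forcing still contains a genuinely nonlinear-in-$a$ piece $\wt\mu(a)-\wt\mu(0)$, handled by $|\wt\mu(a)-\wt\mu(0)|\le\|\wt\mu'\|_{L^\infty(-1,1)}|a|$ (hence the dependence of $c_0,C_0$ on $\|\wt\mu'\|_{L^\infty(-1,1)}$) plus composition estimates in Besov spaces; and one must be careful that the lower bound $\inf(1+a)>0$, needed for the elliptic pressure estimate, is preserved, which follows from $\normsup{a(t)}\le\normsup{a_0}$ for the transport equation and the smallness of $\normbq{a_0}{\f2q}\hookrightarrow L^\infty$. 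Finally, uniqueness when $\f1p+\f1q\ge 1$ is proved separately by estimating the difference of two solutions in a space of one lower regularity, $B^{\f2q-1}_{q,1}\times B^{-2+\f2p+\f2q}_{p,1}$ or similar, where the condition $\f1p+\f1q\ge1$ guarantees the requisite product estimates close; this is the standard Danchin-type uniqueness argument and I would carry it out last.
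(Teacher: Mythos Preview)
Your proposal has a genuine gap at its core: you invoke the $L^2$ energy identity for $u$, writing that $\|u(t)\|_{L^2}^2+\int_0^t\|\nabla u\|_{L^2}^2$ is controlled by $\|u_0\|_{L^2}^2$. But $u_0\in B^{-1+\f2p}_{p,1}(\R^2)$ with $p\in(1,4)$ is in general \emph{not} in $L^2$; the solutions here are of infinite energy (the paper stresses this explicitly). So the step ``feeding back the $L^2$ energy bound to absorb the quadratic terms'' has nothing to feed back, and the schematic inequality you wrote cannot be reached this way.

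The paper repairs this with a \emph{two-level} decomposition rather than your single split $u=u_F+\bar u$. First $u=w+v$, where $w$ solves the full \emph{nonlinear} 2-D Navier--Stokes system with data $u_0$ and constant viscosity $\wt\mu(0)$ (not the linear Stokes system); this absorbs the large quadratic convection once and for all, independently of $a$. Then $w$ itself is split as $w=w_L+\bar w$ with $w_L=e^{\mu t\Delta}u_0$, and it is $\bar w$ (which starts from zero and is forced by $w_L\cdot\nabla w_L\in L^1_t(L^2)$) that carries the finite $L^2$ energy and can be estimated in $B^0_{2,1}$; this is precisely how one builds global infinite-energy solutions to 2-D Navier--Stokes for $p\in(1,4)$. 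The remainder $v$ then solves a system with zero initial velocity and forcing genuinely linear in $a$, so $v$ stays small when $a_0$ is small. Closing the $(a,v)$ estimates requires a further device you do not mention: weighted-in-time norms of the form $g_\lambda=g\exp\{-\lambda\int_0^t f\}$ with $f(t)=\|w(t)\|_{B^{1+\f2p}_{p,1}}+\mu^{-1}\|\nabla p(t)\|_{B^{-1+\f2p}_{p,1}}$ and $f_2(t)=\|w(t)\|_{B^{\f2p}_{p,1}}^2$, which let one absorb the $w$-dependent transport and cross terms and produce the double exponential in \eqref{thmassume}. Your one-level split $u=u_F+\bar u$ conflates the large-but-$a$-independent nonlinear dynamics with the small-but-$a$-coupled perturbation, and without a valid energy bound on $u$ you have no mechanism to control the resulting quadratic terms.
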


\begin{rmk}
\begin{itemize}
\item The definitions of the functional spaces will be presented in
Subsection \ref{subsect2.1}.

\item We remark that compared with the finite energy solutions constructed in \cite{dm2},
 our solution here is not of finite energy and belongs to the critical spaces related to \eqref{INS1}. While for the  classical
2-D Navier-Stokes system,  large infinite energy solution was proved
by Gallagher and Planchon  \cite{gallplan} and Germain \cite{ger}.
\end{itemize}
\end{rmk}

It turns out that we can apply the main idea  to prove Theorem
\ref{mainthm} combined with the techniques in \cite{dm} to remove
the smallness condition for initial velocity in \cite{dm} when the
space dimension equals to $2.$ Toward this,  we first recall the
definition of multiplier spaces to Besov spaces from \cite{MS}:

\begin{defi}\label{deffm}
The multiplier space $\cM(B^{s}_{p,1}(\R^d))$ of $B^{s}_{p,1}(\R^d)$
is  the set of  distributions $f$ such that $ f \psi\in
B^{s}_{p,1}(\R^d)$ whenever $\psi\in B^{s}_{p,1}(\R^d)$. We endow
this space with the norm \beno \|f\|_{\cM(B^{s}_{p,1})}\eqdefa
\sup_{\psi\in B^{s}_{p,1}(\R^d):\ \|\psi\|_{B^{s}_{p,1}}\leq
1}\|\psi f\|_{B^{s}_{p,1}}.\eeno
\end{defi}

In \cite{dm},  Danchin and Mucha proved the following global
wellposedness for \eqref{INS} with constant viscosity:

\begin{thm}\label{thdm1}
(Theorem  1 and Theorem 3 of \cite{dm}) {\sl Let $p\in [1,2d)$ and
$u_0$ be a divergence-free vector field in
$B^{-1+\f{d}{p}}_{p,1}(\R^d).$ Assume that the initial density
$\r_0$ belongs to the multiplier space
$\cM(B^{-1+\f{d}p}_{p,1}(\R^d)).$ There exists a constant $c$
depending only on $d$ such that if
$$
\|\r_0-1\|_{\cM(B^{-1+\f{d}p}_{p,1})}+\mu^{-1}\|u_0\|_{B^{-1+\f{d}{p}}_{p,1}}\leq
c,
$$ system \eqref{INS} with $\mu(\r)=\mu>0$ has a unique global
solution $(\r, u)$ with $\r\in
L^\infty(\R^+;\cM(B^{-1+\f{d}p}_{p,1}(\R^d)))$ and $u \in
\cC([0,\infty); B^{-1+\f{d}{p}}_{p,1}(\R^d))$ $ \cap L^1(\R^+;
B^{1+\f{d}{p}}_{p,1}(\R^d)).$   }
\end{thm}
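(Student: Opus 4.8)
The natural route --- the one followed in \cite{dm} --- is to work in Lagrangian coordinates. Since $\rho$ is merely transported by $u$, the change of variables $v(t,y)=u(t,X(t,y))$ with $X(t,y)=y+\int_0^t v(\tau,y)\,d\tau$ freezes the density to the \emph{time-independent} profile $\rho_0$ written in the Lagrangian label, and the momentum equation of \eqref{INS} with $\mu(\rho)=\mu$ collapses to a Stokes-type system of the schematic form $\rho_0\,\partial_t v-\mu\,\mathrm{div}\bigl(A\,{}^{t}\!A\,\nabla v\bigr)+{}^{t}\!A\,\nabla\Pi=0$, $\mathrm{div}(Av)=0$, where $A=\bigl(I+\int_0^t\nabla v\,d\tau\bigr)^{-1}$; the transport equation itself disappears, and one is left with the pair $(v,\Pi)$ alone and a fixed, small multiplier perturbation $\rho_0-1$ of the constant $1$.

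First I would build the linear theory: maximal $L^1$-in-time regularity for the variable-density Stokes system $\rho_0\,\partial_t v-\mu\Delta v+\nabla\Pi=f$, $\mathrm{div}\,v=0$, in the Chemin--Lerner scale $\widetilde L^\infty_t\bigl(B^{-1+\f dp}_{p,1}\bigr)\cap L^1_t\bigl(B^{1+\f dp}_{p,1}\bigr)$ with $\partial_t v,\ \nabla\Pi\in L^1_t\bigl(B^{-1+\f dp}_{p,1}\bigr)$. The mechanism is to split $\rho_0\,\partial_t v=\partial_t v+(\rho_0-1)\partial_t v$ and move $(\rho_0-1)\partial_t v$ to the right-hand side: by Definition~\ref{deffm} its $L^1_t(B^{-1+\f dp}_{p,1})$-norm is bounded by $\|\rho_0-1\|_{\cM(B^{-1+\f dp}_{p,1})}\,\|\partial_t v\|_{L^1_t(B^{-1+\f dp}_{p,1})}$, which is reabsorbed once $c$ is small, so what remains is constant-coefficient Stokes (heat) smoothing handled by the usual dyadic and commutator estimates. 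This is exactly why the \emph{multiplier} class is the right one for $\rho_0$: it is the weakest hypothesis under which the weighted term $(\rho_0-1)\partial_t v$ can be reabsorbed and, correspondingly, the pressure $\nabla\Pi$ controlled, despite $\rho_0$ spoiling the exact divergence-free structure.

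Then I would close the full system by a contraction argument on $[0,+\infty)$, on a ball of radius $R\sim\mu^{-1}\|u_0\|_{B^{-1+\f dp}_{p,1}}$ inside the affine set $\{\mathrm{div}(Av)=0\}$. The point is that every Lagrangian correction --- $(A\,{}^{t}\!A-I)\nabla v$, $({}^{t}\!A-I)\nabla\Pi$, $\mathrm{div}((A-I)v)$ --- is at least quadratic in $v$, since $A-I=-\int_0^t\nabla v\,d\tau+O\bigl((\int_0^t\nabla v\,d\tau)^2\bigr)$ and $\int_0^t\nabla v$ is small in $L^1_t(B^{\f dp}_{p,1})\hookrightarrow L^1_t(L^\infty)$. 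Because $B^{\f dp}_{p,1}$ is a Banach algebra and $B^{-1+\f dp}_{p,1}$ a module over it (this is where the restriction $p<2d$ enters), the Besov product laws make both the nonlinear map and its Lipschitz constant small on the ball, giving a unique fixed point. Uniqueness in the whole class then follows from the corresponding stability estimate for the difference of two Lagrangian solutions --- the real dividend of the Lagrangian picture, as the troublesome $u\cdot\nabla u$ has been straightened out and the density coefficient is shared by both solutions; for $p\in(d,2d)$ one is forced to measure the difference one derivative below, but the product laws still close it. Finally, since $\nabla v\in L^1_t(L^\infty)$ with small norm, $X(t,\cdot)$ is a measure-preserving bi-Lipschitz homeomorphism of $\R^d$, so $u=v\circ X^{-1}$ and $\rho=\rho_0\circ X^{-1}$ inherit the regularity asserted in the theorem and solve \eqref{INS}.

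I expect the main obstacle to be precisely the linear variable-density Stokes estimate at the critical regularity, \emph{uniformly} in time --- in particular, bounding $\nabla\Pi$, that is, quantifying the failure of the Leray projection to be exact for the weighted operator, in $L^1_t(B^{-1+\f dp}_{p,1})$ by a constant that does not blow up as $t\to\infty$. Once that quantitative core is secured, the flow-map bounds, the reabsorption of the quadratic nonlinearities, and the passage back and forth between Lagrangian and Eulerian coordinates are routine given the product and commutator calculus for $B^{s}_{p,1}$.
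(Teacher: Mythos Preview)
Your sketch is a faithful outline of the Lagrangian strategy of Danchin--Mucha \cite{dm}, and there is no mathematical gap in the plan you describe. However, you should be aware that the present paper does \emph{not} prove Theorem~\ref{thdm1} at all: it is stated purely as a quotation of Theorems~1 and~3 of \cite{dm}, to set the context for Theorem~\ref{mainthm1}. There is therefore no ``paper's own proof'' to compare against; the authors simply cite the result and then go on to prove their own Theorem~\ref{mainthm1}, which removes the smallness assumption on $u_0$ in dimension~$2$.

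It is worth noting, for contrast, that when the authors do prove Theorem~\ref{mainthm1} in Section~5 they deliberately \emph{avoid} the Lagrangian formulation you outline (see the remark ``As we shall not use Lagrange approach in \cite{dm}'' at the start of Subsection~5.1). Instead they stay in Eulerian coordinates, split $u=w+v$ with $w$ solving the homogeneous 2-D Navier--Stokes system, and control the multiplier norm of $a$ and $\widetilde\mu(a)-\mu$ along the flow via Proposition~\ref{aprop1} (which rests on the composition estimate of Lemma~\ref{actionest}). The Lagrangian route of \cite{dm} buys a clean contraction for rough densities but forces smallness on $u_0$; the Eulerian perturbative route of this paper trades that for a weighted-in-time bootstrap (Definition~\ref{defpz}) that accommodates large $u_0$ at the cost of keeping the density fluctuation doubly-exponentially small. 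So your proposal is the right proof of the cited theorem, just not one you will find in this paper.
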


Motivated by \cite{dm} and the proof of Theorem \ref{mainthm}, here
we consider similar global wellposedness  of \eqref{INS1}, which
does not require any smallness assumption for $u_0.$

\begin{thm}\label{mainthm1}
{\sl Let $p\in (2,4),$ $a_0\in \cM(B^{-1+\f2p}_{p,1}(\R^2))$ with
$\wt{\mu}(a_0)\in \cM(B^{\f2p}_{p,1}(\R^2)),$ and $u_0 \in
B^{-1+\f2p}_{p,1}(\mathbb{R}^2).$  Then there exist positive
constants $c_0$ and $C_0$ such that if
\begin{equation}\label{thmassumea}
\begin{split}
(\mu\|a_0\|_{\cM(B^{-1+\f2p}_{p,1})}&+\|\wt{\mu}(a_0)-\wt{\mu}(0)\|_{\cM(B^{\f2p}_{p,1})})\\
&\times\exp\Bigl\{C_0\bigl(1+\wt{\mu}^2(0)\bigr)\exp\bigl(\f{C_0}{\wt{\mu}^2(0)}\normbp{u_0}{-1+\f2p}^2\bigr)\Bigr\}
\leq c_0\wt{\mu}(0),
\end{split}
\end{equation}
\eqref{INS1} has a unique global solution $(a, u)$ with $a\in
L^\infty(\R^+; \cM(B^{-1+\f2p}_{p,1}(\R^2))$ and
 $u \in
\cC([0,\infty); B^{-1+\f{2}{p}}_{p,1}(\R^2))$ $ \cap L^1(\R^+;
B^{1+\f{2}{p}}_{p,1}(\R^2)).$ }
\end{thm}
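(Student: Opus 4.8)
The plan is to follow the same scheme used to prove Theorem \ref{mainthm}, replacing throughout the Besov estimates on the density fluctuation by multiplier--space estimates in the spirit of \cite{dm}. Set $\bar\mu\eqdefa\wt\mu(0)$ and rewrite the momentum equation in \eqref{INS1} as the forced Stokes system
\[
\pa_t u-\bar\mu\Delta u+\na\Pi=-u\cdot\na u+a\,\dive(\wt\mu(a)\cM)+\dive\bigl((\wt\mu(a)-\bar\mu)\cM\bigr)-a\na\Pi\eqdefa F,\qquad \dive u=0 .
\]
First I would run the same approximation scheme as for Theorem \ref{mainthm}, producing smooth solutions $(a_n,u_n)$ for which every manipulation below is legitimate, derive bounds uniform in $n$, and pass to the limit.

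The a priori estimates are organized in three layers. (i) Basic energy estimate: since $\wt\mu(a)$ also satisfies the transport equation $\pa_t(\wt\mu(a))+u\cdot\na(\wt\mu(a))=0$, and since we are in two dimensions,
\[
\f12\f{d}{dt}\int\rho|u|^2\,dx+\bar\mu\|\na u\|_{L^2}^2\lesssim\|\wt\mu(a)-\bar\mu\|_{L^\infty}\|\na u\|_{L^2}^2 ,
\]
so if $\|\wt\mu(a_0)-\bar\mu\|_{L^\infty}$ is small one gets the global bound $\|u\|_{L^\infty_t(L^2)}+\sqrt{\bar\mu}\,\|\na u\|_{L^2_t(L^2)}\lesssim\|u_0\|_{L^2}$. (ii) Following Desjardins' argument \cite{desj} one then controls $\|\na u\|_{L^\infty_t(L^2)}$ and $\|\sqrt\rho\,\pa_tu\|_{L^2_t(L^2)}$ by a first exponential of $\|u_0\|_{L^2}^2/\bar\mu^2$, using the 2-D logarithmic interpolation inequality; this is the source of the inner exponential in \eqref{thmassumea}. (iii) Finally, applying the maximal-regularity estimate for the Stokes operator one bounds $\|u\|_{\wt L^\infty_t(B^{-1+\f2p}_{p,1})}+\|u\|_{L^1_t(B^{1+\f2p}_{p,1})}$ by $\|u_0\|_{B^{-1+\f2p}_{p,1}}$ plus $\|F\|_{L^1_t(B^{-1+\f2p}_{p,1})}$, and the latter is where the multiplier framework enters: $a$ is a multiplier on $B^{-1+\f2p}_{p,1}(\R^2)$ and $\wt\mu(a)-\bar\mu$ a multiplier on $B^{\f2p}_{p,1}(\R^2)$, so the three density-dependent pieces of $F$ carry the small prefactors appearing on the left of \eqref{thmassumea}, while $u\cdot\na u$ is handled by paraproduct estimates together with the energy bounds of (ii) (here $p\in(2,4)$ ensures that $B^{\f2p}_{p,1}(\R^2)$ behaves as an algebra and $L^1_t(B^{1+\f2p}_{p,1})\hookrightarrow L^1_t(W^{1,\infty})$). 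The pressure is recovered by solving $\dive((1+a)\na\Pi)=\dive F_0$ and inverting an operator of the form $\Id+\cR_a$ with $\cR_a$ of small norm, thanks again to the smallness of $\|a\|_{\cM(B^{-1+\f2p}_{p,1})}$.

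Next I would propagate the density norms. Since $\pa_t a+u\cdot\na a=0$ and $\pa_t(\wt\mu(a))+u\cdot\na(\wt\mu(a))=0$, passing to Lagrangian coordinates $X_t$ associated with $u$ gives $a(t)=a_0\circ X_t^{-1}$ and $\wt\mu(a(t))=\wt\mu(a_0)\circ X_t^{-1}$, and the composition estimates in multiplier spaces from \cite{dm} yield
\[
\|a(t)\|_{\cM(B^{-1+\f2p}_{p,1})}+\|\wt\mu(a(t))-\bar\mu\|_{\cM(B^{\f2p}_{p,1})}\le C\bigl(\|a_0\|_{\cM(B^{-1+\f2p}_{p,1})}+\|\wt\mu(a_0)-\bar\mu\|_{\cM(B^{\f2p}_{p,1})}\bigr)
\]
as long as $\|\na u\|_{L^1_t(L^\infty)}$, controlled by step (iii), stays bounded. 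One closes the whole thing by a continuation argument: let $T^\star$ be the supremum of times on which the bootstrap bounds hold with doubled constants, assume $T^\star<\infty$, and use \eqref{thmassumea} to strictly improve all bounds on $[0,T^\star]$, whence $T^\star=\infty$; this also gives the stated time continuity and integrability of $(a,u)$. Uniqueness for $p\in(2,4)$, which lies outside the range $\f1p+\f1q\ge1$ of Theorem \ref{mainthm}, is obtained as in \cite{dm}: write the difference of two solutions in Lagrangian variables where the density coefficient is frozen equal to $a_0$, and run a Gronwall estimate in $B^{-1+\f2p}_{p,1}(\R^2)$ controlled by $\|a_0\|_{\cM(B^{-1+\f2p}_{p,1})}$.

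The step I expect to be the main obstacle is closing layer (iii) together with the density propagation with a constant that is only doubly exponential in $\|u_0\|_{B^{-1+\f2p}_{p,1}}$: this forces a careful splitting of $F$ so that each genuinely $u$-nonlinear contribution is paired either with the small multiplier norm of the density (hence absorbed) or with the a priori energy bounds of (i)--(ii) (hence only feeding the exponential factors), while the multiplier framework interacts far less comfortably with the $L^2$-based energy estimates than the Besov framework of Theorem \ref{mainthm} does; within this, treating the pressure term $a\na\Pi$ at such critical, low regularity is the technical heart of the argument, and the self-improving loop ``$\|a_0\|_{\cM}$ small $\Rightarrow$ $\|\na u\|_{L^1_t(L^\infty)}$ bounded $\Rightarrow$ $\|a(t)\|_{\cM}$ small'' must be made quantitative exactly with the double exponential of \eqref{thmassumea}.
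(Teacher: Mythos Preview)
Your proposal has a genuine gap: steps (i) and (ii) (the $L^2$ energy estimate and Desjardins' $H^1$ bound) require $u_0\in L^2(\R^2)$, but the hypothesis is only $u_0\in B^{-1+\f2p}_{p,1}(\R^2)$ with $p\in(2,4)$, so $-1+\f2p\in(-\f12,0)$ and $u_0$ is \emph{not} of finite energy in general. Without (i)--(ii) you cannot close step (iii), since the only way to control $u\cdot\na u$ in $L^1_t(B^{-1+\f2p}_{p,1})$ at critical regularity without any auxiliary $L^2$-type information is to assume $\|u_0\|_{B^{-1+\f2p}_{p,1}}$ small, which is precisely what the theorem avoids.

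The paper circumvents this by the splitting $u=w+v$ already announced in the introduction: $w$ solves the \emph{classical} 2-D Navier--Stokes system \eqref{weq} with data $u_0$, and it is $w$ (not $u$) for which one extracts the infinite-energy part. Concretely $w=w_L+\bar w$ with $w_L=e^{\mu t\Delta}u_0$ and $\bar w$ having zero initial data; one checks that $\bar w$ lands in $L^2$-based spaces (Proposition \ref{prop3.1}), and this is where the $L^2$ energy/Gronwall argument is run, producing the inner exponential in \eqref{thmassumea} via \eqref{west}. The perturbation $v$ then starts from $0$, carries all the density interaction, and is estimated in the weighted Chemin--Lerner spaces $\|\cdot\|_{L^1_{t,f_i}(B^{-1+\f2p}_{p,1})}$ of Definition \ref{defpz} exactly as in Section 4, with the Besov density norms replaced by multiplier norms (Propositions \ref{aprop1} and \ref{pressprop1}); the outer exponential comes from undoing the weights. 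Your treatment of the transport equation in multiplier spaces and of uniqueness via the Lagrangian formulation is in line with the paper, but it must sit on top of the $w+v$ decomposition rather than on direct energy bounds for $u$.
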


Notice from \cite{dm} that: let $\Omega_0$ be a bounded $C^1$ domain
of $\R^2$ and $\rho_0 =1 +\sigma \chi_{\Omega_0}$ for some
sufficiently  small constant $\s$, $a_0=\f1{\r_0}-1=
-\f{\sigma}{1+\sigma}\chi_{\Omega_0}$  and
$\wt{\mu}(a_0)-\wt{\mu}(0)=(\mu(1+\sigma)-\mu(1))\chi_{\Omega_0}$
belong to $\cM(B^{-1+\f2p}_{p,1}(\R^2))$ for  $2\leq p <4$ and their
$\cM(B^{-1+\f2p}_{p,1}(\R^2))$ norm are small as long as $|\sigma|$
is small. This together with Theorem \ref{mainthm1} implies that

\begin{col}\label{col1.1}
{\sl Let $p\in (2,4)$ and $u_0 \in B^{-1+\f2p}_{p,1}(\mathbb{R}^2)$
be a solenoidal vector field. Let $\Omega_0$ be a bounded $C^1$
domain of $\R^2$ and $\rho_0 =1 +\sigma \chi_{\Omega_0}$ for some
small enough constant $\sigma$ (compared to $\|u_0\|_{
B^{-1+\f2p}_{p,1}}$).
 Then
\eqref{INS} has a unique global solution $(\rho, u)$ with $u \in
\cC([0,\infty); B^{-1+\f{2}{p}}_{p,1}(\R^2)) \cap
L^1(\R^+;B^{1+\f{2}{p}}_{p,1}(\R^2))$ and $$
\rho(t)=1+\sigma\chi_{\Omega_t} \quad \mbox{for}\quad \Omega_t =
X_u(t, \Omega_0),
$$
where $X_u(t,y)$ is determined by \beq\label{flow}
X_u(t,y)=y+\int_0^t u(\tau, X_u(\tau,y))d\tau. \eeq Besides, the
measure of $\Om_t$ and the $C^1$ regularity of $\pa\Omega_t$ are
preserved for all time.}
\end{col}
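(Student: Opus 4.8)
The plan is to deduce Corollary~\ref{col1.1} from Theorem~\ref{mainthm1}, combined with the Lagrangian description of the density and the patch-regularity arguments of \cite{dm}. First I would check that the data $\rho_0=1+\sigma\chi_{\Omega_0}$ meet the hypotheses of Theorem~\ref{mainthm1}. Since $\Omega_0$ is a bounded $C^1$ domain and $p\in(2,4)$, the multiplier properties of characteristic functions of $C^1$ domains recalled above from \cite{dm} give $a_0=-\f{\sigma}{1+\sigma}\chi_{\Omega_0}\in\cM(B^{-1+\f2p}_{p,1}(\R^2))$ and $\wt\mu(a_0)-\wt\mu(0)=(\mu(1+\sigma)-\mu(1))\chi_{\Omega_0}\in\cM(B^{\f2p}_{p,1}(\R^2))$, with both norms bounded by $C(\Omega_0,\mu)\,|\sigma|$ because $\mu$ is smooth (and $|a_0|<1$ when $|\sigma|$ is small). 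As $u_0$ is prescribed, the double-exponential factor on the left of \eqref{thmassumea} equals a finite number $\Lambda$ determined by $\normbp{u_0}{-1+\f2p}$ and $\wt\mu(0)$; hence, choosing $|\sigma|$ small enough --- relative to $\normbp{u_0}{-1+\f2p}$ --- that $C(\Omega_0,\mu)\,|\sigma|\,\Lambda\le c_0\wt\mu(0)$, condition \eqref{thmassumea} holds and Theorem~\ref{mainthm1} produces the unique global solution $(a,u)$ in the stated spaces. Setting $\rho\eqdefa(1+a)^{-1}$, so that $\rho_0=1+\sigma\chi_{\Omega_0}$, gives a global solution of \eqref{INS} with $u\in\cC([0,\infty);B^{-1+\f2p}_{p,1}(\R^2))\cap L^1(\R^+;B^{1+\f2p}_{p,1}(\R^2))$, unique by the uniqueness part of Theorem~\ref{mainthm1}.

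Next I would recover the patch structure. Since $u\in L^1(\R^+;B^{1+\f2p}_{p,1}(\R^2))$ and $B^{1+\f2p}_{p,1}(\R^2)\hookrightarrow B^1_{\infty,1}(\R^2)\hookrightarrow C^{0,1}(\R^2)$, the velocity is integrable in time with values in the Lipschitz class, so the flow $X_u$ of \eqref{flow} is well defined and, for every $t$, a bi-Lipschitz homeomorphism of $\R^2$ that preserves Lebesgue measure because $\dive u=0$. As $a$ is a bounded solution of the transport equation $\pa_t a+u\cdot\na a=0$ with Lipschitz drift, it must coincide with $a_0\circ X_u(t,\cdot)^{-1}=-\f{\sigma}{1+\sigma}\chi_{\Omega_t}$, where $\Omega_t\eqdefa X_u(t,\Omega_0)$; equivalently $\rho(t)=1+\sigma\chi_{\Omega_t}$, and $|\Omega_t|=|\Omega_0|$ by measure preservation. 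This establishes everything except the persistence of the $C^1$ regularity of $\pa\Omega_t$.

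For that last point I would import the tangential (striated/conormal) regularity argument of \cite{dm}. Representing $\pa\Omega_0$ locally as a level set, one picks a non-vanishing vector field $w_0$ tangent to $\pa\Omega_0$ and transports it along the flow as a tangent vector: $w(t)\eqdefa(\pa_{w_0}X_u)(t,X_u(t,\cdot)^{-1})$ solves $\pa_t w+u\cdot\na w=\pa_w u$, stays divergence free, and remains tangent to $\pa\Omega_t=X_u(t,\pa\Omega_0)$ for all $t$. Controlling this linear transport equation in the norm for which $\na u\in L^1(\R^+;B^{\f2p}_{p,1}(\R^2))$ acts as an admissible forcing --- via a logarithmic Gronwall estimate, exactly as in \cite{dm} --- keeps $w(t)$ continuous and nowhere vanishing, and a continuous non-vanishing tangent field characterizes a $C^1$ boundary. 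This last step is where I expect the real difficulty to lie: the velocity is controlled only in $L^1_t(\mathrm{Lip})$, not in $L^1_t(C^1)$, so the $C^1$ character of $\pa\Omega_t$ cannot be read off from the regularity of the flow map and must instead be extracted from the propagation of the interface's tangential regularity. All of that propagation is borrowed from \cite{dm}; the genuinely new input of Corollary~\ref{col1.1} is only that Theorem~\ref{mainthm1} supplies the underlying global solution with no smallness restriction whatsoever on $u_0$, which is precisely what removes the corresponding hypothesis in \cite{dm}.
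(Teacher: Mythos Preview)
Your proposal is correct and follows essentially the same approach as the paper: verify from \cite{dm} that $a_0=-\f{\sigma}{1+\sigma}\chi_{\Omega_0}$ and $\wt\mu(a_0)-\wt\mu(0)$ lie in the required multiplier spaces with norms of order $|\sigma|$, then apply Theorem~\ref{mainthm1} to obtain the global solution, and finally borrow the Lagrangian description of $\rho$ and the persistence of the $C^1$ boundary from \cite{dm}. The paper's own argument is in fact just the short paragraph preceding the corollary; your write-up is considerably more explicit about the flow map, measure preservation, and the tangential-regularity mechanism, all of which the paper simply defers to \cite{dm}.
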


\begin{rmk} \begin{itemize}
\item  We have considered here the physical case of a density given by a discontinuous function (immiscible fluids)
and of a viscous coefficient depending on the density of the fluid.
In particular, our  Corollary \ref{col1.1} removes the smallness
condition for the initial velocity field in  Corollary 1 of
\cite{dm}.  In fact,  for $\Om_0$ being a  bounded $C^2$ domain of
$\R^2$ and $u_0\in L^2(\Om)\cap B^1_{4,2}(\Om)$ (which is above the
critical regularity of \eqref{INS}), Danchin and Mucha \cite{dm2}
can prove a similar global wellposedness result for \eqref{INS} with
constant viscosity.

\item Given initial data $a_0,u_0$ in the scaling invariant spaces: $a_0\in L^\infty(\R^d)$ and $u_0\in
B^{-1+\f{d}p}_{p,r}(\R^d)$ for $1<p<d, 1<r<\infty,$ and which
satisfies some nonlinear smallness condition, we \cite{HPZ3} proved
that \eqref{INS1} with $\wt{\mu}(a)=\mu>0$ has a  global weak
solution. And the uniqueness of such solution is in progress.
\end{itemize}
\end{rmk}

\no{\bf Scheme of the proof and organization of the paper.} The
strategy to the  proof of both Theorem \ref{mainthm} and Theorem
\ref{mainthm1} is to seek a solution of \eqref{INS1} with the form
$u=v+ w$ with $(w,p)$ solving the classical Navier-Stokes system
\begin{equation}\label{weq}
\left\{\begin{array}{l}
 \pa_t w + w\cdot \na w - \mu\tri w +
\na p = 0,\qquad (t,x)\in\R^+\times\R^2, \\
\dv w=0,\\
 w|_{t=0} = u_0,
 \end{array}\right.
\end{equation}
and $(a,v, \Pi_1)$ solving \beq \label{veq} \left\{\begin{array}{l}
\pa_t a + (v + w)\cdot \na a = 0,\qquad (t,x)\in\R^+\times\R^2,\\
\pa_t v + v\cdot \na v + w\cdot \na v + v\cdot \na w - (1+a)\dive (\tilde{\mu}(a)\cM(v)) + (1+a)\na \Pi_1\\
=(1+a)\dive[(\tilde{\mu}(a)-\mu)\cM(w)] + \mu a\tri w -a\na p \eqdef F,\\
\dive v=0,\\
(a, v)|_{t=0} = (a_0, 0),
\end{array}\right.
\eeq where and in what follows, we shall always denote $\wt{\mu}(0)$
by $\mu.$

In Section 2, we shall first collect some basic facts on
Littlewood-Paley theory, and then present the  estimates to the free
transport equation and the pressure function determined by
\eqref{veq}; in Section 3, we solve \eqref{weq} for $w$ with $u_0\in
B^{-1+\f2p}_{p,1}(\R^2)$ for $1<p<4.$ We should mention that because
of the restriction to the index $p$ in $(1,4),$ the proof here is
much simpler than that in \cite{gallplan,ger}. Then we prove Theorem
\ref{mainthm} in Section 4. Finally along the same lines to the
proof of Theorem \ref{mainthm},
we present the proof of Theorem \ref{mainthm1} in the last section.\\

Let us complete this introduction by the notations we shall use in
this context.

\no\textbf{Notation.} Let $A$,$B$ be two operators, we denote
$[A;B]=AB-BA$, the commutator between $A$ and $B$.
 For $a \lesssim b$, we mean that there is a uniform constant $C$, which may be different on different lines,
 such that $a \leq Cb$. We shall denote by $(a\ |\ b)$ the $L^2$ inner product of $a$ and $b$. $(d_j)_{j \in \mathbb{Z}}$
will be a generic element of $\ell^1(\mathbb{Z})$ so that
$\sum_{j\in \mathbb{Z}}d_j = 1$. For $X$ a Banach space and $I$ an
interval of $\R,$ we denote by ${\cC}(I;\,X)$ the set of continuous
functions on $I$ with values in $X,$ and by  $L^q(I;\,X)$ stands for
the set of measurable functions on $I$ with values in $X,$ such that
$t\longmapsto\|f(t)\|_{X}$ belongs to $L^q(I).$

\bigskip
\renewcommand{\theequation}{\thesection.\arabic{equation}}
\setcounter{equation}{0}
\section{Preliminary Estimates}

\subsection{Some Basic Facts on Littlewood-Paley Theory} \label{subsect2.1} For the convenience of the readers, we recall the following basic
facts on Littlewood-Paley theory from \cite{BCD}: for $a\in{\mathcal
S}'(\R^2),$ we set
 \beq
\begin{split}
&\Delta_ja\eqdefa\cF^{-1}(\varphi(2^{-j}|\xi|)\widehat{a}),
 \qquad\ \
S_ja\eqdefa\cF^{-1}(\chi(2^{-j}|\xi|)\widehat{a}),
\end{split} \label{1.0}\eeq where $\cF a$ and $\widehat{a}$ denote
the Fourier transform of the distribution $a,$ ~$\varphi(\tau)$ and
$\chi(\tau)$ are  smooth functions such that \beno
\begin{split}
&\Supp \varphi \subset \Bigl\{\tau \in \R\,/\  \ \frac34\leq |\tau|
\leq \frac83 \Bigr\}\andf \  \ \forall
 \tau>0\,,\ \sum_{j\in\Z}\varphi(2^{-j}\tau)=1,\\
&\Supp \chi \subset \Bigl\{\tau \in \R\,/\  \ \ |\tau| \leq \frac43
\Bigr\}\quad \ \ \andf \  \ \, \chi(\tau)+ \sum_{j\geq
0}\varphi(2^{-j}\tau)=1.
\end{split}
 \eeno
We have the formal decomposition
\begin{equation*}
u=\sum_{j\in\Z}\Delta_j \,u,\quad\forall\,u\in {\mathcal
{S}}'(\R^2)/{\mathcal{P}}[\R^2],
\end{equation*}
where ${\mathcal{P}}[\R^2]$ is the set of polynomials (see
\cite{PE}). Moreover, the Littlewood-Paley decomposition satisfies
the property of almost orthogonality:
\begin{equation}\label{Pres_orth}
\Delta_j\Delta_k u\equiv 0 \quad\mbox{if}\quad| j-k|\geq 2
\quad\mbox{and}\quad\Delta_j(S_{k-1}u\Delta_k v) \equiv
0\quad\mbox{if}\quad| j-k|\geq 5.
\end{equation}

\begin{defi}\label{def1.1}[Definition 2.15 of \cite{BCD}]
{\sl  Let $(p,r)\in[1,+\infty]^2,$ $s\in\R.$ The homogeneous Besov
space $B^s_{p,r}(\R^2)$ consists of those distributions
$u\in{\mathcal S}_h'(\R^2),$ which means that $u\in {\mathcal
S}'(\R^2)$ and $\lim_{j\to-\infty}\|S_ju\|_{L^\infty}=0$ (see
Definition 1.26 of \cite{BCD}), such that
$$
\|u\|_{B^s_{p,r}}\eqdefa\Big(2^{qs}\|\Delta_q u\|_{L^{p}}\Big)_{\ell
^{r}(\Z)}<\infty. $$}
\end{defi}

In  order to obtain a better description of the regularizing effect
to the transport-diffusion equation, we will use Chemin-Lerner type
spaces $\widetilde{L}^{\lambda}_T(B^s_{p,r}(\R^2))$ (see \cite{BCD}
for instance).
\begin{defi}\label{chaleur+}
Let $(r,\lambda,p)\in[1,\,+\infty]^3$ and $T\in (0,\,+\infty]$. We
define $\widetilde{L}^{\lambda}_T(B^s_{p\,r}(\R^2))$ as the
completion of $C([0,T];\cS(\R^2))$ by the norm
$$
\| f\|_{\widetilde{L}^{\lambda}_T(B^s_{p,r})} \eqdefa
\Big(\sum_{q\in\Z}2^{qrs} \Big(\int_0^T\|\Delta_q\,f(t)
\|_{L^p}^{\lambda}\, dt\Big)^{\frac{r}{\lambda}}\Big)^{\frac{1}{r}}
<\infty,
$$
with the usual change if $r=\infty.$  For short, we just denote this
space by $\widetilde{L}^{\lambda}_T(B^s_{p,r}).$
\end{defi}

We also need the following form of functional framework, which is a
sort of  generalization to the weighted Chemin-Lerner type norm
defined \cite{PZ1, PZ2}:

\begin{defi}\label{defpz}
Let $f(t)\in L^1_{loc}(\R^+)$, $f(t)\geq 0$ and $X$ be a Banach
space. We define
$$\|u\|_{L^1_{T,f}(X)}\eqdefa \int_0^Tf(t)\|
u(t)\|_{X}\,dt. $$
\end{defi}

\begin{lem}\label{bernlem}
{\sl Let $\cB$ be a ball of $\mathbb{R}^2$, and $\cC$ be a ring of $\mathbb{R}^2$; let $1\leq p_2 \leq p_1 \leq \infty$. Then there hold:\\
If the support of $\hat a$ is included in $2^k \cB$, then \beno
\|\pa^\al_{x}a\|_{L^{p_1}} \lesssim
2^{k(|\al|+2(\f{1}{p_2}-\f{1}{p_1}))}\|a\|_{L^{p_2}}. \eeno If the
support of $\hat a$ is included in $2^k \cC$, then \beno
\|a\|_{L^{p_1}} \lesssim 2^{-kN}\sup_{|\al|=N}\|\pa_x^\al
a\|_{L^{p_1}}. \eeno}
\end{lem}

\begin{lem}\label{action}
{\sl Let $\theta$ be a smooth function supported in an annulus $\cC$
of $\R^d$. There exists a constant $C$ such that for any $C^{0,1}$
measure-preserving global diffeomorphism $\psi$ over $\R^d$ with
inverse $\phi$, any tempered distribution $u$ with $\hat{u}$
supported in $\la\cC$, any $p\in[1,\infty]$ and any $(\la, \mu)\in
(0,\infty)^2$, we have \beno
\|\theta(\mu^{-1}D)(u\circ\psi)\|_{L^p}\leq
C\|u\|_{L^p}\min(\f{\mu}{\la}\|\na\phi\|_{L^\infty},\f{\la}{\mu}\|\na
\psi\|_{L^\infty}). \eeno}
\end{lem}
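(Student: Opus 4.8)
The plan is to prove the two estimates inside the $\min$ separately. We may assume $u\in L^p$, for otherwise the right-hand side is infinite and there is nothing to prove; then $u$ is smooth and band-limited, and $u\circ\psi\in L^p$ because $\psi$ is measure preserving, so that $\|g\circ\psi\|_{L^p}=\|g\|_{L^p}$ for every $g\in L^p$. The common elementary ingredient is that, for a smooth function $\vartheta$ supported in a fixed ring, $\vartheta(\mu^{-1}D)$ is the convolution against $\mu^d\check\vartheta(\mu\,\cdot)$, whose $L^1$ norm equals $\|\check\vartheta\|_{L^1}$, hence is independent of $\mu$; by Young's inequality $\|\vartheta(\mu^{-1}D)f\|_{L^p}\le C\|f\|_{L^p}$ for all $p\in[1,\infty]$, with $C$ depending only on $\vartheta$.

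\emph{The bound $C(\la/\mu)\|\na\psi\|_{L^\infty}\|u\|_{L^p}$.} Since $\Supp\theta\subset\cC$ stays away from the origin, write $\theta(\zeta)=\sum_{k=1}^{d}\zeta_k\,\theta_k(\zeta)$ with $\theta_k(\zeta)\eqdefa -i\zeta_k|\zeta|^{-2}\theta(\zeta)$, still smooth and supported in $\cC$; then $\theta(\mu^{-1}D)=\mu^{-1}\sum_k\theta_k(\mu^{-1}D)\,\pa_k$. Applying the elementary ingredient to $f=\pa_k(u\circ\psi)$ and using the chain rule $\pa_k(u\circ\psi)=\sum_l(\pa_lu)\circ\psi\;\pa_k\psi_l$ we obtain
$$\|\theta(\mu^{-1}D)(u\circ\psi)\|_{L^p}\le C\mu^{-1}\|\na\psi\|_{L^\infty}\,\|(\na u)\circ\psi\|_{L^p}=C\mu^{-1}\|\na\psi\|_{L^\infty}\,\|\na u\|_{L^p},$$
and $\|\na u\|_{L^p}\le C\la\|u\|_{L^p}$ by Bernstein's inequality (Lemma \ref{bernlem}) since $\hat u$ is supported in $\la\cC$.

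\emph{The bound $C(\mu/\la)\|\na\phi\|_{L^\infty}\|u\|_{L^p}$.} Pick $\widetilde\theta$ smooth, supported in a slightly larger ring and equal to $1$ on $\cC$, so that $u=\widetilde\theta(\la^{-1}D)u$, and write as before $\widetilde\theta(\zeta)=\sum_l(i\zeta_l)\rho_l(\zeta)$ with $\rho_l$ smooth and compactly supported away from the origin; this gives $u=\la^{-1}\sum_l\pa_l g_l$ with $g_l\eqdefa\rho_l(\la^{-1}D)u$ and $\|g_l\|_{L^p}\le C\|u\|_{L^p}$. The derivative is now transferred through $\psi$ via measure preservation: using the chain rule together with the Piola identity $\sum_m\pa_m\big(\mathrm{cof}(\na\psi)_{ml}\big)=0$ one gets, for each $l$,
$$(\pa_l g_l)\circ\psi=\sum_m\pa_m\big(\mathrm{cof}(\na\psi)_{ml}\,(g_l\circ\psi)\big),\qquad\text{hence}\qquad u\circ\psi=\la^{-1}\sum_{l,m}\pa_m\big(\mathrm{cof}(\na\psi)_{ml}(g_l\circ\psi)\big).$$
Since $\theta(\mu^{-1}D)\pa_m=\mu\,\eta_m(\mu^{-1}D)$ with $\eta_m(\zeta)\eqdefa i\zeta_m\theta(\zeta)$ smooth and supported in $\cC$, the elementary ingredient yields
$$\|\theta(\mu^{-1}D)(u\circ\psi)\|_{L^p}\le C\frac{\mu}{\la}\sum_{l,m}\|\mathrm{cof}(\na\psi)_{ml}(g_l\circ\psi)\|_{L^p}\le C\frac{\mu}{\la}\,\|\mathrm{cof}(\na\psi)\|_{L^\infty}\sum_l\|g_l\|_{L^p}.$$
Finally, because $\det\na\psi\equiv1$ one has $\mathrm{cof}(\na\psi)=\big((\na\psi)^{-1}\big)^{T}$, and differentiating $\phi\circ\psi=\Id$ gives $(\na\psi(x))^{-1}=\na\phi(\psi(x))$; thus $\|\mathrm{cof}(\na\psi)\|_{L^\infty}=\|\na\phi\|_{L^\infty}$, and the right-hand side is $\le C(\mu/\la)\|\na\phi\|_{L^\infty}\|u\|_{L^p}$. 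Taking the better of the two cases proves the lemma.

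\emph{Main difficulty.} The only delicate point is that $\psi$ is merely $C^{0,1}$, so $\na\psi$ exists only almost everywhere and $\mathrm{cof}(\na\psi)$ is only in $L^\infty$; the chain rule, and above all the Piola identity, are classical for $C^1$ maps but have to be justified here. The cleanest route is to read the identity $u\circ\psi=\la^{-1}\sum_{l,m}\pa_m\big(\mathrm{cof}(\na\psi)_{ml}(g_l\circ\psi)\big)$ directly as an equality of distributions, obtained from the change-of-variables formula for Lipschitz homeomorphisms of unit Jacobian and an integration by parts that is legitimate since each $g_l$ is smooth and $\varphi\circ\phi$ is Lipschitz with compact support for $\varphi\in C_c^\infty$. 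Alternatively one mollifies $\psi$, proves the estimate for the smooth approximants with constants depending only on $\|\na\psi\|_{L^\infty}$ and $\|\na\phi\|_{L^\infty}$, and passes to the limit. Everything else reduces to Young's inequality, Bernstein's inequality and the $L^p$-invariance under composition with $\psi$.
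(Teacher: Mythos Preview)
The paper does not prove this lemma; it is listed without proof in Subsection~\ref{subsect2.1} among several ``basic facts on Littlewood-Paley theory'' recalled from \cite{BCD}. Your argument is correct and is essentially the standard one: for the $\la/\mu$ bound you factor a derivative out of $\theta(\mu^{-1}D)$ and let it fall on $u\circ\psi$, then use Bernstein on $u$; for the $\mu/\la$ bound you factor a derivative out of $u$ (via $\widetilde\theta(\la^{-1}D)$), push it through $\psi$ by the Piola identity, and absorb it back into $\theta(\mu^{-1}D)$. Your discussion of the $C^{0,1}$ case is the only genuinely delicate point, and the distributional reading via the change-of-variables formula (or mollification) that you sketch is exactly how one makes it rigorous.
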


\begin{lem}\label{heatkernel}
{\sl  If the support of $\hat u$ is included in $\la \cC$, then
there exists a positive constant $c$, such that \beno
\|e^{t\tri}u\|_{L^p}\lesssim e^{-c\la^2t}\|u\|_{L^p}\quad \mbox{for
any}\quad p\in[1,\infty]. \eeno}
\end{lem}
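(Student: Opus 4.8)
The plan is to write $e^{t\tri}u$ as a convolution against a kernel whose $L^1$ norm is bounded by the Gaussian $e^{-ct\la^2}$, after which Young's inequality gives the claim for every $p\in[1,\infty]$ at once. Fix once and for all a smooth function $\phi$ compactly supported in an annulus $\widetilde{\cC}$ of $\R^d$ with $0\notin\widetilde{\cC}$ and with $\phi\equiv 1$ on $\cC$, and set $c_0\eqdefa\inf_{\zeta\in\widetilde{\cC}}|\zeta|>0$. Since $\widehat u$ is supported in $\la\cC$, we have $\widehat u=\phi(\la^{-1}\cdot)\,\widehat u$, hence $e^{t\tri}u=g_{t,\la}\star u$ with $g_{t,\la}\eqdefa\cF^{-1}\bigl(e^{-t|\xi|^2}\phi(\la^{-1}\xi)\bigr)$, and it suffices to prove $\|g_{t,\la}\|_{L^1}\leq C\,e^{-ct\la^2}$ uniformly in $\la>0$ and $t>0$.

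Next I would rescale. The substitution $\xi=\la\zeta$ gives $g_{t,\la}(x)=\la^d h(\la x)$ with $h(y)\eqdefa(2\pi)^{-d}\int_{\R^d}e^{iy\cdot\zeta}\,e^{-t\la^2|\zeta|^2}\phi(\zeta)\,d\zeta$, so that $\|g_{t,\la}\|_{L^1}=\|h\|_{L^1}$. To estimate $\|h\|_{L^1}$ one integrates by parts: for any integer $N$,
\[
(1+|y|^2)^N h(y)=(2\pi)^{-d}\int_{\R^d}e^{iy\cdot\zeta}\,(\Id-\tri_\zeta)^N\!\bigl[e^{-t\la^2|\zeta|^2}\phi(\zeta)\bigr]\,d\zeta.
\]
On the fixed bounded support of $\phi$ each $\zeta$-derivative of $e^{-t\la^2|\zeta|^2}$ costs at most a factor $C(1+t\la^2)$, so $\bigl|(\Id-\tri_\zeta)^N[e^{-t\la^2|\zeta|^2}\phi(\zeta)]\bigr|\leq C_N(1+t\la^2)^{2N}e^{-t\la^2|\zeta|^2}\leq C_N(1+t\la^2)^{2N}e^{-c_0^2t\la^2}$ there. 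Choosing $N$ with $2N>d$ and integrating in $y$ yields $\|h\|_{L^1}\leq C_N(1+t\la^2)^{2N}e^{-c_0^2t\la^2}$.

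Finally, since $(1+s)^{2N}\leq C_N\,e^{\frac{c_0^2}{2}s}$ for all $s\geq 0$, taking $s=t\la^2$ gives $\|g_{t,\la}\|_{L^1}=\|h\|_{L^1}\leq C_N\,e^{-\frac{c_0^2}{2}t\la^2}$, and the lemma follows with $c=c_0^2/2$. There is no genuine obstacle here, the argument being classical (compare Lemma~2.4 of \cite{BCD}); the only point that requires a little care is the bookkeeping in the previous paragraph, namely verifying that the derivatives landing on the Gaussian produce only powers of $(1+t\la^2)$ and no stray powers of $\la$. This is exactly what the $\la$-independent choice of the cut-off $\phi$ (smooth, supported in a fixed annulus away from the origin) guarantees, and it also makes transparent that the resulting constant $c$ is uniform in $t$ and $\la$.
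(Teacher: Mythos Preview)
Your argument is correct and is essentially the classical proof (it is, as you note, the content of Lemma~2.4 in \cite{BCD}). The paper does not supply its own proof of this lemma: it is listed in Subsection~\ref{subsect2.1} among the ``basic facts on Littlewood-Paley theory'' recalled from \cite{BCD} without demonstration. So there is nothing to compare against, and your write-up stands on its own. One cosmetic remark: the phrase ``each $\zeta$-derivative costs at most a factor $C(1+t\la^2)$'' is slightly loose, since successive derivatives also act on the polynomial prefactor already produced; what is literally true (and what you actually use) is that after $m$ derivatives the total prefactor is a polynomial of degree at most $m$ in $t\la^2$ with coefficients bounded on $\Supp\phi$, hence dominated by $C_m(1+t\la^2)^m$.
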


\begin{lem}
{\sl Let $p_1 \geq p_2 \geq 1$, and $s_1 \leq \f{2}{p_1}$, $s_2 \leq
\f{2}{p_2}$ with $s_1 + s_2 > 0$. Let $a \in
B^{s_1}_{p_1,1}(\mathbb{R}^2)$, $b \in
B^{s_2}_{p_2,1}(\mathbb{R}^2)$. Then $ab \in
B_{p_1,1}^{s_1+s_2-\f{2}{p_1}}(\mathbb{R}^2)$ and \beno
\|ab\|_{B_{p_1,1}^{s_1+s_2-\f{2}{p_1}}} \lesssim
\|a\|_{B^{s_1}_{p_1,1}} \|b\|_{B^{s_2}_{p_2,1}}. \eeno}
\end{lem}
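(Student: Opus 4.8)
The plan is to prove the product law by a standard Bony paraproduct decomposition, writing $ab = T_ab + T_ba + R(a,b)$, where $T_ab = \sum_j S_{j-1}b\,\Delta_j a$ and $R(a,b) = \sum_{|j-k|\le 1}\Delta_j a\,\Delta_k b$. The key point is to estimate each of the three pieces separately in $B^{s_1+s_2-2/p_1}_{p_1,1}$, using the spectral localization of each term together with the Bernstein inequalities of Lemma \ref{bernlem}.

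First I would handle the paraproduct $T_ab$. Since $\Delta_q(S_{j-1}b\,\Delta_j a)$ vanishes unless $j$ is within a fixed distance of $q$, I would bound $\|S_{j-1}b\|_{L^\infty}$ by summing $\|\Delta_{j'}b\|_{L^\infty}$ over $j'\le j-2$, and then use Bernstein (Lemma \ref{bernlem}) to convert $\|\Delta_{j'}b\|_{L^\infty}\lesssim 2^{2j'/p_2}\|\Delta_{j'}b\|_{L^{p_2}}$. The condition $s_2\le 2/p_2$ guarantees that $\sum_{j'\le j}2^{j'(2/p_2-s_2)}\|\Delta_{j'}b\|_{L^{p_2}}2^{j's_2}$ is controlled by $2^{j(2/p_2-s_2)}\|b\|_{B^{s_2}_{p_2,1}}$ when $s_2<2/p_2$, and directly by $\|b\|_{B^{s_2}_{p_2,1}}$ when $s_2 = 2/p_2$; either way one gets $\|S_{j-1}b\|_{L^\infty}\lesssim 2^{j\max(0,2/p_2-s_2)}c_j\|b\|_{B^{s_2}_{p_2,1}}$ with $(c_j)\in\ell^1$. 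Multiplying by $\|\Delta_j a\|_{L^{p_1}}\lesssim 2^{-js_1}d_j\|a\|_{B^{s_1}_{p_1,1}}$, resumming the $\ell^1$ sequences via a convolution argument, and noting $2^{q(s_1+s_2-2/p_1)}$ matches up because $\Delta_q$ forces $q\sim j$ and $s_2 - 2/p_2 = (s_1+s_2-2/p_1) - (s_1 + 2/p_2 - 2/p_1) \le s_1+s_2-2/p_1 - s_1$... I would organize this bookkeeping so that the net power of $2^q$ cancels. The symmetric term $T_ba$ is handled the same way with the roles of the indices adapted, using $s_1\le 2/p_1$.

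For the remainder term $R(a,b) = \sum_k \Delta_k a\,\widetilde\Delta_k b$ with $\widetilde\Delta_k = \Delta_{k-1}+\Delta_k+\Delta_{k+1}$, the spectral support of $\Delta_k a\,\widetilde\Delta_k b$ is contained in a ball of radius $\sim 2^k$, so $\Delta_q R(a,b)$ only picks up terms with $k\gtrsim q$. Here I would use Hölder with $\frac1{p_1} = \frac1{p_1}$ after first applying Bernstein to $\widetilde\Delta_k b$ to move it from $L^{p_2}$ down to $L^r$ where $\frac1r = \frac1{p_1}-\frac1{p_2}\ge 0$ (this is where $p_1\ge p_2$ is used), giving $\|\Delta_k a\,\widetilde\Delta_k b\|_{L^{p_1}}\le \|\Delta_k a\|_{L^{p_1}}\|\widetilde\Delta_k b\|_{L^\infty}\lesssim 2^{2k/p_2}\|\Delta_k a\|_{L^{p_1}}\|\widetilde\Delta_k b\|_{L^{p_2}}$. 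Then $\|\Delta_q R(a,b)\|_{L^{p_1}}\lesssim \sum_{k\ge q-N}2^{2k/p_2}2^{-k(s_1+s_2)}d_k c_k\|a\|_{B^{s_1}_{p_1,1}}\|b\|_{B^{s_2}_{p_2,1}}$, and multiplying by $2^{q(s_1+s_2-2/p_1)}$ and summing over $q$, the convergence of the geometric series in $k\ge q$ requires exactly the positivity $s_1+s_2-2/p_2 > -2/p_2$, i.e. $s_1+s_2>0$. Here I would also use Bernstein once more, $\|\Delta_q(\cdot)\|_{L^{p_1}}\lesssim 2^{2q(1/p_2-1/p_1)}\|\Delta_q(\cdot)\|_{L^{p_2}}$ is not quite what's needed — rather the low-frequency loss $2^{2q/p_2}$ relative to the target space $B^{s_1+s_2-2/p_1}_{p_1,1}$ is absorbed because $s_1+s_2-2/p_1 - (s_1+s_2-2/p_2) = 2/p_2-2/p_1 \ge 0$ means we lose derivatives in the right direction, so applying the first inequality of Lemma \ref{bernlem} with $p_2\le p_1$ on the final Littlewood-Paley block is legitimate.

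The main obstacle, as usual with product laws in Besov spaces, is the careful bookkeeping of the three regimes — ensuring that the index constraints $s_1\le 2/p_1$, $s_2\le 2/p_2$, $s_1+s_2>0$, and $p_1\ge p_2$ are each used exactly where needed, and that the endpoint cases $s_i = 2/p_i$ (where one of the paraproduct sums becomes logarithmically borderline but is saved by the $\ell^1$ summability in the Besov norm, not by a geometric factor) are handled correctly. No genuinely new idea is required beyond the standard Bony decomposition and Bernstein's lemma; the content is in verifying that none of the borderline sums diverge. This is exactly the type of estimate proved in \cite{BCD}, so I would cite that for the routine parts and only spell out the endpoint bookkeeping.
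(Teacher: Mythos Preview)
The paper does not prove this lemma at all; it is listed among the basic Littlewood--Paley facts in Subsection~2.1 that are quoted from \cite{BCD} without argument. Your Bony-decomposition approach, using Bernstein's inequalities (Lemma~\ref{bernlem}) to handle each of $T_ab$, $T_ba$, and $R(a,b)$, is exactly how such product laws are established in that reference, so your proposal supplies the standard argument the paper omits.

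One remark on your remainder estimate: the bookkeeping there is muddled and in fact points to a genuine issue with the statement as printed. The scaling-invariant target regularity for this product is $s_1+s_2-\tfrac{2}{p_2}$, not $s_1+s_2-\tfrac{2}{p_1}$; since $p_1\ge p_2$ the printed exponent is \emph{larger}, hence a strictly stronger claim than scaling allows when $p_1>p_2$. Your difficulty in closing the remainder sum (the leftover factor $2^{k(2/p_2-2/p_1)}$ that you could not absorb) is a symptom of this. The correct route is to apply Bernstein at frequency $2^q$ to pass from $L^p$ (with $\tfrac1p=\tfrac1{p_1}+\tfrac1{p_2}$) to $L^{p_1}$, gaining $2^{2q/p_2}$, and then the $\ell^1$ sum over $q\le k+N_0$ converges precisely when $s_1+s_2>0$; this yields $ab\in B^{s_1+s_2-2/p_2}_{p_1,1}$, which is what the paper actually uses downstream (always immediately followed by the trivial embedding into a lower-regularity space). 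So your method is right, but it proves the scaling-correct version rather than the one literally stated.
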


\begin{prop}\label{stokesprop}
{\sl Let $p\in (1,\infty)$, $r\in[1,\infty]$ and $s\in \mathbb{R}$.
Let $u_0\in B^s_{p,r}(\R^2)$ be a divergence-free field and
$g\in\wt{L}^1_T(B^s_{p,r})$. Then the following system
\begin{equation*}\label{stokes}
 \left\{\begin{array}{l}
\displaystyle \pa_t u -\nu\tri u+\na \Pi=g,\qquad (t,x)\in\R^+\times\R^2,\\
\displaystyle \dive u = 0,\\
\displaystyle u|_{t=0}=u_0,
\end{array}\right.
\end{equation*}
 has a unique solution $(u, \na\Pi)$ so that  \beno
\|u\|_{\wt{L}^\infty_T(B^s_{p,r})}
+\mu\|u\|_{\wt{L}^1_T(B^{s+2}_{p,r})}+\|\na\Pi\|_{\wt{L}^1_T(B^s_{p,r})}
\leq C\bigl(\|u_0\|_{B^s_{p,r}} +
\|g\|_{\wt{L}^1_T(B^s_{p,r})}\bigr). \eeno}
\end{prop}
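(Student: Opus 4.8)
The plan is to reduce the Stokes system to a forced heat equation by projecting onto divergence-free vector fields, and then to run the estimates dyadic block by dyadic block, using the heat-kernel decay recorded in Lemma \ref{heatkernel}. Taking the divergence of the momentum equation and using $\dive u=0$ gives $\Delta\Pi=\dive g$, so that $\na\Pi=\na\Delta^{-1}\dive g=(\Id-\P)g$, where $\P$ denotes the Leray projector onto divergence-free vector fields, a Fourier multiplier homogeneous of degree zero with symbol smooth off the origin. Applying $\P$ to the momentum equation and using $\P u=u$, $\P\na\Pi=0$ shows that $u$ solves $\pa_t u-\nu\Delta u=\P g$ with $u|_{t=0}=u_0$. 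Since $\P$ and $\Id-\P$ are homogeneous degree-zero multipliers, the Mikhlin--H\"ormander theorem gives their $L^p(\R^2)$-boundedness for every $p\in(1,\infty)$ --- this is precisely where the hypothesis $p\in(1,\infty)$ enters --- and since they commute with the cut-offs $\Delta_j$ one gets $\|\Delta_j(\na\Pi)(t)\|_{L^p}+\|\Delta_j(\P g)(t)\|_{L^p}\lesssim \|\Delta_j g(t)\|_{L^p}$ for all $j\in\Z$. Integrating in time, weighting by $2^{js}$ and taking the $\ell^r(\Z)$ norm already yields the bound on $\|\na\Pi\|_{\wt{L}^1_T(B^s_{p,r})}$.

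For $u$, the Duhamel formula gives $\Delta_j u(t)=e^{\nu t\Delta}\Delta_j u_0+\int_0^t e^{\nu(t-\tau)\Delta}\Delta_j\P g(\tau)\,d\tau$, and since $\widehat{\Delta_j u}$ is supported in an annulus of size $2^j$, Lemma \ref{heatkernel} and the previous step yield
\beno
\|\Delta_j u(t)\|_{L^p}\lesssim e^{-c\nu 2^{2j}t}\|\Delta_j u_0\|_{L^p}+\int_0^t e^{-c\nu 2^{2j}(t-\tau)}\|\Delta_j g(\tau)\|_{L^p}\,d\tau.
\eeno
Taking the supremum over $t\in[0,T]$ controls $\sup_{[0,T]}\|\Delta_j u\|_{L^p}$ by $\|\Delta_j u_0\|_{L^p}+\|\Delta_j g\|_{L^1_T(L^p)}$; integrating the $2^{2j}$-weighted version over $[0,T]$, using $\int_0^\infty 2^{2j}e^{-c\nu 2^{2j}\sigma}\,d\sigma\lesssim \nu^{-1}$ together with Young's inequality in the time variable, controls $2^{2j}\|\Delta_j u\|_{L^1_T(L^p)}$ by $\nu^{-1}\bigl(\|\Delta_j u_0\|_{L^p}+\|\Delta_j g\|_{L^1_T(L^p)}\bigr)$. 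Multiplying these two inequalities by $2^{js}$ and taking the $\ell^r(\Z)$ norm produces the bounds on $\|u\|_{\wt{L}^\infty_T(B^s_{p,r})}$ and $\nu\|u\|_{\wt{L}^1_T(B^{s+2}_{p,r})}$ (the term written $\mu\|u\|_{\wt{L}^1_T(B^{s+2}_{p,r})}$ in the statement). The virtue of the Chemin--Lerner norms here is precisely that the time integration is performed before the $\ell^r$ summation, so Minkowski's inequality is applied in the favorable direction and no loss occurs.

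For existence I would simply \emph{define} $u$ by the Duhamel formula above with forcing $\P g$, and $\na\Pi\eqdefa(\Id-\P)g$; the estimates just obtained place $u$ in $\wt{L}^\infty_T(B^s_{p,r})\cap\wt{L}^1_T(B^{s+2}_{p,r})$ and $\na\Pi$ in $\wt{L}^1_T(B^s_{p,r})$, and one checks directly that the pair solves the Stokes system, the initial condition being attained by strong continuity of the heat semigroup on each dyadic block together with dominated convergence in the $\ell^r$ sum (one may first treat $r<\infty$ and recover $r=\infty$ by a routine limiting argument). Uniqueness is immediate from linearity: the difference of two solutions solves the system with $u_0=0$ and $g=0$, so the displayed estimate forces it to vanish. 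I do not anticipate any serious obstacle here --- the only genuine analytic input is the $L^p$-boundedness of the Leray projector, which is what dictates the restriction $1<p<\infty$; everything else is the standard heat-smoothing bookkeeping in Besov and Chemin--Lerner spaces.
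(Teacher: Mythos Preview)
Your proof is correct and is the standard argument: project with the Leray projector to reduce to a heat equation, use the localized heat-kernel decay of Lemma~\ref{heatkernel} block by block, and sum in $\ell^r$ after integrating in time. You also correctly identified the $\nu/\mu$ typographical mismatch between the equation and the estimate.

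As for comparison with the paper: there is nothing to compare. The paper does not prove Proposition~\ref{stokesprop}; it is listed in Subsection~\ref{subsect2.1} among the ``basic facts on Littlewood-Paley theory'' recalled from \cite{BCD}, alongside the Bernstein inequalities and the heat-kernel lemma, all stated without proof. Your argument is exactly the one a reader would supply (and is essentially the proof given in \cite{BCD}), so there is no alternative route in the paper to discuss.
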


\subsection{Estimates of the transport equation} The goal of  this
section is to investigate the transport equation in \eqref{veq} \beq
\label{transeq} \pa_t a + (v+w) \cdot \na a =0, \qquad a|_{t=0} =
a_0. \eeq More precisely, we shall prove the following proposition:
\begin{prop}\label{aprop}
{\sl Let $1 < q \leq p $ with $\f1q -\f1p \leq \f12$. Let $v,w \in
L^1((0,T),B^{1+\f2p}_{p,1}(\R^2))$ be divergence free vector fields,
  and $a_0 \in B^{\f2q}_{q,1}(\R^2)$. We denote $f(t)\eqdefa\|w(t)\|_{B^{1+\f2p}_{p,1}}$
  and $a_\la \eqdefa a\exp\bigl\{-\la\int_0^tf(\tau)d\tau\bigr\}$.
 Then \eqref{transeq} has a unique solution $a
\in {\cC}([0,T]; B^{\f2q}_{q,1}(\R^2))$ so that \beq \label{aest}
\|a_\la\|_{\wt{L}^\infty_t(B^{\f2q}_{q,1})}
+\f{\la}{2}\|a_\la\|_{L^1_{t,f}(B^{\f2q}_{q,1})}\leq
\|a_0\|_{B^{\f2q}_{q,1}}+
C\|v\|_{L^1_t(B^{1+\f2p}_{p,1})}\|a_\la\|_{\wt{L}^\infty_t(B^{\f2q}_{q,1})}
\eeq for any $t\in(0,T]$ and $\la$ large enough, and where
$\|a_\la\|_{L^1_{t,f}(B^{\f2q}_{q,1})}$ is given by Definition
\ref{defpz}.}
\end{prop}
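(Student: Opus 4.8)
The plan is to run a standard Littlewood-Paley energy estimate on the transport equation \eqref{transeq}, localized in frequency, and then to absorb the drift term $w\cdot\na a$ into an exponential weight in time — exactly the trick encoded in the definition of $a_\la$. First I would localize: applying $\Delta_j$ to \eqref{transeq} gives
\beq\label{pp:loc}
\pa_t\Delta_j a + (v+w)\cdot\na\Delta_j a = [(v+w)\cdot\na\,;\,\Delta_j]a =: R_j,
\eeq
and since $v+w$ is divergence free, an $L^q$ energy estimate on \eqref{pp:loc} yields
$\f{d}{dt}\|\Delta_j a\|_{L^q}\le \|R_j\|_{L^q}$. The commutator $R_j$ is estimated by the classical commutator lemma for Besov spaces (see \cite{BCD}): one splits $R_j$ via Bony's decomposition into paraproduct and remainder pieces, uses Lemma \ref{bernlem} for the Bernstein inequalities, and obtains
$2^{j\f2q}\|R_j\|_{L^q}\lesssim d_j(t)\bigl(\|\na v\|_{B^{\f2p}_{p,1}}+\|\na w\|_{B^{\f2p}_{p,1}}\bigr)\|a\|_{B^{\f2q}_{q,1}}$,
where the condition $1<q\le p$ with $\f1q-\f1p\le\f12$ is precisely what is needed to make the product $\na(v+w)\cdot a$ land in $B^{\f2q-1}_{q,1}$ with the stated bilinear bound (this uses the product law of the unnamed lemma just before Proposition \ref{stokesprop}, with $s_1=\f2p$, $p_1=p$, $s_2=\f2q-1$, $p_2=q$, noting $s_1+s_2=\f2p+\f2q-1>0$ iff $\f1q-\f1p<\f12+\text{something}$ — more simply $\f2p+\f2q>1$ since $p,q<$ the relevant threshold). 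Note $\|\na w\|_{B^{\f2p}_{p,1}}\simeq \|w\|_{B^{1+\f2p}_{p,1}}=f(t)$ and similarly for $v$.

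Next comes the weight. Writing $a_\la=a\,e^{-\la\int_0^t f}$, equation \eqref{pp:loc} becomes, after multiplying by the (time-dependent, space-independent) factor $e^{-\la\int_0^t f}$,
\beq\label{pp:weight}
\pa_t\Delta_j a_\la + (v+w)\cdot\na\Delta_j a_\la + \la f(t)\Delta_j a_\la = e^{-\la\int_0^t f}R_j.
\eeq
The $L^q$ energy estimate on \eqref{pp:weight} gives
$\f{d}{dt}\|\Delta_j a_\la\|_{L^q} + \la f(t)\|\Delta_j a_\la\|_{L^q}\le e^{-\la\int_0^t f}\|R_j\|_{L^q}$.
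Multiplying by $2^{j\f2q}$, summing over $j\in\Z$ against the $\ell^1$ norm, integrating in time, and using the commutator bound above (with the weight $e^{-\la\int_0^t f}$ pulled inside, so that the $f(t)\|a\|$ term becomes $f(t)\|a_\la\|$ and the $\|\na v\|\,\|a\|$ term becomes $\|v(t)\|_{B^{1+\f2p}_{p,1}}\|a_\la(t)\|$ controlled by $\|a_\la\|_{\wt L^\infty_t}$) yields
\beno
\|a_\la\|_{\wt L^\infty_t(B^{\f2q}_{q,1})} + \la\|a_\la\|_{L^1_{t,f}(B^{\f2q}_{q,1})} \le \|a_0\|_{B^{\f2q}_{q,1}} + C\|a_\la\|_{L^1_{t,f}(B^{\f2q}_{q,1})} + C\|v\|_{L^1_t(B^{1+\f2p}_{p,1})}\|a_\la\|_{\wt L^\infty_t(B^{\f2q}_{q,1})}.
\eeno
Choosing $\la$ large enough (say $\la\ge 2C$) absorbs the $C\|a_\la\|_{L^1_{t,f}}$ term into the left, leaving the factor $\f\la2$, which is \eqref{aest}. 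The only genuine subtlety here is the bookkeeping of the weight: one must check that in the $\ell^1(\Z)$-summed commutator estimate the generic sequence $(d_j)$ and the coefficient $\|a(t)\|_{B^{\f2q}_{q,1}}$ are evaluated at the same time $t$ as the weight, so that $e^{-\la\int_0^t f(t')dt'}\|a(t)\|_{B^{\f2q}_{q,1}}=\|a_\la(t)\|_{B^{\f2q}_{q,1}}$ — which is exactly true because the weight is a scalar function of $t$ alone and commutes with every $\Delta_j$.

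For existence and uniqueness of the solution $a\in\cC([0,T];B^{\f2q}_{q,1})$, since $v+w\in L^1_T(B^{1+\f2p}_{p,1})\hookrightarrow L^1_T(\mathrm{Lip})$ (by Bernstein, as $1+\f2p>1+\f2p-1=\f2p\ge 0$ and $B^{1+\f2p}_{p,1}\hookrightarrow B^1_{\infty,1}\hookrightarrow W^{1,\infty}$ in $\R^2$ for $p<\infty$), this is the classical transport theory in Besov spaces of \cite{BCD}: the flow of $v+w$ is well defined and bi-Lipschitz, $a=a_0\circ(\text{flow})^{-1}$ is the unique solution, and the a priori estimate \eqref{aest} together with a standard approximation/compactness argument (or the linear estimate directly) gives the $B^{\f2q}_{q,1}$ persistence and time continuity. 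I expect the main obstacle to be none too deep — it is the careful handling of the commutator term under the lowest-regularity index constraint $\f1q-\f1p\le\f12$, making sure the endpoint product law still applies with the $\ell^1$ summability intact; everything else is a routine weighted energy estimate.
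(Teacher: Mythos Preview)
Your proposal is correct and follows essentially the same approach as the paper: weighted $L^q$ energy estimate on the localized transport equation, Bony/commutator bounds on $(v+w)\cdot\nabla a_\lambda$ that separate the $v$-contribution (controlled by $\|v\|_{L^1_t(B^{1+2/p}_{p,1})}\|a_\lambda\|_{\wt L^\infty_t}$) from the $w$-contribution (controlled by $\|a_\lambda\|_{L^1_{t,f}}$), and absorption of the latter by choosing $\la\ge 2C$. The only cosmetic difference is that the paper carries out the Bony decomposition explicitly---in particular it treats the paraproduct $T_{\nabla a_\lambda}(v+w)$ by hand and distinguishes the cases $\f1q-\f1p<\f12$ and $\f1q-\f1p=\f12$---whereas you invoke a packaged commutator lemma; your aside about the product law (with $s_2=\f2q-1$) is not quite the right justification for the index constraint, which in the paper arises from the Bernstein embedding $L^q\hookrightarrow L^r$, $\f1r=\f1q-\f1p$, used on $S_{j'-1}\nabla a_\lambda$.
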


\begin{proof} As both the existence and uniqueness of solution to \eqref{transeq}
basically follows from \eqref{aest}. For simplicity, we just present
the {\it a priori} estimate  \eqref{aest} for smooth enough
solutions of \eqref{transeq}. Indeed
 thanks to \eqref{transeq}, we have \beqo \pa_t
a_{\la} + \lam f(t)a_{\la}  + (v+w)\cdot \na a_{\lam}= 0. \eeqo
Applying $\D_j$ to the above equation and taking the $L^2$ inner
product of the resulting equation with $|\D_j a_\la|^{q-2}\D_j
a_{\lam}$ (in the case when $q\in (1,2),$ we need a small
modification to make this argument rigorous, which we omit here), we
obtain \beq \label{3.1} \frac1q\frac{d}{dt}\|\D_j
a_{\lam}(t)\|_{L^q}^q + \lam f(t)\|\D_j a_{\lam}(t)\|_{L^q}^q +
\bigl(\D_j((v+w)\cdot \na a_{\lam})\ \bigl|\ |\D_j
a_{\lam}|^{q-2}\D_ja_{\lam}\bigr)=0. \eeq Applying Bony's
decomposition to $(v+w)\cdot \na a_{\lam}$ gives rise to \beqo
(v+w)\cdot \na a_{\lam} = T_{(v+w)} \na a_{\lam} + T_{\na
a_\la}(v+w)+R((v+w),\na a_{\lam}). \eeqo One gets by using a
standard commutator argument that \beno
\begin{split}
\bigl(&\D_j(T_{(v+w)} \na a_{\lam})\ \bigl|\ |\D_j
a_{\lam}|^{q-2}\D_j a_{\lam}\bigr)\\
& = \sum_{|j-j'|\leq 5}\Bigl\{ \bigl([\D_j ;S_{j'-1}(v+w)]\D_{j'}\na
a_{\lam}\ \bigl|\ |\D_j
a_{\lam}|^{q-2}\D_j a_{\lam}\bigr)\\
&\quad+ \bigl((S_{j'-1}(v+w)-S_{j-1}(v+w))\D_j\D_{j'}\na a_{\lam}\
\bigl|\ |\D_j a_{\lam}|^{q-2}\D_j a_{\lam}\bigr)\Bigr\}\\
&\quad+\bigl(S_{j-1}(v+w)\na\D_j a_{\lam}\ \bigl|\ |\D_j
a_{\lam}|^{q-2}\D_j a_{\lam}\bigr),
\end{split} \eeno
 as $\dive v=\dive w=0,$ the last term equals $0,$
 from which and \eqref{3.1}, we infer
\beq\label{3.2}
\begin{split}
&\|\D_ja_\la (t)\|_{L^q} + \la \int_0^tf(\tau)\|\D_ja_\la (\tau)\|_{L^q}d\tau \\
&\leq\|\D_ja_0\|_{L^q}+C\Bigl\{\sum_{|j-j'|\leq 5}\bigl(\|[\D_j ;S_{j'-1}(v+w)]\D_{j'}\na a_{\lam}\|_{L^1_t(L^q)} \\
&\quad+\|(S_{j'-1}(v+w)-S_{j-1}(v+w))\D_j\D_{j'}\na a_{\lam}\|_{L^1_t(L^q)}\bigr)\\
&\quad+\|T_{\na a_\la}(v+w)\|_{L^1_t(L^q)}+\|R((v+w),\na
a_{\lam})\|_{L^1_t(L^q)}\Bigr\}.
\end{split}
\eeq We first get by applying the classical estimate on commutator
(see \cite{BCD} for instance) and Definition \ref{defpz} that \beno
\begin{split}
&\sum_{|j-j'|\leq 5}\|[\D_j ;S_{j'-1}(v+w)]\D_{j'}\na a_{\lam}\|_{L^1_t(L^q)} \\
&\lesssim \sum_{|j-j'|\leq 5}\bigl(\|\na S_{j'-1}v\|_{L^1_t(L^\infty)}\|\D_{j'}a_{\lam}\|_{L^\infty(L^q)} +\int_0^t\|\na S_{j'-1}w(\tau)\|_{L^\infty}\|\D_{j'}a_{\lam}(\tau)\|_{L^q}d\tau\bigr)\\
&\lesssim\sum_{|j-j'|\leq 5}\bigl(d_{j'}2^{-j'\f2q}\normBpo{v}{1+\f2p}\normBq{a_\la}{\f2q}+\int_0^t\normbp{w(\tau)}{1+\f2p}\|\D_{j'}a_{\lam}(\tau)\|_{L^q}d\tau\bigr)\\
&\lesssim d_j
2^{-j\f2q}\bigl(\normBpo{v}{1+\f2p}\normBq{a_\la}{\f2q}+\|a_\la\|_{L^1_{t,f}(B^{\f2q}_{q,1})}\bigr).
\end{split}
\eeno  Applying Lemma \ref{bernlem} leads to  \beno
\begin{split}
&\sum_{|j-j'|\leq 5}\|(S_{j'-1}(v+w)-S_{j-1}(v+w))\D_j\D_{j'}\na a_{\lam}\|_{L^1_t(L^q)}\\
&\lesssim \sum_{|j-j'|\leq 5}\big(\|S_{j'-1}\na v-S_{j-1}\na v\|_{L^1_t(L^\infty)}\|\D_{j}a_{\lam}\|_{L^\infty(L^q)}\\
&\qquad+\int_0^t\|(S_{j'-1}\na w-S_{j-1}\na w)(\tau)\|_{L^\infty}\|\D_{j'}a_{\lam}(\tau)\|_{L^q}d\tau\bigr)\\
& \lesssim d_j 2^{-j\f2q}\normBpo{v}{1+\f2p}\normBq{a_\la}{\f2q}+\sum_{|j-j'|\leq 5}\int_0^t\normbp{w(\tau)}{1+\f2p}\|\D_{j'}a_{\lam}(\tau)\|_{L^q}d\tau\\
& \lesssim d_j
2^{-j\f2q}\bigl(\normBpo{v}{1+\f2p}\normBq{a_\la}{\f2q}+\|a_\la\|_{L^1_{t,f}(B^{\f2q}_{q,1})}\bigr).
\end{split}
\eeno
On the other hand, as $q \leq p$, let $r$ be determined by $\f1r = \f1q -\f1p$. Then we get that
\beno
\begin{split}
\|T_{\na a_\la}(v+w)\|_{L^1_t(L^q)} \lesssim &\sum_{|j-j'|\leq 5} \bigl(\|S_{j'-1}\na a_\la\|_{L_t^\infty(L^r)}\|\D_{j'}v\|_{L^1_t(L^p)}\\
&+\int_0^t\|S_{j'-1}\na a_\la(\tau)\|_{L^r}\|\D_{j'}w(\tau)\|_{L^p}d\tau\bigr),
\end{split}
\eeno
now as $\f1q-\f1p\leq \f12$ , one has \beno
\begin{split}
&\|S_{j'-1}\na a_\la\|_{L_t^\infty(L^r)} \lesssim \sum_{\ell \leq j'-2}2^{\ell(1+\f2p)}\|\D_\ell a_\la\|_{L_t^\infty(L^q)}\\
&\lesssim \sum_{l \leq j'-2}d_\ell
2^{\ell(1+\f2p-\f2q)}\normBq{a_\la}{\f2q} \lesssim
2^{j'(1+\f2p-\f2q)}\normBq{a_\la}{\f2q}.
\end{split}
\eeno Applying Lemma \ref{bernlem} and Definition \ref{defpz} once
again gives, if $\f1q-\f1p<\f12$ \beno
\begin{split}
\sum_{|j-j'|\leq 5}&\int_0^t\|S_{j'-1}\na a_\la(\tau)\|_{L^r}\|\D_{j'}w(\tau)\|_{L^p}d\tau\\
&\lesssim 2^{-j(1+\f2p)}\sum_{|j-j'|\leq 5}\sum_{\ell\leq j'-2}2^{\ell(1+\f2p)}\int_0^t\|\D_\ell a_\la(\tau)\|_{L^q}\normbp{w(\tau)}{1+\f2p}d\tau\\
&\lesssim 2^{-j(1+\f2p)}\sum_{\ell\leq j+3}d_\ell2^{\ell(1+\f2p-\f2q)}\|a_\la\|_{L^1_{t,f}(B^{\f2q}_{q,1})}\\
&\lesssim d_j 2^{-j\f2q}\|a_\la\|_{L^1_{t,f}(B^{\f2q}_{q,1})}.
\end{split}
\eeno
In the case when $\f1q-\f1p=\f12$, we have
\beno
\begin{split}
\sum_{j\in\mathbb{Z}}2^{j\f2q}&\sum_{|j-j'|\leq 5}\int_0^t\|S_{j'-1}\na a_\la(\tau)\|_{L^r}\|\D_{j'}w(\tau)\|_{L^p}d\tau\\
&\lesssim\sum_{j\in\mathbb{Z}}\sum_{\ell\leq j+3}2^{\ell(1+\f2p)}\int_0^td_j(\tau)\|\D_\ell a_\la(\tau)\|_{L^q}\normbp{w(\tau)}{1+\f2p}d\tau\\
&\lesssim \sum_{\ell\leq
j+3}d_\ell\|a_\la\|_{L^1_{t,f}(B^{\f2q}_{q,1})}\lesssim\|a_\la\|_{L^1_{t,f}(B^{\f2q}_{q,1})}.
\end{split}
\eeno As a consequence, we obtain \beno \|T_{\na
a_\la}(v+w)\|_{L^1_t(L^q)} \lesssim
d_j2^{-j\f2q}(\normBpo{v}{1+\f2p}\normBq{a_\la}{\f2q}+\|a_\la\|_{L^1_{t,f}(B^{\f2q}_{q,1})}).
\eeno For the last term in \eqref{3.2}, we deduce from Lemma
\ref{bernlem} that \beno
\begin{split}
\|R(v+w,\na a_{\lam})\|_{L^1_t(L^q)} \lesssim& 2^{\f{2j}{p}}\sum\limits_{j' \geq j-N_0}\bigl(\|\D_{j'}v\|_{L^1_t(L^p)}\|\tilde{\D}_{j'}\na a_\la\|_{L_t^\infty(L^q)}\\
&\qquad\qquad\quad+\int_0^t\|\D_{j'}w(\tau)\|_{L^p}\|\tilde{\D}_{j'}\na a_\la(\tau)\|_{L^q}d\tau\bigr)\\
\lesssim& 2^{\f{2j}{p}}\sum\limits_{j' \geq j-N_0}\bigl(d_{j'}2^{-j'(\f2p +\f2q)}\normBpo{v}{1+\f2p}\normBq{a_\la}{\f2q}\\
&\qquad\qquad\quad+2^{-j'\f2p}\int_0^t\normbp{w(\tau)}{1+\f2p}\|\tilde{\D}_{j'} a_\la(\tau)\|_{L^q}d\tau\bigr)\\
\lesssim& 2^{\f{2j}{p}}\sum\limits_{j' \geq j-N_0}d_{j'}2^{-j'(\f2p +\f2q)}\bigl(\normBpo{v}{1+\f2p}\normBq{a_\la}{\f2q} + \|a_\la\|_{L^1_{t,f}(B^{\f2q}_{q,1})}\bigr)\\
\lesssim& d_j2^{-j\f2q}\bigl(\normBpo{v}{1+\f2p}\normBq{a_\la}{\f2q}
+ \|a_\la\|_{L^1_{t,f}(B^{\f2q}_{q,1})}\bigr).
\end{split}
\eeno Substituting the above estimates into \eqref{3.2} and taking
summation for $j\in\mathbb{Z}$, we arrive at \beno
\normBq{a_\la}{\f2q} +\la\|a_\la\|_{L^1_{t,f}(B^{\f2q}_{q,1})}\leq
\normbq{a_0}{\f2q} + C\bigl(\normBpo{v}{1+\f2p}\normBq{a_\la}{\f2q}
+\|a_\la\|_{L^1_{t,f}(B^{\f2q}_{q,1})}\bigr). \eeno Taking $\la \geq
2C$ in the above inequality, we conclude the proof of \eqref{aest}.
\end{proof}

\subsection{Estimates of the pressure function}
In this subsection, we aim at providing the {\it a priori} estimate
for $\na \Pi_1$ determined by \eqref{veq}. We  first get by taking
$\dive$ to the momentum equation of \eqref{veq} that \beq
\label{presseq}
\begin{split}
-\tri \Pi_1 = &\dive(a\na \Pi_1) + \dive F - \dive(v\cdot \na v + w\cdot \na v + v\cdot \na w)\\
&+ \dive[(1+a)\dive\big((\tilde{\mu}(a)-\mu)\cM(v)\big)]
+\mu\dive(a\tri v),
\end{split}
\eeq where $F$ is given by \eqref{veq}.

\begin{prop}\label{pressprop}
{\sl Let $1 < q \leq p < 4$. Let $a \in
\wt{L}_T^\infty(B^{\f2q}_{q,1})$,
 $w,v \in L^1_T(B^{1+\f2p}_{p,1})\cap \widetilde{L}_T^\infty(B^{-1+\f2p}_{p,1})$ and $\na p \in L^1_T(B^{-1+\f2p}_{p,1})$.
For $\la_1,\la_2>0,$ we denote \beq\label{prop2.3}
\begin{split}
&f_1(t)\eqdefa\normbp{w(t)}{1+\f2p}+\f{1}{\mu}\normbp{\na
p(t)}{-1+\f2p},\quad
  f_2(t)\eqdefa\normbp{w(t)}{\f2p}^2\quad\mbox{ and }\\
  &\Pi_{\bar{\la}}\eqdefa \Pi_1\exp\Bigl\{-\la_1\int_0^tf_1(\tau)d\tau
 -\la_2\int_0^tf_2(\tau)d\tau\Bigr\},
 \end{split}
 \eeq
and similar notations for $a_{\bar{\la}}$ and $v_{\bar{\la}}$. Then
\eqref{presseq} has a unique solution $\na \Pi_1 \in
L^1_T(B^{-1+\f2p}_{p,1})$ so that for any $ \ep > 0,$ there holds
\beq \label{pressest}
\begin{split}
\normBpo{\na \Pi_{\bar{\la}}}{-1+\f2p} \leq &
\f{C}{1-C\normBq{a}{\f2q}} \Bigl\{\ep \normBpo{v_{\bar{\la}}}{1+\f2p} + \normBp{v}{-1+\f2p}\normBpo{v_{\bar{\la}}}{1+\f2p}\\
& +\|v_{\bar{\la}}\|_{L^1_{t,f_1}(B^{-1+\f2p}_{p,1})}+ \f{1}{\ep} \|v_{\bar{\la}}\|_{L^1_{t,f_2}(B^{-1+\f2p}_{p,1})}\\
& + \bigl(
\mu+\frak{C}(1+\normBq{a}{\f2q})\bigr)\bigl(\|a_{\bar{\la}}\|_{L^1_{t,f_1}(B^{\f2q}_{q,1})}+
\normBq{a}{\f2q}\normBpo{v_{\bar{\la}}}{1+\f2p}\bigr)\Bigr\}
\end{split}
\eeq provided that $C\normBq{a}{\f2q} \leq \f12,$  where
$\|v_{\bar{\la}}\|_{L^1_{t,f}(B^{-1+\f2p}_{p,1})}$ is given by
Definition \ref{defpz} and the positive constant $\frak{C}$ depends
on $\|\wt{\mu}'\|_{L^\infty(-1,1)}.$}
\end{prop}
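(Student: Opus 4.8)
\textbf{Proof proposal for Proposition \ref{pressprop}.}

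The plan is to view \eqref{presseq} as an elliptic equation for $\Pi_1$ of the form $-\tri\Pi_1 = \dive(a\na\Pi_1) + \dive G$, where $G$ collects all the remaining source terms, and to absorb the $\dive(a\na\Pi_1)$ contribution on the left by the smallness hypothesis $C\normBq{a}{\f2q}\leq\f12$. First I would multiply the equation through by the weight $\exp\{-\la_1\int_0^t f_1-\la_2\int_0^t f_2\}$; since this weight depends only on time it commutes with all spatial operators, so $\Pi_{\bar\la}$ solves the same elliptic equation with $a$ unchanged and with $v,w,\na p$ replaced by their weighted counterparts $v_{\bar\la}$ etc. Applying $\na(-\tri)^{-1}\dive$, which is a homogeneous Fourier multiplier of order $0$ and hence bounded on every $B^{s}_{p,1}(\R^2)$, and then taking the $L^1_T(B^{-1+\f2p}_{p,1})$ norm, I get
\beno
\normBpo{\na\Pi_{\bar\la}}{-1+\f2p} \leq C\|a\na\Pi_{\bar\la}\|_{L^1_T(B^{-1+\f2p}_{p,1})} + C\|G_{\bar\la}\|_{L^1_T(B^{-1+\f2p}_{p,1})}.
\eeno
For the first term, the product law (the Lemma preceding Proposition \ref{stokesprop}, with $p_1=p_2=p$, $s_1=\f2q$ — using $q\le p$ so $B^{\f2q}_{q,1}\hookrightarrow B^{\f2p-\f2q+\f2q}_{p,1}$... more precisely one uses $a\in B^{\f2q}_{q,1}\subset\cM(B^{-1+\f2p}_{p,1})$ via the multiplier embedding valid for $p<4$) gives $\|a\na\Pi_{\bar\la}\|_{B^{-1+\f2p}_{p,1}}\lesssim \normbq{a}{\f2q}\|\na\Pi_{\bar\la}\|_{B^{-1+\f2p}_{p,1}}$, which under $C\normBq{a}{\f2q}\le\f12$ can be moved to the left-hand side, producing the prefactor $\f{C}{1-C\normBq{a}{\f2q}}$.

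Next I would estimate $\|G_{\bar\la}\|_{L^1_T(B^{-1+\f2p}_{p,1})}$ term by term, where $G$ consists of: $F$ itself (which expands, via \eqref{veq}, into $(1+a)\dive[(\wt\mu(a)-\mu)\cM(w)] + \mu a\tri w - a\na p$); the transport-type terms $v\cdot\na v + w\cdot\na v + v\cdot\na w$; the viscosity term $(1+a)\dive((\wt\mu(a)-\mu)\cM(v))$; and $\mu a\tri v$. The bilinear terms $v\cdot\na v$ etc.\ are handled by the product law: writing them in divergence form (using $\dive v=\dive w=0$) and applying the Lemma preceding Proposition \ref{stokesprop} with the two factors split between $\widetilde L^\infty_T(B^{-1+\f2p}_{p,1})$ and $L^1_T(B^{1+\f2p}_{p,1})$ (here the constraint $p<4$ enters to keep the resulting regularity index admissible), yielding contributions $\normBp{v}{-1+\f2p}\normBpo{v_{\bar\la}}{1+\f2p}$ and, for the $w$-factors, $\|v_{\bar\la}\|_{L^1_{t,f_1}(B^{-1+\f2p}_{p,1})}$ after recognizing $\normbp{w}{1+\f2p}\le f_1(t)$; the terms with a single $w$-derivative and a $v$ in low regularity produce, by a Young-type splitting $\|w\|_{B^{\f2p}}\|v\|_{B^{\f2p}}\lesssim \ep\normbp{v}{1+\f2p}+\f1\ep f_2(t)\normbp{v}{-1+\f2p}$ (interpolation plus $2ab\le\ep a^2+\ep^{-1}b^2$), exactly the $\ep\normBpo{v_{\bar\la}}{1+\f2p}+\f1\ep\|v_{\bar\la}\|_{L^1_{t,f_2}(B^{-1+\f2p}_{p,1})}$ terms. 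The density-dependent terms $\mu a\tri v$, $(1+a)\dive((\wt\mu(a)-\mu)\cM(v))$, $\mu a\tri w$, $a\na p$ are all of the shape $(\hbox{function of }a)\cdot(\hbox{derivative of }v\hbox{ or }w\hbox{ or }\na p)$; the product law gives $\normBq{a}{\f2q}\normBpo{v_{\bar\la}}{1+\f2p}$ for the $v$-terms and $\|a_{\bar\la}\|_{L^1_{t,f_1}(B^{\f2q}_{q,1})}$ for the $w$- and $\na p$-terms (again recognizing $\normbp{w}{1+\f2p}+\f1\mu\normbp{\na p}{-1+\f2p}\le f_1(t)$), with the factor $(1+\normBq{a}{\f2q})$ coming from the $(1+a)$ prefactor and $\frak C$ from estimating $\|\wt\mu(a)-\mu\|_{B^{\f2q}_{q,1}}\lesssim\|\wt\mu'\|_{L^\infty(-1,1)}\normBq{a}{\f2q}$ via the composition estimate for Besov spaces. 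Collecting all these and using the weight to turn the various $f_i$-weighted time integrals into the norms in Definition \ref{defpz} gives \eqref{pressest}.

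The main obstacle I anticipate is the bookkeeping of regularity indices in the product estimates, in particular making sure every application of the Lemma before Proposition \ref{stokesprop} has its hypotheses $s_1\le\f2{p_1}$, $s_2\le\f2{p_2}$, $s_1+s_2>0$ satisfied — this is precisely where $p<4$ (so that $-1+\f2p>-\f12$ and the low-regularity index stays in the valid range) and $q\le p$, $\f1q-\f1p\le\f12$ are needed — and handling the term $\dive[(1+a)\dive((\wt\mu(a)-\mu)\cM(v))]$, which carries two derivatives and a triple product, so one must commute a derivative through carefully and use that $\wt\mu(a)-\mu$ is as regular as $a$. The weight argument is routine once one notes the weight is spatially constant; the only subtlety is that the weighted norms are the ones in Definition \ref{defpz}, so the identity $g(t)\exp\{-\la\int_0^t g\} = \hbox{(weighted version of }g)$ must be matched against $f_1,f_2$ exactly, which dictates the particular combinations appearing in \eqref{prop2.3}. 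Uniqueness of $\na\Pi_1$ in $L^1_T(B^{-1+\f2p}_{p,1})$ follows from the same fixed-point/absorption argument applied to the difference of two solutions, the $\dive(a\na\cdot)$ operator being a contraction by the smallness of $\normBq{a}{\f2q}$.
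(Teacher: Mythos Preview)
Your proposal is correct and follows essentially the same route as the paper. The paper packages the estimate of $v_{\bar\la}\cdot\na w+w\cdot\na v_{\bar\la}$ into a separate Lemma (proved via Bony's decomposition: the paraproducts $T_vw$ and $R(v,w)$ produce the $\|v_{\bar\la}\|_{L^1_{t,f_1}}$ contribution, while $T_wv$ gives the $\ep$--split term through Cauchy--Schwarz in $t$), and likewise isolates the bound on $F_{\bar\la}$ as another Lemma, but your inline descriptions of these two pieces match their content exactly; the remaining product estimates, the absorption of $a\na\Pi_{\bar\la}$, and the weight argument are identical.
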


The proof of this proposition will mainly be based on the following lemmas:

\begin{lem}\label{presslem1}
{\sl Under the assumptions of Proposition \ref{pressprop}, one has
for any $\ep>0$ \beno \normBpo{v\cdot\na w+w\cdot \na
v}{-1+\f2p}\lesssim \ep
\normBpo{v}{1+\f2p}+\|v\|_{L^1_{t,f_1}(B^{-1+\f2p}_{p,1})} +
\f{1}{\ep} \|v\|_{L^1_{t,f_2}(B^{-1+\f2p}_{p,1})}. \eeno}
\end{lem}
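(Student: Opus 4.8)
The plan is to estimate the bilinear terms $v\cdot\nabla w$ and $w\cdot\nabla v$ separately, using Bony's decomposition in each case, and to exploit the fact that $w$ is a free Navier--Stokes solution so that its high regularity norm $\|w(t)\|_{B^{1+2/p}_{p,1}}$ (controlling $f_1$) and its low regularity norm $\|w(t)\|_{B^{2/p}_{p,1}}^2$ (controlling $f_2$) are both at our disposal as time weights. The key algebraic point is that the target space is $B^{-1+2/p}_{p,1}(\R^2)$, which has negative regularity index for $p\in(1,4)$; since $2/p$ and $1+2/p$ are the natural "positive" indices of $v$ and $w$, the product law (the unnumbered product lemma just before Proposition~2.1) applies at the right thresholds.

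First I would treat $w\cdot\nabla v$. Writing $w\cdot\nabla v = T_w\nabla v + T_{\nabla v}w + R(w,\nabla v)$, I would put $w$ in $L^\infty$-type norms at regularity $2/p$ (or $1+2/p$ when a derivative must be absorbed) and $v$ at regularity $1+2/p$ or $-1+2/p$ accordingly. The paraproduct $T_w\nabla v$ and the remainder $R(w,\nabla v)$ are controlled by $\|w\|_{B^{2/p}_{p,1}}\|v\|_{B^{1+2/p}_{p,1}}$, which after inserting the time weight $\exp(-\lambda_1\int_0^t f_1 - \lambda_2\int_0^t f_2)$ and using that $\|w(t)\|_{B^{2/p}_{p,1}}\lesssim \|w(t)\|_{B^{1+2/p}_{p,1}} = f_1(t)$ (up to the $\nabla p$ term, which only helps) gives a contribution bounded by $\|v_{\bar\lambda}\|_{L^1_{t,f_1}(B^{-1+2/p}_{p,1})}$ after a Bernstein/summation argument; here one should be slightly careful and instead pair $\|w\|_{L^\infty_t(L^\infty)}$-type bounds with $\|v\|_{L^1_t(B^{1+2/p}_{p,1})}$, producing the $\varepsilon\|v\|_{L^1_t(B^{1+2/p}_{p,1})} + \frac1\varepsilon\|v\|_{L^1_{t,f_2}(B^{-1+2/p}_{p,1})}$ structure via the interpolation $\|w\|_{B^{2/p}_{p,1}}^2 = f_2$. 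The term $T_{\nabla v}w$ is the one where $v$ carries low regularity: here $\nabla v$ sits at regularity $-2+2/p<0$ in an $L^\infty$-type paraproduct against $w$ at $1+2/p$, giving again $\|v\|_{B^{-1+2/p}_{p,1}}\|w\|_{B^{1+2/p}_{p,1}}$, i.e.\ a $\|v_{\bar\lambda}\|_{L^1_{t,f_1}(B^{-1+2/p}_{p,1})}$ contribution. The term $v\cdot\nabla w$ is handled symmetrically, the only difference being which factor receives the gradient; one splits $\nabla w$ at regularity $2/p$ and pairs it either with $v$ at $1+2/p$ (absorbed by $\varepsilon\|v\|_{L^1_t(B^{1+2/p}_{p,1})}$ using Young's inequality against $f_2 = \|w\|_{B^{2/p}_{p,1}}^2$) or with $v$ at $-1+2/p$ (the $f_1$-weighted term).

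The core mechanism throughout is the elementary inequality $ab \le \varepsilon a^2 + \frac{1}{4\varepsilon} b^2$ applied in the form $\|w(t)\|_{B^{2/p}_{p,1}}\|v(t)\|_{B^{1+2/p}_{p,1}} \le \varepsilon \|v(t)\|_{B^{1+2/p}_{p,1}}$ is not quite right dimensionally, so more precisely: in those dyadic blocks where both $v$ and $w$ contribute "high" frequencies one writes $\|w\|\,\|v\|_{B^{1+2/p}} \le \varepsilon\|v\|_{B^{1+2/p}} + \frac1\varepsilon \|w\|^2 \|v\|_{B^{-1+2/p}}$ by splitting off a factor $\|v\|_{B^{1+2/p}}^{1/2}\|v\|_{B^{-1+2/p}}^{1/2}$-type interpolation, which reproduces the $f_2$-weighted term. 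I expect the main obstacle to be bookkeeping: one must verify at each of the (at most six) paraproduct/remainder pieces that the relevant sum of regularity indices is $>0$ so that the product lemma applies for $p\in(1,4)$ with the gradient counted, and that the dyadic summability constants $(d_j)$ assemble correctly after the time integration; no single estimate is deep, but getting the $\varepsilon$ versus $\frac1\varepsilon$ split to land on exactly the $f_1$- and $f_2$-weighted norms as stated requires care in choosing which factor of $w$ to measure in $L^\infty$ and which in $B^{2/p}_{p,1}$. Once all pieces are collected and summed in $j$, the claimed bound follows.
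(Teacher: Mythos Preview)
Your plan is essentially correct and identifies the right mechanism---Bony's decomposition together with a dyadic Cauchy--Schwarz/interpolation that produces the $\varepsilon$ versus $\frac{1}{\varepsilon}$ split. The route, however, is more laborious than the paper's. The paper first uses $\dive v=\dive w=0$ to write $v\cdot\nabla w+w\cdot\nabla v=\dive(v\otimes w+w\otimes v)$ and thereby reduces the whole estimate to
\[
\|v\cdot\nabla w+w\cdot\nabla v\|_{L^1_t(B^{-1+2/p}_{p,1})}\lesssim \|vw\|_{L^1_t(B^{2/p}_{p,1})},
\]
so that only \emph{one} Bony decomposition (three pieces) is needed rather than two (six pieces). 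After this reduction the allocation is clean: the pieces $T_vw$ and $R(v,w)$ are bounded directly by $\int_0^t\|v\|_{B^{-1+2/p}_{p,1}}\|w\|_{B^{1+2/p}_{p,1}}\,d\tau$, i.e.\ the $f_1$-weighted term, while the single piece $T_wv$ is the one requiring the Cauchy--Schwarz trick $\|w\|_{B^{2/p}_{p,1}}\|\Delta_k v\|_{L^p}=(2^{-k}\|w\|_{B^{2/p}_{p,1}}^2\|\Delta_k v\|_{L^p})^{1/2}(2^k\|\Delta_k v\|_{L^p})^{1/2}$, which after Young's inequality yields exactly $\varepsilon\|v\|_{L^1_t(B^{1+2/p}_{p,1})}+\frac{1}{\varepsilon}\|v\|_{L^1_{t,f_2}(B^{-1+2/p}_{p,1})}$.

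Your version would arrive at the same endpoint, but you should be aware of two small wobbles in the write-up: first, the bound you quote for $T_w\nabla v$ and $R(w,\nabla v)$ as $\|w\|_{B^{2/p}_{p,1}}\|v\|_{B^{1+2/p}_{p,1}}$ is not directly one of the three target terms---what you actually need there (and what you eventually talk yourself into) is the interpolation $\|v\|_{B^{2/p}_{p,1}}\lesssim\|v\|_{B^{-1+2/p}_{p,1}}^{1/2}\|v\|_{B^{1+2/p}_{p,1}}^{1/2}$ followed by Young; second, by not using the divergence-free reduction you double the bookkeeping for no gain. I recommend adopting the paper's first step.
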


\begin{proof}
As $\dive v =\dive w =0$, we have \beno \normBpo{v\cdot\na w+w\cdot
\na v}{-1+\f2p} \lesssim \normBpo{vw}{\f2p}. \eeno While we get by
applying Bony's decomposition that \beno vw= T_vw + T_wv + R(v,w).
\eeno Notice that applying Lemma \ref{bernlem} leads to \beno
\begin{split}
\normop{\D_j( T_vw)} &\lesssim \int_0^t \sum_{|k-j|\leq 5}\normsup{S_{k-1}v}\normp{\D_k w}\,d\tau\\
&\lesssim \int_0^t d_j(\tau)
2^{-j(1+\f2p)}\normbp{w(\tau)}{1+\f2p}\sum_{\ell
\leq j}2^{\f2p \ell}\normp{\D_\ell v(\tau)}\,d\tau\\
& \lesssim d_j 2^{-j\f2p}\int_0^t
\normbp{v(\tau)}{-1+\f2p}\normbp{w(\tau)}{1+\f2p}\,d\tau,
\end{split}
\eeno and \beno
\begin{split}
\normop{\D_j( R(v,w))} & \lesssim  2^{j\f2p}\int_0^t \sum_{k\geq j-N_0}\normp{\wt{\D}_k v}\normp{\D_k w}\,d\tau\\
&\lesssim 2^{j\f2p}\int_0^t \sum_{k\geq j-N_0}d_k(\tau)
2^{-k\f4p}\normbp{v(\tau)}{-1+\f2p}\normbp{w(\tau)}{1+\f2p}\,d\tau\\
& \lesssim  d_j
2^{-j\f2p}\int_0^t\normbp{v(\tau)}{-1+\f2p}\normbp{w(\tau)}{1+\f2p}\,d\tau.
\end{split}
\eeno It follows from the same line that \beno
\begin{split}
\normop{\D_j (T_wv)}
&\lesssim \int_0^t \sum_{|k-j|\leq 5}\normsup{S_{k-1}w}\normp{\D_k v}\,d\tau
 \lesssim \sum_{|k-j|\leq 5}\int_0^t \normbp{w}{\f2p}\normp{\D_k v}\,d\tau\\
&\lesssim \Bigl(\sum_{|k-j|\leq 5}\int_0^t2^{-k}\normbp{w}{\f2p}^2\normp{\D_k v}\,d\tau\Bigr)^{\f12}
\Bigl(\sum_{|k-j|\leq 5}\int_0^t2^k\normp{\D_k v}\,d\tau\Bigr)^{\f12}\\
&\lesssim d_j 2^{-j\f2p}\Bigl(\f{1}{\ep}
\int_0^t\normbp{w(\tau)}{\f2p}^2 \normbp{v(\tau)}{-1+\f2p}\,d\tau +
\ep \int_0^t \normbp{v(\tau)}{1+\f2p}\,d\tau\Bigr).
\end{split}
\eeno The above estimates together with Definition \ref{defpz} prove
the lemma.
\end{proof}

\begin{lem}\label{presslem2}
{\sl Let $F$ be determined by \eqref{veq}. Then under the
assumptions of Proposition \ref{pressprop}, one has \beno
\normBpo{F}{-1+\f2p}\leq \bigl(C
\mu+\frak{C}(1+\normBq{a}{\f2q})\bigr)\|a\|_{L^1_{t,f_1}(B^{\f2q}_{q,1})}.
\eeno for some positive constant $\frak{C}$ depending on
$\|\wt{\mu}'\|_{L^\infty(-1,1)}.$}
\end{lem}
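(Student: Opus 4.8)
The plan is to recall the explicit form of $F$ from \eqref{veq}, namely
$$F=(1+a)\dive[(\tilde{\mu}(a)-\mu)\cM(w)] + \mu a\tri w -a\na p,$$
and to estimate each of the three pieces separately in $\wt L^1_t(B^{-1+\f2p}_{p,1})$, weighting everything by $\exp\{-\la_1\int_0^tf_1-\la_2\int_0^tf_2\}$ exactly as in the statement of Proposition \ref{pressprop}. The two terms $\mu a\tri w$ and $-a\na p$ are the easy ones: since $\tri w$ lives in $B^{-1+\f2p}_{p,1}$ with norm controlled by $\normbp{w}{1+\f2p}$, and $\na p$ is directly part of $f_1$, I would apply the product law (the unnumbered Lemma between Lemma \ref{heatkernel} and Proposition \ref{stokesprop}, with $p_1=p$, $p_2=q$, $s_1=-1+\f2p$, $s_2=\f2q$, noting $s_1+s_2=\f2p+\f2q-1>0$ since $p<4$ forces $\f2p>\f12$ and $q\le p$ gives $\f2q\ge\f2p>\f12$) to get
$$\|a\tri w\|_{B^{-1+\f2p}_{p,1}}\lesssim \|a\|_{B^{\f2q}_{q,1}}\normbp{w}{1+\f2p},\qquad \|a\na p\|_{B^{-1+\f2p}_{p,1}}\lesssim \|a\|_{B^{\f2q}_{q,1}}\normbp{\na p}{-1+\f2p}.$$
After multiplying by the weight, the $w$- and $\na p$-factors assemble into $\mu f_1(t)$ (recall $\mu=\wt\mu(0)$), so time integration produces the $\mu\,\|a_{\bar\la}\|_{L^1_{t,f_1}(B^{\f2q}_{q,1})}$ contribution; this is where the $C\mu$ in the statement comes from, and it accounts for the $\|\tilde\D_j a\|$ appearing with the $\wt L^\infty_t$ norm of $a$.

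The term $(1+a)\dive[(\tilde{\mu}(a)-\mu)\cM(w)]$ is the genuinely delicate one, and I expect it to be the main obstacle. First I would use that $\dive$ maps $B^{\f2p}_{p,1}\to B^{-1+\f2p}_{p,1}$ and the product law $\cM(B^{\f2q}_{q,1})$ acting on $B^{\f2p}_{p,1}$ (or the unnumbered product lemma again, now with $s_1=\f2p$, $s_2=\f2q$) to reduce matters to estimating $(1+a)(\tilde{\mu}(a)-\mu)\cM(w)$ in $B^{\f2p}_{p,1}$. The factor $\cM(w)$ is bounded in $B^{\f2p}_{p,1}$ by $\normbp{w}{1+\f2p}$, which is again part of $f_1$. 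The heart of the matter is the composition estimate for $\tilde\mu(a)-\mu=\tilde\mu(a)-\tilde\mu(0)$: since $\tilde\mu$ is smooth and $a\in B^{\f2q}_{q,1}\hookrightarrow L^\infty$ with values in a bounded interval (here is where $\|\wt\mu'\|_{L^\infty(-1,1)}$ enters, giving the constant $\frak C$), the standard Besov composition estimate yields
$$\|\tilde\mu(a)-\tilde\mu(0)\|_{B^{\f2q}_{q,1}}\lesssim \|\wt\mu'\|_{L^\infty(-1,1)}\,\|a\|_{B^{\f2q}_{q,1}},$$
and similarly $\|a(\tilde\mu(a)-\tilde\mu(0))\|_{B^{\f2q}_{q,1}}\lesssim (1+\|a\|_{B^{\f2q}_{q,1}})\frak C\,\|a\|_{B^{\f2q}_{q,1}}$ after one more application of the product law. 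This is precisely the source of the $\frak C(1+\normBq{a}{\f2q})$ factor. Care is needed because $B^{\f2q}_{q,1}$ is critical (not sub-critical), so I would invoke the version of the composition lemma valid at the critical index — it still holds in the $\ell^1$-summability scale $B^s_{p,1}$ — rather than the generic $B^s_{p,r}$ statement.

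Finally I would collect the three bounds, multiply through by the exponential weight and use that the weight commutes with all the Littlewood--Paley and Bony operators up to the elementary inequality $\exp\{-\la\int_0^t f\}\le \exp\{-\la\int_\tau^t f\}$ for $\tau\le t$ (the same device already used in Proposition \ref{aprop}), so that in each term one factor $\normbp{w(\tau)}{1+\f2p}$ or $\normbp{\na p(\tau)}{-1+\f2p}$ turns the time integral into $\|a_{\bar\la}\|_{L^1_{t,f_1}(B^{\f2q}_{q,1})}$ while the remaining $a$-factor is estimated in $\wt L^\infty_t(B^{\f2q}_{q,1})$ and reappears as the $\normBq{a}{\f2q}$ in front. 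Summing the dyadic pieces in $\ell^1(\Z)$ — which is legitimate precisely because we work with $\ell^1$-Besov spaces and every estimate above carries a summable sequence $(d_j)$ — gives the claimed inequality
$$\normBpo{F}{-1+\f2p}\leq \bigl(C\mu+\frak{C}(1+\normBq{a}{\f2q})\bigr)\|a_{\bar\la}\|_{L^1_{t,f_1}(B^{\f2q}_{q,1})}.$$
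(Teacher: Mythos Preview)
Your plan---split $F$ into its three summands, apply the Besov product law (using $q\le p$ and $\tfrac1p+\tfrac1q>\tfrac12$), and handle $\tilde\mu(a)-\tilde\mu(0)$ via the composition estimate---is exactly what the paper does. The paper simply bounds each piece pointwise in time and integrates, obtaining
\[
\normBpo{\mu a\tri w - a\na p}{-1+\f2p}\lesssim \mu\,\|a\|_{L^1_{t,f_1}(B^{\f2q}_{q,1})}
\]
and
\[
\normBpo{(1+a)\dive[(\tilde\mu(a)-\mu)\cM(w)]}{-1+\f2p}\le\frak{C}\bigl(1+\normBq{a}{\f2q}\bigr)\|a\|_{L^1_{t,f_1}(B^{\f2q}_{q,1})}.
\]

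Two remarks. First, the lemma as stated carries \emph{no} exponential weight: it bounds $\normBpo{F}{-1+\f2p}$ in terms of $\|a\|_{L^1_{t,f_1}(B^{\f2q}_{q,1})}$ with unweighted $a$; the weights $\bar\la$ enter only afterwards, in the proof of Proposition~\ref{pressprop}. Your final displayed inequality mixes the two---unweighted $F$ on the left, weighted $a_{\bar\la}$ on the right---and as written is actually false (take $a$ constant in time and $\int_0^t f_1$ large: the left side grows without bound while the right stays $\le C/\la_1$). Either put $F_{\bar\la}$ on the left or drop the weight entirely; the latter is what the lemma asks for and makes the argument shorter. Second, the closing remark about summing dyadic pieces in $\ell^1(\Z)$ is superfluous here: the product lemma already delivers Besov-norm bounds directly, so no Littlewood--Paley decomposition is needed at this stage.
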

\begin{proof}
Note that $q\leq p$ and $\f1q+\f1p>\f12$,  we deduce by  the product
laws in Besov space that \beno
\begin{split}
\normBpo{\mu a\tri w -a\na p}{-1+\f2p}\lesssim&\mu\int_0^t\normbq{a}{\f2q}(\normbp{w}{1+\f2p}+\f{1}{\mu}\normbp{\na p}{-1+\f2p})\,d\tau\\
\lesssim&\mu\|a\|_{L^1_{t,f_1}(B^{\f2q}_{q,1})}.
\end{split}
\eeno Along the same line, we get that \beno
\begin{split}
\normbp{(1+a)\dive[(\tilde{\mu}(a)-\mu)\cM(w)]}{-1+\f2p}&\lesssim (1+\normbq{a}{\f2q})\normbp{(\tilde{\mu}(a)-\mu)\cM(w)}{\f2p}\\
&\lesssim (1+\normbq{a}{\f2q})\normbq{\tilde{\mu}(a)-\mu}{\f2q}\normbp{\cM(w)}{\f2p}\\
&\leq
\frak{C}(1+\normbq{a}{\f2q})\normbq{a}{\f2q}\normbp{w}{1+\f2p},
\end{split}
\eeno for some positive constant $\frak{C}$ depending on
$\|\wt{\mu}'\|_{L^\infty(-1,1)}$ as long as $\|a\|_{L^\infty}\leq
1.$ This gives rise to \beno
\begin{split}
\normBpo{(1+a)\dive[(\tilde{\mu}(a)-\mu)\cM(w)]}{-1+\f2p}& \leq
\frak{C}(1+\normBq{a}{\f2q})\int_0^t\normbq{a}{\f2q}\normbp{w}{1+\f2p}d\tau\\
&\leq \frak{C}
(1+\normBq{a}{\f2q})\|a\|_{L^1_{t,f_1}(B^{\f2q}_{q,1})}.
\end{split}
\eeno This finishes the proof of Lemma \ref{presslem2}.
\end{proof}

Now let us turn to the proof of Proposition \ref{pressprop}.\\

\noindent\textbf{Proof of Proposition \ref{pressprop}.} As both the
existence and uniqueness parts of Proposition \ref{pressprop}
basically follows from the uniform estimate \eqref{pressest} for
appropriate approximate solutions of  \eqref{presseq}. For
simplicity, we just prove \eqref{pressest} for smooth enough
solutions of \eqref{presseq}. Indeed thanks to \eqref{presseq}, we
have \beno
\begin{split}
\na \Pi_{\bar{\la}} = \na (-\tri)^{-1}\Bigl(\dive(a\na \Pi_{\bar{\la}}) + \dive F_{\bar{\la}}
- \dive(v\cdot \na v_{\bar{\la}} + v_{\bar{\la}}\cdot \na w + w\cdot \na v_{\bar{\la}})\\
+\dive \big((1+a)\dive((\wt{\mu}(a)-\mu)\cM(v_{\bar{\la}}))\big) +
\mu\dive(a\tri v_{\bar{\la}})\Bigr),
\end{split}
\eeno from which, we deduce that \beq\label{2.9}
\begin{split}
\normBpo{\na \Pi_{\bar{\la}}}{-1+\f2p} \leq & C\Bigl\{\normBpo{a\na \Pi_{\bar{\la}}}{-1+\f2p}+\normBpo{F_{\bar{\la}}}{-1+\f2p}
+\normBpo{v\cdot \na v_{\bar{\la}}}{-1+\f2p}\\
&+\normBpo{v_{\bar{\la}}\cdot \na w + w\cdot \na v_{\bar{\la}}}{-1+\f2p}+\mu\normBpo{a\tri v_{\bar{\la}}}{-1+\f2p}\\
&+\normBpo{(1+a)\dive((\wt{\mu}(a)-\mu)\cM(v_{\bar{\la}}))}{-1+\f2p}\Bigr\}.
\end{split}
\eeq However as $q \leq p$ and $\f1q+\f1p>\f12$, applying standard
product laws in Besov space leads to \beno
\begin{split}
&\normBpo{v\cdot\na v_{\bar{\la}}}{-1+\f2p}\lesssim \normBp{v}{-1+\f2p}\normBpo{v_{\bar{\la}}}{1+\f2p},\\
&\normBpo{a\na \Pi_{\bar{\la}}}{-1+\f2p} \lesssim \normBq{a}{\f2q}\normBpo{\na \Pi_{\bar{\la}}}{-1+\f2p},\\
&\mu\normBpo{a\tri v_{\bar{\la}}}{-1+\f2p} \lesssim \mu\normBq{a}{\f2q}\normBpo{v_{\bar{\la}}}{1+\f2p},\\
&\normBpo{(1+a)\dive((\wt{\mu}(a)-\mu)\cM(v_{\bar{\la}}))}{-1+\f2p}\leq
\frak{C} (1+
\normBq{a}{\f2q})\normBq{a}{\f2q}\normBpo{v_{\bar{\la}}}{1+\f2p},
\end{split}
\eeno for some positive constant $\frak{C}$ depending on
$\|\wt{\mu}'\|_{L^\infty(-1,1)}$ as long as $\|a\|_{L^\infty}\leq
1.$ This along with Lemmas \ref{presslem1} to \ref{presslem2}
implies Proposition \ref{pressprop} provided that $C\normBq{a}{\f2q}
\leq \f12$. \ef

\medskip

\renewcommand{\theequation}{\thesection.\arabic{equation}}
\setcounter{equation}{0}
\section{The global infinite energy solutions to classical 2-D Navier-Stokes system}

In this section, we shall solve the global wellposedness of the
classical Navier-Stokes system \eqref{weq} with initial data $u_0\in
B^{-1+\frac2p}_{p,1}(\R^2)$ for $1< p <4,$ which is not of finite
energy. In general, the global wellposedness to 2-D classical
Navier-Stokes system with initial data in the scaling invariant
Besov spaces and of infinite energy was solved in
\cite{gallplan,ger}. However considering the special structure of $
B^{-1+\frac2p}_{p,1}(\R^2)$ for $1< p <4,$ we shall provide a much
simpler proof than that in \cite{gallplan,ger}, furthermore,
 more detailed information to this solution will be given here. More precisely, we shall split the solution
 $w$ to  \eqref{weq} as
$w_L+\bar{w}$ with $w_L \eqdefa e^{\mu t\tri}u_0.$ Then it follows
from  \eqref{weq} and Lemma \ref{heatkernel} that \beq\label{wLest}
\normBp{w_L}{-1+\f2p} + \mu\normBpo{w_L}{1+\f2p} \leq
C\normbp{u_0}{-1+\f2p}, \eeq and $\bar{w}$ solves \beq\label{wbareq}
\left\{\begin{array}{l}
\pa_t \bar{w} + \bar{w}\cdot \na \bar{w} + \bar{w}\cdot \na w_L + w_L\cdot \na \bar{w}+ w_L \cdot \na w_L- \mu\tri\bar{w}+\na p=0,\\
\dive \bar{w}=0,\\
\bar{w}|_{t=0} = 0.
\end{array}\right.
\eeq

The main result of this section is as follows:

\begin{prop}\label{prop3.1}
{\sl Given solenoidal vector filed  $u_0\in
B^{-1+\frac2p}_{p,1}(\R^2)$ for $p\in (1,4),$ \eqref{weq} has a
unique solution $w$ of the form: $w_L+\bar{w},$ with $\bar{w}\in
\cC([0,\infty);B^{0}_{2,1}(\R^2))\cap \wt{L}^\infty(\R^+;
B^{0}_{2,1}(\R^2))\cap L^1(\R^+;B^{2}_{2,1}(\R^2)),$ and there holds
\beq\label{west}
\begin{split}
\normBp{w&}{-1+\f2p} +\mu\normBpo{w}{1+\f2p} +\normBpo{\na
p}{-1+\f2p}\\ \leq&
C\normbp{u_0}{-1+\f2p}(1+\normbp{u_0}{-1+\f2p})\exp\Bigl\{\f{C}{\mu^2}\normbp{u_0}{-1+\f2p}^2\Bigr\}.
\end{split}
\eeq}
\end{prop}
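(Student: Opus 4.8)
\textbf{Proof proposal for Proposition \ref{prop3.1}.}
The plan is to construct $\bar w$ solving \eqref{wbareq} by a Friedrichs-type approximation scheme and close the estimates in the energy space $\wt L^\infty_T(B^0_{2,1})\cap L^1_T(B^2_{2,1})$, then recover the full estimate \eqref{west} for $w=w_L+\bar w$ in $B^{-1+\f2p}_{p,1}$. First, since $u_0\in B^{-1+\f2p}_{p,1}(\R^2)$ with $p<4$, the embedding $B^{-1+\f2p}_{p,1}(\R^2)\hookrightarrow B^{-1}_{\infty,1}(\R^2)$ together with the smoothing of the heat semigroup (Lemma \ref{heatkernel}) gives $w_L=e^{\mu t\tri}u_0\in L^1(\R^+;B^1_{\infty,1})$, and more precisely $\mu\normBpo{w_L}{1+\f2p}+\normBp{w_L}{-1+\f2p}\le C\normbp{u_0}{-1+\f2p}$, which is \eqref{wLest}. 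The key point that makes this proof simpler than \cite{gallplan,ger} is that for $p\in(1,4)$ the source term $w_L\cdot\na w_L$ in \eqref{wbareq} lands in $L^1_T(B^0_{2,1})$: indeed $B^{-1+\f2p}_{p,1}\cdot B^{\f2p}_{p,1}\hookrightarrow B^{-1+\f2p}_{p,1}$ and, since $-1+\f2p>-\f12$ and $\f2p>\f12$ for $p<4$, one gets $\nabla(w_L\otimes w_L)\in L^1_T(B^{-1+\f4p-\f2p}_{p,1})\hookrightarrow L^1_T(B^0_{2,1})$ by Bernstein (Lemma \ref{bernlem}) because $p\le 2$ is not needed — the gain $\f2p-\f22$ in the Bernstein exponent when passing from $L^p$ to $L^2$ exactly compensates. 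Quantitatively, using \eqref{wLest},
\beno
\normBo{w_L\cdot\na w_L}{0}\lesssim \normBp{w_L}{-1+\f2p}\normBpo{w_L}{1+\f2p}\lesssim \f1\mu\normbp{u_0}{-1+\f2p}^2 .
\eeno

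Next I set up the iteration $\bar w^{n+1}\mapsto$ solution of the linear Stokes system with right-hand side $-\bar w^n\cdot\na\bar w^n-\bar w^n\cdot\na w_L-w_L\cdot\na\bar w^n-w_L\cdot\na w_L$ and apply Proposition \ref{stokesprop} with $s=0$, $p=2$, $r=1$, $\nu=\mu$. Introducing the weight $g(t)\eqdefa\f1{\mu^2}\normbp{w_L(t)}{\f2p}^2$ (so that $\int_0^\infty g\,dt\lesssim\f1{\mu^3}\normbp{u_0}{-1+\f2p}^2$ after using $B^{-1+\f2p}_{p,1}\hookrightarrow\dH^{-1+\f2p}$... — more simply, $\int_0^T\normbp{w_L}{\f2p}^2dt\lesssim\normBp{w_L}{-1+\f2p}\normBpo{w_L}{1+\f2p}\lesssim\f1\mu\normbp{u_0}{-1+\f2p}^2$), I estimate the linear terms: $\normBo{w_L\cdot\na\bar w+\bar w\cdot\na w_L}{0}\lesssim\normBp{\bar w}{0}\normBpo{w_L}{1+\f2p}+\ep\normBpo{\bar w}{2}+\f1\ep\int_0^T\normbp{w_L}{\f2p}^2\normBp{\bar w}{0}\,dt$, exactly as in Lemma \ref{presslem1} (with $v\leftrightarrow\bar w$, $w\leftrightarrow w_L$ and $B^0_{2,1}$ in place of $B^{-1+\f2p}_{p,1}$), so that choosing $\ep$ small enough to absorb $\ep\normBpo{\bar w}{2}$ into the left-hand side and then applying a Gronwall-type argument in the weighted space $L^1_{T,f}(B^0_{2,1})$ of Definition \ref{defpz}, the linear-in-$\bar w$ contributions are controlled by a factor $\exp\{C\mu^{-2}\normbp{u_0}{-1+\f2p}^2\}$. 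The quadratic term $\bar w\cdot\na\bar w$ is handled by the $2$-D product law $\normBo{\bar w\cdot\na\bar w}{0}\lesssim\normBp{\bar w}{0}\normBpo{\bar w}{2}$ and absorbed for small data; since the source $w_L\cdot\na w_L$ is not small, one runs the standard continuation argument: let $T^*$ be the maximal time on which $X(T)\eqdefa\normBp{\bar w}{0}+\mu\normBpo{\bar w}{2}$ stays below a fixed multiple of $\f1\mu\normbp{u_0}{-1+\f2p}^2\exp\{C\mu^{-2}\normbp{u_0}{-1+\f2p}^2\}$, and show the a priori bound improves the constant on $[0,T^*]$, whence $T^*=\infty$.

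Having obtained $\bar w\in\wt L^\infty(\R^+;B^0_{2,1})\cap L^1(\R^+;B^2_{2,1})$ with
\beno
\normBp{\bar w}{0}+\mu\normBpo{\bar w}{2}\lesssim\f1\mu\normbp{u_0}{-1+\f2p}^2\exp\Bigl\{\f C{\mu^2}\normbp{u_0}{-1+\f2p}^2\Bigr\},
\eeno
I then transfer to the $B^{-1+\f2p}_{p,1}$-norm via the embeddings $B^0_{2,1}(\R^2)\hookrightarrow B^{-1+\f2p}_{p,1}(\R^2)$ and $B^2_{2,1}(\R^2)\hookrightarrow B^{1+\f2p}_{p,1}(\R^2)$ (valid since $2\le p<4$ gives $0-\f22\ge -1+\f2p-\f2p$, i.e. the Sobolev index is nonincreasing; for $1<p<2$ one instead keeps working directly in $B^{-1+\f2p}_{p,1}$, running the same scheme there — in fact the cleanest route is to do the whole iteration in $B^{-1+\f2p}_{p,1}$ for all $p\in(1,4)$ and only note the additional $B^0_{2,1}$ regularity at the end for $p\ge 2$). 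Adding $w_L$ back and using \eqref{wLest}, and recovering $\na p$ from the Stokes estimate $\normBpo{\na p}{-1+\f2p}\lesssim\normbp{u_0}{-1+\f2p}+\normBo{(\text{all nonlinear terms})}{-1+\f2p}$, gives \eqref{west} after collecting the $\exp\{C\mu^{-2}\normbp{u_0}{-1+\f2p}^2\}$ factors and the polynomial prefactor $(1+\normbp{u_0}{-1+\f2p})$ coming from the source term $w_L\cdot\na w_L$ combined with the linear terms. Uniqueness follows by taking the difference of two solutions $w,w'$, writing $\delta=w-w'$ which solves a linear Stokes system with right-hand side $-\delta\cdot\na w-w'\cdot\na\delta$, and estimating in $L^1_T(B^0_{2,1})$ (or $B^{-1+\f2p}_{p,1}$) via Proposition \ref{stokesprop} and Gronwall on a small enough time interval, then propagating. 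The main obstacle is the bookkeeping of the two competing smallness/largeness mechanisms — the quadratic self-interaction $\bar w\cdot\na\bar w$ is small only via a continuation argument, while the linear terms involving $w_L$ must be absorbed through the $\ep$-splitting of Lemma \ref{presslem1} and a weighted Gronwall inequality — so that the final constant depends on $\normbp{u_0}{-1+\f2p}$ only through $\exp\{C\mu^{-2}\normbp{u_0}{-1+\f2p}^2\}$ and a polynomial factor, with no worse dependence.
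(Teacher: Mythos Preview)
Your treatment of the linear-in-$\bar w$ terms and of the source $w_L\cdot\na w_L$ is essentially what the paper does (your Lemmas are the paper's Lemmas \ref{wbarlem1} and \ref{wbarlem2}). The genuine gap is in your handling of the quadratic term $\bar w\cdot\na\bar w$. You propose to control it by the product law
\[
\normBo{\bar w\cdot\na\bar w}{0}\lesssim \normB{\bar w}{0}\,\normBo{\bar w}{2}
\]
and then close via a continuation argument on $X(T)=\normB{\bar w}{0}+\mu\normBo{\bar w}{2}$. But after absorbing the linear terms by the $\ep$-splitting and Gronwall, the inequality you obtain has the shape
\[
X(T)\ \lesssim\ \exp\Bigl\{\f{C}{\mu^2}\normbp{u_0}{-1+\f2p}^2\Bigr\}\Bigl(\f1\mu\normbp{u_0}{-1+\f2p}^2+\f1\mu X(T)^2\Bigr),
\]
and this bootstrap closes only if the target bound $M$ satisfies $M\ll \mu\,\exp\{-C\mu^{-2}\normbp{u_0}{-1+\f2p}^2\}$, whereas the source term already forces $M\gtrsim \mu^{-1}\normbp{u_0}{-1+\f2p}^2\exp\{C\mu^{-2}\normbp{u_0}{-1+\f2p}^2\}$. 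For large $u_0$ these are incompatible, so the continuation argument does not close.

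The missing idea, which the paper supplies, is to run first a plain $L^2$ energy estimate on \eqref{wbareq}: since $\dive\bar w=0$, the term $(\bar w\cdot\na\bar w,\bar w)$ vanishes identically, and one obtains \emph{without any smallness} the bounds $\|\bar w\|_{L^\infty_t(L^2)}$ and $\|\na\bar w\|_{L^2_t(L^2)}$ (the paper's \eqref{wbarenergy1}--\eqref{wbarenergy}). These energy bounds are then fed into an interpolation estimate for the quadratic term,
\[
\normBo{\bar w\cdot\na\bar w}{0}\lesssim \int_0^t\|\bar w\|_{L^2}^{1/2}\|\na\bar w\|_{L^2}\,\normb{\bar w}{2}^{1/2}\,d\tau
\ \le\ \ep\,\normBo{\bar w}{2}+\f{C}{\ep}\,\|\bar w\|_{L^\infty_t(L^2)}\|\na\bar w\|_{L^2_t(L^2)}^2,
\]
so that the only occurrence of $\normBo{\bar w}{2}$ on the right is linear with a small coefficient and can be absorbed; the remaining term is already controlled. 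This is exactly the 2-D mechanism that replaces smallness, and it is what your argument is lacking. Once you insert this step, the rest of your outline (Stokes estimate, Gronwall in the weighted norm, then adding back $w_L$) goes through as in the paper. Your remark about the embedding $B^0_{2,1}\hookrightarrow B^{-1+\f2p}_{p,1}$ is fine for $p\ge 2$; for $1<p<2$ you cannot ``work directly in $B^{-1+\f2p}_{p,1}$'' for the same reason as above --- the quadratic term still requires the $L^2$ energy input.
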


We start the proof of Proposition \ref{prop3.1} by the following two
technical lemmas.

\begin{lem}\label{wbarlem1}
{\sl Let $p\in [1,\infty]$, $w_L\in
\wt{L}_t^\infty(B^{-1+\f2p}_{p,1})\cap L^1_t(B^{1+\f2p}_{p,1})$
 and $\bar{w}\in \wt{L}_t^\infty(B^0_{2,1})\cap L^1_t(B^2_{2,1})$ be divergence free vector fields, then for any $\ep>0$, one has
\beno \normBo{\bar{w}\cdot \na w_L + w_L\cdot \na \bar{w}}{0}
&\lesssim& \ep \normBo{\bar{w}}{2} +
\int_0^t\bigl(\normbp{w_L}{1+\f2p}+\f{1}{\ep}\normbp{w_L}{\f2p}^2\bigr)\normb{\bar{w}}{0}\,d\tau.
\eeno}
\end{lem}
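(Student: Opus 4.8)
The plan is to estimate $\bar w\cdot\na w_L+w_L\cdot\na\bar w$ in $L^1_t(B^0_{2,1})$ by first using the divergence-free condition to rewrite it as $\dive(\bar w\otimes w_L+w_L\otimes\bar w)$, so that it suffices to bound $\normBo{\bar w\otimes w_L}{1}$, i.e. to put the product $\bar w\,w_L$ in $B^1_{2,1}$. Then I would apply Bony's paraproduct decomposition $\bar w\,w_L = T_{\bar w}w_L+T_{w_L}\bar w+R(\bar w,w_L)$ and treat the three pieces separately, using Lemma \ref{bernlem} throughout and keeping track of the time integrability by hand (as in the proof of Proposition \ref{aprop}).

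For the paraproduct $T_{w_L}\bar w$ and the remainder $R(\bar w,w_L)$, the low-frequency factor is $w_L$ measured in $L^\infty$, so one is led to a bound of the form $\int_0^t \normsup{w_L}\normb{\bar w}{1}\,d\tau$; since $\bar w\in \wt L^\infty_t(B^0_{2,1})\cap L^1_t(B^2_{2,1})$, I would interpolate $\normb{\bar w}{1}\lesssim \normb{\bar w}{0}^{1/2}\normb{\bar w}{2}^{1/2}$ and use $\normsup{w_L}\lesssim \normbp{w_L}{\f2p}$ together with Young's inequality $ab\le \ep a^2+\f1\ep b^2$ to split off $\ep\normBo{\bar w}{2}$ and leave the term $\f1\ep\int_0^t\normbp{w_L}{\f2p}^2\normb{\bar w}{0}\,d\tau$. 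For $T_{\bar w}w_L$, the high-frequency factor is $w_L$, so here I would measure $w_L$ in $B^{1+\f2p}_{p,1}$ (gaining one derivative in $L^p$), put $\bar w$ in $L^{r}$ with $\f1r=\f12-\f1p$ via $B^0_{2,1}\hookrightarrow B^{\f2p-1}_{r,1}$ — legitimate since $p<4$ forces $\f2p-1>-\f12$ so $r>2$ — and obtain a contribution of the form $\int_0^t\normbp{w_L}{1+\f2p}\normb{\bar w}{0}\,d\tau$, which is absorbed into the second term on the right-hand side of the claimed estimate; a slight care is needed when $\f1r$ hits the endpoint, handled exactly as the $\f1q-\f1p=\f12$ case in Proposition \ref{aprop}. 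Summing the dyadic pieces against $\ell^1(\Z)$ and collecting the $(d_j)$ factors yields the stated inequality.

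The main obstacle I expect is bookkeeping the time integrability correctly: one cannot simply invoke the stationary product laws in $\wt L^\infty$–type spaces because the desired right-hand side has $w_L$ paired with $\bar w$ inside a single time integral (a genuine $L^1_{t}$ pairing, not a product of two Chemin–Lerner norms), so each paraproduct term has to be estimated "by hand" at the level of $\|\D_j(\cdot)\|_{L^1_t(L^2)}$ as in the proof of Proposition \ref{aprop}, and the Young-inequality split producing the parameter $\ep$ must be arranged before summing in $j$ so that $\ep\normBo{\bar w}{2}$ comes out cleanly. The embedding constraint $r>2$, which is precisely where the hypothesis $p<4$ (equivalently $p\in[1,\infty]$ here with the relevant range) enters, is the other point to watch; everything else is routine Littlewood–Paley manipulation.
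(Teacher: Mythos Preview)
Your overall strategy—divergence-free reduction to $\normBo{\bar w\,w_L}{1}$, Bony decomposition, dyadic estimates at the level of $\|\D_j(\cdot)\|_{L^1_t(L^2)}$, and the $\ep$-split via interpolation and Young—is sound and will go through. The grouping, however, differs from the paper's. You send both $T_{w_L}\bar w$ and $R(\bar w,w_L)$ through the bound $\|w_L\|_{L^\infty}\normb{\bar w}{1}$ and then interpolate/split, while reserving the direct $\int_0^t\normbp{w_L}{1+\f2p}\normb{\bar w}{0}\,d\tau$ treatment for $T_{\bar w}w_L$. The paper does the reverse: it handles $T_{\bar w}w_L$ and $R(\bar w,w_L)$ \emph{directly} (no interpolation) by placing $S_{k-1}\bar w$ in $L^2$ and $\D_k w_L$ in $L^\infty$ via Bernstein, obtaining $d_j2^{-j}\int_0^t\normbp{w_L}{1+\f2p}\normb{\bar w}{0}\,d\tau$ immediately; only $T_{w_L}\bar w$ receives the $\ep$-Young treatment.

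The paper's route for $T_{\bar w}w_L$ is both simpler and strictly more general than yours. Your $L^r$ detour with $\f1r=\f12-\f1p$ requires $p\ge 2$ for $r$ to exist at all, so as written your argument does not cover $p\in[1,2)$ even though the lemma is stated for $p\in[1,\infty]$. (Your remark that ``$p<4$ forces $\f2p-1>-\f12$ so $r>2$'' is off: $r>2$ holds whenever $\f1r<\f12$, which is automatic; the real constraint is $\f1r\ge 0$, i.e.\ $p\ge 2$. Also the Sobolev embedding reads $B^0_{2,1}\hookrightarrow B^{-\f2p}_{r,1}$, not $B^{\f2p-1}_{r,1}$.) Simply pairing $\|S_{k-1}\bar w\|_{L^2}\le\normb{\bar w}{0}$ with $\|\D_k w_L\|_{L^\infty}\lesssim 2^{\f{2k}p}\|\D_k w_L\|_{L^p}$, as the paper does, gives the desired $d_j2^{-j}$ factor for all $p$ with no extra hypothesis. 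Your treatment of $T_{w_L}\bar w$ and $R$ is fine as stated; the ``bookkeeping'' concern you flag is real, and both your approach and the paper's handle it by estimating $\|\D_j(\cdot)\|_{L^1_t(L^2)}$ frequency-by-frequency before summing.
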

\begin{proof} The proof of this lemma basically follows from that of
Lemma \ref{presslem1}. Note that $\dive \bar{w}=\dive w_L =0$, we
have \beno \normBo{\bar{w}\cdot \na w_L + w_L\cdot \na
\bar{w}}{0}\lesssim \normBo{\bar{w}w_L}{1}, \eeno and we get by
applying Bony's decomposition \beno \bar{w}w_L= T_{w_L}\bar{w} +
T_{\bar{w}}w_L + R(\bar{w},w_L). \eeno Applying Lemma \ref{bernlem}
yields \beno
\begin{split}
\normo{\D_j (T_{\bar{w}}w_L)} &\lesssim \int_0^t \sum_{|k-j|\leq 5}\normld{S_{k-1}\bar{w}}\normsup{\D_k w_L}\,d\tau\\
&\lesssim d_j 2^{-j}
\int_0^t\normbp{w_L}{1+\f2p}\normb{\bar{w}}{0}\,d\tau ,
\end{split}
\eeno and \beno
\begin{split}
\normo{\D_j (R(\bar{w},w_L))} &\lesssim \int_0^t 2^{j\f2p}\sum_{k\geq j-N_0}\normld{\wt{\D}_k \bar{w}}\normp{\D_k w_L}d\tau\\
&\lesssim \int_0^t 2^{j\f2p}\sum_{k\geq j-N_0}d_k(\tau)
2^{-k(1+\f2p)}\normbp{w_L}{1+\f2p}\normb{\bar{w}}{0}d\tau\\
& \lesssim d_j 2^{-j}\int_0^t
\normbp{w_L}{1+\f2p}\normb{\bar{w}}{0}d\tau.
\end{split}
\eeno Along the same line, one has \beno
\begin{split} \normo{\D_j (T_{w_L}\bar{w})}
&\lesssim \int_0^t \sum_{|k-j|\leq 5}\normsup{S_{k-1}w_L}\normld{\D_k \bar{w}}d\tau \lesssim \sum_{|k-j|\leq 5}
\int_0^t \normbp{w_L}{\f2p}\normld{\D_k \bar{w}}\,d\tau\\
&\lesssim \Bigl(\sum_{|k-j|\leq 5}\int_0^t2^{-k}\normbp{w_L}{\f2p}^2\normld{\D_k \bar{w}}\,d\tau\Bigr)^{\f12}
\Bigl(\sum_{|k-j|\leq 5}\int_0^t2^k\normld{\D_k \bar{w}}\,d\tau\Bigr)^{\f12}\\
&\lesssim  d_j 2^{-j}
\bigl(\f{1}{\ep}\int_0^t\normbp{w}{\f2p}^2\normb{\bar{w}}{0}\,d\tau
+ \ep \int_0^t \normb{\bar{w}}{2}d\tau\bigr).
\end{split}
\eeno By summing up the above estimates, we finish the  proof of
Lemma \ref{wbarlem1}.
\end{proof}

\begin{lem}\label{wbarlem2}
{\sl Let $p\in (1,4)$ and  $w_L$ be given  at the beginning of this
section,  one has \beq\label{wbarlem2af} \normBo{w_L\cdot \na
w_L}{0}\lesssim \f1{\mu}\|u_0\|_{B^{-1+\f2p}_{p,1}}^2. \eeq}
\end{lem}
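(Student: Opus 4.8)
The goal is to bound $\normBo{w_L\cdot\na w_L}{0}$, i.e. the $L^1_t(B^0_{2,1})$ norm of the bilinear term built from the free heat flow $w_L=e^{\mu t\tri}u_0$, by $\mu^{-1}\|u_0\|_{B^{-1+\f2p}_{p,1}}^2$. The plan is to follow the same Bony-decomposition strategy used in Lemmas \ref{presslem1} and \ref{wbarlem1}, but now exploiting that \emph{both} factors carry the smoothing of the heat semigroup, so that we never have to give up a full derivative. Since $\dive w_L=0$ we first write $w_L\cdot\na w_L=\dive(w_L\otimes w_L)$ and reduce to estimating $\normBo{w_L\otimes w_L}{1}$. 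Then decompose $w_L\otimes w_L = T_{w_L}w_L + T_{w_L}w_L + R(w_L,w_L)$ (two paraproducts and a remainder).

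For each piece I would apply $\D_j$, use Lemma \ref{bernlem} to convert $L^\infty$ norms of low-frequency blocks into $L^p$ norms (losing $2^{k\cdot\f2p}$), and then — this is the crucial point — use the heat-kernel decay from Lemma \ref{heatkernel}: for the dyadic block at frequency $2^k$ one has $\normp{\D_k w_L(\tau)}\lesssim e^{-c2^{2k}\mu\tau}\normp{\D_k u_0}$, hence $\normp{\D_k w_L(\tau)}\lesssim d_k\, 2^{-k(-1+\f2p)}e^{-c2^{2k}\mu\tau}\normbp{u_0}{-1+\f2p}$. In the paraproduct term, say $\normo{\D_j(T_{w_L}w_L)}$, the sum over $|k-j|\le 5$ paired with $S_{k-1}w_L$ produces, after using Lemma \ref{bernlem} on $S_{k-1}$, an integrand of the form $\int_0^t \sum_{|k-j|\le5} 2^{k\f2p}\big(\sum_{\ell\le k}2^{\ell\f2p}\normp{\D_\ell w_L}\big)\normp{\D_k w_L}\,d\tau$; inserting the two heat estimates gives a factor $e^{-c2^{2k}\mu\tau}$ whose time integral over $[0,\infty)$ is $\lesssim (\mu 2^{2k})^{-1}$, and the remaining $k,\ell$ geometric sums collapse (because of the spectral gap $\ell\le k$ and $p<4\Rightarrow -1+\tfrac2p>-\tfrac12$, so the $\ell$-sum converges) to leave $d_j 2^{-j}\cdot\mu^{-1}\|u_0\|_{B^{-1+\f2p}_{p,1}}^2$, which is exactly the $B^0_{2,1}$-summable bound after multiplying by $2^j$ and summing in $j$. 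The remainder term $R(w_L,w_L)$ is handled identically: there the frequencies $k\ge j-N_0$ are summed, one uses $\|\wt\D_k w_L\|_{L^2}$ together with Bernstein to pass from $L^p$ to $L^2$ at cost $2^{k\cdot 2(\f1p-\f12)}$, picks up $e^{-c2^{2k}\mu\tau}$ from each factor, integrates in time to gain $(\mu 2^{2k})^{-1}$, and the resulting $k$-sum converges and reconstructs $d_j 2^{-j}\mu^{-1}\|u_0\|_{B^{-1+\f2p}_{p,1}}^2$.

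The main obstacle — and the reason the restriction $p<4$ appears — is making the geometric series in the dyadic indices actually converge \emph{after} one has used the single power of $\mu^{-1}2^{-2k}$ from the time integration: one has to check that the exponents $-1+\tfrac2p$ (for the low-frequency/paraproduct sums) and $2(\tfrac1p-\tfrac12)$ together with the $2^{-2k}$ gain (for the remainder) leave a strictly negative power of $2$ to sum, which is precisely where $1<p<4$ is used, and also to verify that summing the time-integrated bound and the $\ell^1$ sequences $(d_j)$ is done in the right order so that only one factor of $\mu^{-1}$ and two factors of $\|u_0\|_{B^{-1+\f2p}_{p,1}}$ survive (rather than, say, $\mu^{-2}$). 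Once the bookkeeping of which block absorbs the heat decay is fixed, everything is routine; I would organize it as: (i) reduce to $\normBo{w_L\otimes w_L}{1}$ using $\dive w_L=0$; (ii) Bony decompose; (iii) for each of the three pieces estimate $\normo{\D_j(\cdot)}$ via Lemma \ref{bernlem} and Lemma \ref{heatkernel}, integrating the exponential in time to produce the factor $\mu^{-1}2^{-2k}$; (iv) sum over $j$ and conclude.
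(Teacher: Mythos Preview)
Your plan is correct and follows the same overall route as the paper: reduce to $\|w_L\otimes w_L\|_{L^1_t(B^1_{2,1})}$ via $\dive w_L=0$, Bony-decompose into $2T_{w_L}w_L+R(w_L,w_L)$, and estimate each piece with Lemma~\ref{bernlem}. The only noteworthy difference is in how the heat smoothing is cashed out. You propose to insert the pointwise decay $\|\D_k w_L(\tau)\|_{L^p}\lesssim e^{-c2^{2k}\mu\tau}\|\D_k u_0\|_{L^p}$ from Lemma~\ref{heatkernel} and integrate the exponential by hand to produce the factor $\mu^{-1}2^{-2k}$. The paper instead invokes the already-integrated estimate~\eqref{wLest}, placing one copy of $w_L$ in $\widetilde{L}^\infty_t(B^{-1+\f2p}_{p,1})$ and the other in $L^1_t(B^{1+\f2p}_{p,1})$ (which already carries the $\mu^{-1}$), so no explicit time integration is needed. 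The paper also organizes the H\"older pairing by a case split: for $p\in[2,4)$ it uses $L^{\f{2p}{p-2}}\times L^p$ on the paraproduct and Bernstein $L^{p/2}\to L^2$ on the remainder, while for $p\in(1,2)$ it uses $L^{p'}\times L^p$ followed by Bernstein $L^1\to L^2$. This split makes the role of the restriction $p<4$ immediate (it is exactly what allows $L^{p/2}\hookrightarrow L^2$ via Bernstein, respectively what makes $\f{2p}{p-2}$ a genuine Lebesgue exponent), whereas in your version the same restriction is hidden in the convergence of the dyadic sums. Both arguments yield the same bound; the paper's is a little shorter because~\eqref{wLest} has already done the time integration for you.
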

\begin{proof} Indeed due to $\dv w_L=0,$ one has
\beno \normBo{w_L\cdot \na w_L}{0}\lesssim \normBo{w_L\otimes
w_L}{1}, \eeno and thanks to Bony's decomposition, we get \beno
w_L\otimes w_L= 2T_{w_L} w_L + R(w_L, w_L). \eeno We first deal with
\eqref{wbarlem2af} for the case when $2\leq p<4.$ In this case,
 we have $p<\f{2p}{p-2}\leq \infty,$ so  that applying Lemma
\ref{bernlem} gives \beno
\begin{split}
\|S_{k-1}w_L\|_{L^\infty_t(L^{\f{2p}{p-2}})}\lesssim &\sum_{\ell\leq
k-2} 2^{2\ell(\f1p-\f{p-2}{2p})}\|\D_\ell w_L\|_{L^\infty_t(L^p)}\\
\lesssim & d_k
2^{\f{2k}p}\|w_L\|_{\wt{L}^\infty_t(B^{-1+\f2p}_{p,1})}\lesssim d_k
2^{\f{2k}p}\|u_0\|_{B^{-1+\f2p}_{p,1}},
\end{split}
\eeno where we used \eqref{wLest} in the last step. Then applying
Lemma \ref{bernlem} once again leads to \beq\label{wbarlem2ad}
\begin{split}
\normo{\D_j (T_{w_L} w_L)} &\lesssim  \sum_{|k-j|\leq 5}\|S_{k-1}w_L\|_{L^\infty_t(L^{\f{2p}{p-2}})}\|\D_k  w_L\|_{L^1_t(L^p)}\\
&\lesssim \f1{\mu}d_j2^{-j}\|u_0\|_{B^{-1+\f2p}_{p,1}}^2.
\end{split}
\eeq Similarly as $2\leq p<4,$ we get by  applying Lemma
\ref{bernlem} that \beno
\begin{split}
\normo{\D_j(R(w_L, w_L))} \lesssim&  2^{2j(\f2p-\f12)}\sum_{k\geq j-N_0}\|\wt{\D}_k w_L\|_{L^\infty_t(L^p)}\|\D_k  w_L\|_{L^1_t(L^p)}\\
&\lesssim  \f1{\mu} 2^{j(\f4p-1)}\sum_{k\geq j-N_0}d_k 2^{-k\f4p}\|u_0\|_{B^{-1+\f2p}_{p,1}}^2 \\
&\lesssim \f1{\mu}d_j2^{-j}\|u_0\|_{B^{-1+\f2p}_{p,1}}^2.
\end{split}
\eeno  This together with \eqref{wbarlem2ad} proves
\eqref{wbarlem2af} for $p\in [2,4).$

On the other hand, when $p\in (1,2),$ let $p'$ be determined by
$\f1{p'}=1-\f1p,$ we deduce from Lemma \ref{bernlem} that
 \beno
\begin{split}
\|S_{k-1}w_L\|_{L^\infty_t(L^{p'})}\lesssim &\sum_{\ell\leq
k-2} 2^{2\ell(\f2p-1)}\|\D_\ell w_L\|_{L^\infty_t(L^p)}\\
\lesssim & d_k
2^{k(\f{2}p-1)}\|w_L\|_{\wt{L}^\infty_t(B^{-1+\f2p}_{p,1})}\lesssim
d_k 2^{k(\f{2}p-1)}\|u_0\|_{B^{-1+\f2p}_{p,1}},
\end{split}
\eeno and \beq\label{wbarlem2ap}
\begin{split}
\normo{\D_j (T_{w_L} w_L)} &\lesssim  2^j\sum_{|k-j|\leq 5}\|S_{k-1}w_L\|_{L^\infty_t(L^{p'})}\|\D_k  w_L\|_{L^1_t(L^p)}\\
&\lesssim \f1{\mu}d_j2^{-j}\|u_0\|_{B^{-1+\f2p}_{p,1}}^2.
\end{split}
\eeq Along the same line, one has
 \beno
\begin{split}
\normo{\D_j(R(w_L, w_L))} \lesssim&  2^{j}\sum_{k\geq j-N_0}\|\wt{\D}_k w_L\|_{L^\infty_t(L^{p'})}\|\D_k  w_L\|_{L^1_t(L^p)}\\
&\lesssim  \f1{\mu} 2^{j}\sum_{k\geq j-N_0}d_k 2^{-2k}\|u_0\|_{B^{-1+\f2p}_{p,1}}^2 \\
&\lesssim \f1{\mu}d_j2^{-j}\|u_0\|_{B^{-1+\f2p}_{p,1}}^2.
\end{split}
\eeno  This together with \eqref{wbarlem2ap} ensures
\eqref{wbarlem2af} for $p\in (1,2).$
\end{proof}

With the above two technical lemmas, we now present the proof of
Proposition \ref{prop3.1}.\\

\no{\bf Proof of Proposition \ref{prop3.1}.}\ As the existence part
of Proposition \ref{prop3.1} essentially follows from \eqref{west}.
Again for simplicity, we just present the detailed proof to
\eqref{west} for smooth enough solutions of \eqref{weq}.  We first
get by taking the $L^2$ inner product of \eqref{wLest} with
$\bar{w}$ that \beno
\begin{split}
\f12\f{d}{dt}\normld{\bar{w}(t)}^2 + \mu\normld{\na \bar{w}(t)}^2
\leq& \normld{\bar{w}}^2\normsup{\na w_L} +
\normld{\bar{w}}\normld{w_L\cdot\na w_L}. \end{split} \eeno Applying
Gronwall's inequality and then using   \eqref{wLest}, Lemma
\ref{wbarlem2}, we get that \beq\label{wbarenergy1}
\begin{split}
\|\bar{w}\|_{L^\infty_t(L^2)}\leq& \normo{w_L\cdot\na
w_L}\exp\bigl\{\|\na w_L\|_{L^1_t(L^\infty)}\bigr\}\\
\leq&\f{C}{\mu}\normbp{u_0}{-1+\f2p}^2\exp\Bigl\{\f{C}{\mu}\normbp{u_0}{-1+\f2p}\Bigr\}\leq
C\normbp{u_0}{-1+\f2p}\exp\Bigl\{\f{C}{\mu}\normbp{u_0}{-1+\f2p}\Bigr\},
\end{split}
\eeq and \beq\label{wbarenergy}
\begin{split}
 \mu\|\na \bar{w}\|_{L^2_t(L^2)}^2
&\leq \|\bar{w}\|_{L^\infty_t(L^2)}^2\|\na w_L\|_{L^1_t(L^\infty)}+\|\bar{w}\|_{L^\infty_t(L^2)}\|w_L\cdot\na w_L\|_{L^1_t(L^2)}\\
&\leq\f{C}{\mu}\normbp{u_0}{-1+\f2p}^3\exp\Bigl\{\f{C}{\mu}\normbp{u_0}{-1+\f2p}\Bigr\}\\
&\leq
C\normbp{u_0}{-1+\f2p}^2\exp\Bigl\{\f{C}{\mu}\normbp{u_0}{-1+\f2p}\Bigr\}.
\end{split}
\eeq

On the other hand, we notice that \beno
\begin{split}
\normBo{\bar{w}\cdot\na\bar{w}}{0}\leq& C\int_0^t\normb{\bar{w}\cdot\na\bar{w}}{0}\,d\tau
\leq C\int_0^t \|\bar{w}\|_{\dot{H}^{\f12}}\|\na\bar{w}\|_{\dot{H}^{\f12}}\,d\tau\\
\leq& C\int_0^t
\normld{\bar{w}}^{\f12}\normld{\na\bar{w}}\normb{\bar{w}}{2}^{\f12}\,d\tau.
\end{split}
\eeno So that it follows from \eqref{wbareq}, Proposition
\ref{stokesprop} and Lemma \ref{wbarlem1} that \beno
\begin{split}
&\normB{\bar{w}}{0} + \mu\normBo{\bar{w}}{2} +\normBo{\na p}{0}\\
&\leq C\Bigl\{\normBo{\bar{w}\cdot\na\bar{w}}{0} + \normBo{\bar{w}\cdot\na w_L +w_L\cdot\na\bar{w}}{0}+\normBo{w_L\cdot\na w_L}{0}\Bigr\}\\
&\leq C\Bigl\{\ep\normBo{\bar{w}}{2} + \|\bar{w}\|_{L^\infty_t(L^2)}\|\na\bar{w}\|_{L^2_t(L^2)}^2 \\
&\qquad\quad+\int_0^t\bigl(\normbp{w_L}{1+\f2p}+\f{1}{\ep}\normbp{w_L}{\f2p}^2\bigr)\normb{\bar{w}}{0}\,d\tau
+\f{C}{\mu}\normbp{u_0}{-1+\f2p}^2\Bigr\}.
\end{split}\eeno Taking $\ep = \f{\mu}{2C}$ in the above inequality and using
\eqref{wLest}, \eqref{wbarenergy},  we infer \beq\label{wbarest}
\begin{split}
&\normB{\bar{w}}{0} + \mu\normBo{\bar{w}}{2} +\normBo{\na p}{0}\\
&\leq
C\exp\Bigl\{C\int_0^t(\normbp{w_L}{1+\f2p}+\f{1}{\mu}\normbp{w_L}{\f2p}^2)\,d\tau\Bigr\}\\
&\qquad\times\Bigl(\f{C}{\mu}\normbp{u_0}{-1+\f2p}^3\exp\bigl\{\f{C}{\mu}\normbp{u_0}{-1+\f2p}\bigr\}+\f{C}{\mu}\normbp{u_0}{-1+\f2p}^2\Bigr)\\
&\leq
C\normbp{u_0}{-1+\f2p}(1+\normbp{u_0}{-1+\f2p})\exp\Bigl\{\f{C}{\mu^2}\normbp{u_0}{-1+\f2p}^2\Bigr\}.
\end{split}
\eeq

Therefore, summing up \eqref{wLest} and \eqref{wbarest} results in
\beno
\begin{split}
\normBp{w}{-1+\f2p}& +\mu\normBpo{w}{1+\f2p} +\normBpo{\na p}{-1+\f2p}\\
\leq &\bigl(\normBp{w_L}{-1+\f2p} +\mu\normBpo{w_L}{1+\f2p}\bigr) \\
&+ \bigl(\normBp{\bar{w}}{-1+\f2p} +\mu\normBpo{\bar{w}}{1+\f2p} +\normBpo{\na p}{-1+\f2p}\bigr)\\
\leq&C\normbp{u_0}{-1+\f2p}(1+\normbp{u_0}{-1+\f2p})\exp\Bigl\{\f{C}{\mu^2}\normbp{u_0}{-1+\f2p}^2\Bigr\},
\end{split}
\eeno which gives rise to \eqref{west}. The uniqueness part of
Proposition \ref{prop3.1} has been proved in  \cite{gallplan,ger}.
This completes the proof of the proposition. \ef

\medskip

\renewcommand{\theequation}{\thesection.\arabic{equation}}
\setcounter{equation}{0}
\section{The proof of Theorem \ref{mainthm}}

The goal of this section is to present the proof of Theorem
\ref{mainthm}. In fact, given $a_0\in B^{\f2q}_{q,1}(\R^2)$, $u_0\in
B^{-1+\f2p}_{p,1}(\R^2)$ with $\normbq{a_0}{\f2q}$ being
sufficiently small and $p,q$ satisfying the conditions listed in
Theorem \ref{mainthm}, it follows by a similar argument as that in
\cite{abipai} that there exists a positive time $T$ so that
\eqref{INS1} has a unique solution $(a,u,\na \Pi)$ with
\begin{equation}\label{local}
\begin{split}
&a\in\cC([0,T];B^{\f2q}_{q,1}(\mathbb{R}^2)),\qquad u\in\cC([0,T];B^{-1+\f2p}_{p,1}(\mathbb{R}^2))\cap L^1((0,T);B^{1+\f2p}_{p,1}(\mathbb{R}^2)),\\
&\na\Pi\in L^1((0,T);B^{-1+\f2p}_{p,1}(\mathbb{R}^2)).
\end{split}
\end{equation} Moreover, if $\f1p+\f1q\geq 1,$
this solution is unique. We denote $T^*$ to be the largest possible
time so that there holds \eqref{local}. Hence the proof of Theorem
\ref{mainthm} is reduced to show  that $T^*=\infty$ under the
assumption of \eqref{thmassume}. Toward this, we split the velocity
$u$ as $w+v,$ with $(w,p), (a,v, \Pi_1)$ solving \eqref{weq} and
\eqref{veq} respectively. Then thanks to Proposition \ref{prop3.1},
it remains to solve \eqref{veq} globally.

\subsection{The estimate of $v$.} First we reformulate the $v$ equation
of  \eqref{veq} to be
\begin{equation}\label{veq1}
\begin{split}
\pa_t v - \mu\tri v =& F -(1+a)\na \Pi_1 + \mu a\tri v + (1+a) \dive[(\wt{\mu}(a)-\mu)\cM(v)]\\
&-(v\cdot \na v + v\cdot \na w + w\cdot \na v).
\end{split}
\end{equation}
Let $f_1(t)$, $f_2(t)$, $a_{\bar{\la}}$, $v_{\bar{\la}}$, $\na
\Pi_{\bar{\la}}$ be given by  \eqref{prop2.3}, and $a_{\la_1}\eqdefa
a\exp\bigl\{-\la_1\int_0^tf_1(\tau)d\tau\bigr\}$. Then  it follows
from  \eqref{veq1} that \beno
\begin{split}
\pa_t& v_{\bar{\la}}+ (\la_1f_1(t)+\la_2f_2(t))v_{\bar{\la}}-\mu\tri v_{\bar{\la}}= F_{\bar{\la}} -(1+a)\na\Pi_{\bar{\la}}+\mu a\tri v_{\bar{\la}} \\
&+(1+a) \dive[(\wt{\mu}(a)-\mu)\cM(v_{\bar{\la}})]-(v\cdot \na
v_{\bar{\la}} + v_{\bar{\la}}\cdot \na w + w\cdot \na
v_{\bar{\la}}).
\end{split}\eeno
Applying $\D_j$ to the above equation and taking the $L^2$ inner
product of the resulting equation with $|\D_j
v_{\bar{\la}}|^{p-2}\D_j v_{\bar{\la}} $ (in the case when $p\in
(1,2),$ we need a small modification to make this argument rigorous,
which we omit here), we obtain \beq\label{veqaf}
\begin{split}
\f1p\f{d}{dt}& \|\D_jv_{\bar{\la}}\|_{L^p}^p+
(\la_1f_1(t)+\la_2f_2(t))\|\D_jv_{\bar{\la}}\|_{L^p}^p
-\mu\bigl(\tri\D_j v_{\bar{\la}}\ |\ |\D_j v_{\bar{\la}}|^{p-2}\D_j
v_{\bar{\la}}\bigr)\\
\leq &\Bigl\{\|\D_j F_{\bar{\la}}\|_{L^p}
+\|\D_j((1+a)\na\Pi_{\bar{\la}})\|_{L^p}+\|\D_j((1+a)
\dive[(\wt{\mu}(a)-\mu)\cM(v_{\bar{\la}})])\|_{L^p}\\
&+\mu \|\D_j(a\tri v_{\bar{\la}})\|_{L^p} +\|\D_j(v\cdot \na
v_{\bar{\la}} + v_{\bar{\la}}\cdot \na w + w\cdot \na
v_{\bar{\la}})\|_{L^p}\Bigr\}\|\D_jv_{\bar{\la}}\|_{L^p}^{p-1}.
\end{split}\eeq
While applying Lemma A.5 of \cite{danchin01} that \beno
-\bigl(\tri\D_j v_{\bar{\la}}\ |\ |\D_j v_{\bar{\la}}|^{p-2}\D_j
v_{\bar{\la}}\bigr)\geq \bar{c}2^{2j}\|\D_jv_{\bar{\la}}\|_{L^p}^p
\eeno for some positive constant $\bar{c},$ from which and
\eqref{veqaf}, we deduce  that \beno
\begin{split}\|\D_j&v_{\bar{\la}}\|_{L^\infty_t(L^p)}+
\int_0^t(\la_1f_1(t')+\la_2f_2(t'))\|\D_jv_{\bar{\la}}\|_{L^p}\,dt'
+\bar{c}\mu\|\D_jv_{\bar{\la}}\|_{L^1_t(L^p)}\\
\leq &\Bigl\{\|\D_j F_{\bar{\la}}\|_{L^1_t(L^p)}
+\|\D_j((1+a)\na\Pi_{\bar{\la}})\|_{L^1_t(L^p)}+\|\D_j((1+a)
\dive[(\wt{\mu}(a)-\mu)\cM(v_{\bar{\la}})])\|_{L^1_t(L^p)}\\
&+\mu \|\D_j(a\tri v_{\bar{\la}})\|_{L^1_t(L^p)} +\|\D_j(v\cdot \na
v_{\bar{\la}} + v_{\bar{\la}}\cdot \na w + w\cdot \na
v_{\bar{\la}})\|_{L^1_t(L^p)}\Bigr\}.
\end{split}
\eeno This gives rise to \beq\label{veqadf}
\begin{split}
&\normBp{v_{\bar{\la}}}{-1+\f2p} +\la_1\|v_{\bar{\la}}\|_{L^1_{t,f_1}(B^{-1+\f2p}_{p,1})}
+\la_2\|v_{\bar{\la}}\|_{L^1_{t,f_2}(B^{-1+\f2p}_{p,1})} +\bar{c}\mu\normBpo{v_{\bar{\la}}}{1+\f2p}\\
&\leq C\Bigl\{\normBpo{F_{\bar{\la}}}{-1+\f2p} +\normBpo{(1+a)\na\Pi_{\bar{\la}}}{-1+\f2p} +\normBp{v}{-1+\f2p}\normBpo{v_{\bar{\la}}}{1+\f2p}\\
&\quad
+\bigl(\mu+\frak{C}(1+\normBq{a}{\f2q})\bigr)\normBq{a}{\f2q}\normBpo{v_{\bar{\la}}}{1+\f2p}+
\normBpo{v_{\bar{\la}}\cdot \na w + w\cdot \na
v_{\bar{\la}}}{-1+\f2p}\Bigr\}, \end{split} \eeq where the norm
$\|v_{\bar{\la}}\|_{L^1_{t,f}(B^{-1+\f2p}_{p,1})}$ is given by
Definition \ref{defpz} and $\frak{C}$ is a positive constant
depending on $\|\wt{\mu}'\|_{L^\infty(-1,1)}$ as long as
$\|a\|_{L^\infty}\leq 1.$

 Let \beq
\label{assum} \bar{T}\eqdefa \sup\bigl\{\ t<T^\ast,\ \
\normBq{a}{\f2q}\leq c_1\ \bigr\} \eeq for some $c_1$ sufficiently
small. Then \eqref{pressest} ensures that for $ t\leq \bar{T}$
\beno\begin{split} \normBpo{(1+a)\na& \Pi_{\bar{\la}}}{-1+\f2p}\\
\leq &
\frak{C} \Bigl\{\ep \normBpo{v_{\bar{\la}}}{1+\f2p} + \normBp{v}{-1+\f2p}\normBpo{v_{\bar{\la}}}{1+\f2p}\\
& +\|v_{\bar{\la}}\|_{L^1_{t,f_1}(B^{-1+\f2p}_{p,1})}+ \f{1}{\ep} \|v_{\bar{\la}}\|_{L^1_{t,f_2}(B^{-1+\f2p}_{p,1})}\\
& + (1+
\mu+\normBq{a}{\f2q})\bigl(\|a_{\bar{\la}}\|_{L^1_{t,f_1}(B^{\f2q}_{q,1})}+\normBq{a}{\f2q}\normBpo{v_{\bar{\la}}}{1+\f2p}\bigr)\Bigr\},
\end{split}
\eeno from which, Lemma \ref{presslem1}, we infer from
\eqref{veqadf} that \beq\label{vest}
\begin{split}
&\normBp{v_{\bar{\la}}}{-1+\f2p} +\la_1\|v_{\bar{\la}}\|_{L^1_{t,f_1}(B^{-1+\f2p}_{p,1})}+\la_2\|v_{\bar{\la}}\|_{L^1_{t,f_2}(B^{-1+\f2p}_{p,1})}
 +\bar{c}\mu\normBpo{v_{\bar{\la}}}{1+\f2p}\\
&\leq \frak{C} \Bigl\{\ep \normBpo{v_{\bar{\la}}}{1+\f2p} +
\normBp{v}{-1+\f2p}\normBpo{v_{\bar{\la}}}{1+\f2p}
+\|v_{\bar{\la}}\|_{L^1_{t,f_1}(B^{-1+\f2p}_{p,1})}+ \f{1}{\ep} \|v_{\bar{\la}}\|_{L^1_{t,f_2}(B^{-1+\f2p}_{p,1})}\\
& \qquad +
(1+\mu+\normBq{a}{\f2q})\bigl(\|a_{\bar{\la}}\|_{L^1_{t,f_1}(B^{\f2q}_{q,1})}+\normBq{a}{\f2q}\normBpo{v_{\bar{\la}}}{1+\f2p}
\bigr)\Bigr\}
\end{split}
\eeq for $t\leq \bar{T}$ and  some positive constant $\frak{C}$
depending on $\|\wt{\mu}'\|_{L^\infty(-1,1)}.$

\subsection{The proof of Theorem \ref{mainthm}.} We get by taking
$\la=\la_1$ in Proposition \ref{aprop} that \beq\label{aest1}
\normBq{a_{\la_1}}{\f2q}+\f{\la_1}{2}\|a_{\la_1}\|_{L^1_{t,f_1}(B^{\f2q}_{q,1})}\leq\normbq{a_0}{\f2q}+C\normBpo{v}{1+\f2p}\normBq{a_{\la_1}}{\f2q}.
\eeq Note that \beno
\|a_{\bar{\la}}\|_{L^1_{t,f_1}(B^{\f2q}_{q,1})}\leq\|a_{\la_1}\|_{L^1_{t,f_1}(B^{\f2q}_{q,1})},
\eeno By summing up \eqref{vest} and \eqref{aest1}$\times (1+\mu)$
and choosing $\e, \la_1,\la_2$ satisfying $\frak{C}\ep =
\f{\bar{c}}{2}\mu$, $\la_1= 8\frak{C}$,
$\la_2=\f{2\frak{C}^2}{\bar{c}\mu},$ we obtain  \beq\label{totalest}
\begin{split}
&(1+\mu)\normBq{a_{\la_1}}{\f2q}+\normBp{v_{\bar{\la}}}{-1+\f2p}+
\f{\la_1}{2}\bigl(\f{1+\mu}{2}\|a_{\la_1}\|_{L^1_{t,f_1}(B^{\f2q}_{q,1})}
+\|v_{\bar{\la}}\|_{L^1_{t,f_1}(B^{-1+\f2p}_{p,1})}\bigr)\\
&\quad+\f{\la_2}{2}\|v_{\bar{\la}}\|_{L^1_{t,f_2}(B^{-1+\f2p}_{p,1})}+\f{\bar{c}\mu}{2}\normBpo{v_{\bar{\la}}}{1+\f2p}\\
&\leq (1+\mu)\normbq{a_0}{\f2q}+C_1\Bigl\{(1+\mu)\normBpo{v}{1+\f2p}\normBq{a_{\la_1}}{\f2q}+\normBp{v}{-1+\f2p}\normBpo{v_{\bar{\la}}}{1+\f2p}\\
&\quad+(1+\mu+\normBq{a}{\f2q})\normBq{a}{\f2q}\normBpo{v_{\bar{\la}}}{1+\f2p}\Bigr\}
\end{split}
\eeq for $t\leq\bar{T}$ and  some positive constant $C_1$ depending
on $\|\wt{\mu}'\|_{L^\infty(-1,1)}.$

Now let $c_2$ be a small enough positive constant, which will be determined later on. We define $\Upsilon$ by
\begin{equation}\label{maxitime}
\Upsilon \overset{def}{=} sup \Bigl\{ t<T^* :
(1+\mu)\normBq{a}{\f2q}+\normBp{v}{-1+\f2p}+\mu\normBpo{v}{1+\f2p}
\leq c_2\mu\ \Bigr\}.
\end{equation}
\eqref{maxitime} together with \eqref{assum} implies that $
\Upsilon\leq\bar{T}$, if we take $c_2\leq c_1$. We shall prove that
$\Upsilon= \infty$ under the assumption of \eqref{thmassume}.
Otherwise, taking $c_2\leq
\min\bigl(\f{\bar{c}}{12C_1},\f{1}{2C_1}\bigr)$, we deduce form
\eqref{totalest} that \beno
\begin{split}
\normBp{v_{\bar{\la}}}{-1+\f2p}+
\f{1+\mu}{2}\normBq{a_{\la_1}}{\f2q}+\f{\bar{c}\mu}4\normBpo{v_{\bar{\la}}}{1+\f2p}
\leq (1+\mu)\normbq{a_0}{\f2q},
\end{split}
\eeno for $t\leq \Upsilon$. This  together with \eqref{prop2.3}
gives rise to \beq\label{last}
\begin{split}
&\normBp{v}{-1+\f2p}+ \f{1+\mu}{2}\normBq{a}{\f2q}+\f{\bar{c}\mu}4\normBpo{v}{1+\f2p} \\
&\leq
(1+\mu)\normbq{a_0}{\f2q}\exp\Bigl\{C_2\int_0^t\bigl(\normbp{w(\tau)}{1+\f2p}+\f{1}{\mu}\normbp{\na
p(\tau)}{-1+\f2p}
+\f{1}{\mu}\normbp{w(\tau)}{\f2p}^2\bigr)\,d\tau\Bigr\},
\end{split}
\eeq
for $t\leq \Upsilon$.

Combining \eqref{last} with \eqref{west}, we reach \beno
\begin{split}
&\normBp{v}{-1+\f2p}+ \f{1+\mu}{2}\normBq{a}{\f2q}+\f{\bar{c}\mu}4\normBpo{v}{1+\f2p}\\
&\leq(1+\mu)\normbq{a_0}{\f2q}\exp\Bigl\{\bigl[\f{C_3}{\mu}\normbp{u_0}{-1+\f2p}(1+\normbp{u_0}{-1+\f2p})\\
&\qquad+\f{C^2_3}{\mu^2}\normbp{u_0}{-1+\f2p}^2(1+\normbp{u_0}{-1+\f2p}^2)\bigr]\exp\bigl(\f{C_3}{\mu^2}
\normbp{u_0}{-1+\f2p}^2\bigr) \Bigr\}\\
&\leq(1+\mu)\normbq{a_0}{\f2q}\exp\Bigl\{C_4(1+\normbp{u_0}{-1+\f2p}^2)\exp\bigl(\f{C_4}{\mu^2}
\normbp{u_0}{-1+\f2p}^2\bigr) \Bigr\}\\
&\leq(1+\mu)\normbq{a_0}{\f2q}\exp\Bigl\{\bar{C}(1+\mu^2)\exp\bigl(\f{\bar{C}}{\mu^2}
\normbp{u_0}{-1+\f2p}^2\bigr) \Bigr\}
 \end{split}\eeno for $t\leq
\Upsilon$ and some positive constants $\bar{C}$ which depends on
 $\|\wt{\mu}'\|_{L^\infty(-1,1)}.$ If we take
$C_0$ large enough and $c_0$ sufficiently small in
\eqref{thmassume}, which depend on $\|\wt{\mu}'\|_{L^\infty(-1,1)},$
there holds \beno
 (1+\mu)\normBq{a}{\f2q}+\normBp{v}{-1+\f2p}+\mu\normBpo{v}{1+\f2p} \leq \f{c_2}{2}\mu \eeno
for $t\leq \Upsilon$, which contradicts with \eqref{maxitime}.
Whence we conclude that $\Upsilon =T^\ast= \infty$. This completes
the proof of  Theorem \ref{mainthm}\ef
\medskip

\renewcommand{\theequation}{\thesection.\arabic{equation}}
\setcounter{equation}{0}
\section{The proof of Theorem \ref{mainthm1}}

The proof of Theorem \ref{mainthm1} basically follows the same line
of the proof to Theorem \ref{mainthm}. More precisely,

\subsection{Estimates of the transport equation}

As we shall not use Lagrange approach  in \cite{dm}, we need first
 to investigate the following transport equation
\beq \label{transeq1} \pa_t f + u \cdot \na f =0, \qquad f|_{t=0} =
f_0 \eeq with initial data $f_0$ in multiplier space of
$B^{s}_{p,1}(\R^2)$ .

\begin{lem}\label{actionest}
{\sl Let $f\in B^{s}_{p,1}(\R^d)$ with $-1<s<1$, and $u\in
L^1((0,T); Lip(\R^d)).$  Let $X_u$ be the flow map determined by
\eqref{flow}. Then $f\circ X_u \in \wt{L}^\infty((0,T);
B^{s}_{p,1}(\R^d))$, and there holds \beq\label{actest} \|f\circ
X_u\|_{\wt{L}^\infty_t(B^{s}_{p,1})}\leq
C\|f\|_{B^{s}_{p,1}}\exp\Bigl\{C\int_0^t\|\na
u(\tau)\|_{L^\infty}\,d\tau\Bigr\}\quad\mbox{for}\quad t\leq T.\eeq}
\end{lem}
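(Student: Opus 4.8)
The plan is to reduce the estimate to the dyadic building block provided by Lemma \ref{action}, which already controls $\|\theta(\mu^{-1}D)(u\circ\psi)\|_{L^p}$ for a single frequency-localized piece composed with a measure-preserving diffeomorphism. First I would record that, under the hypothesis $u\in L^1((0,T);Lip(\R^d))$ with $\dive u=0$, the flow map $X_u(t,\cdot)$ defined by \eqref{flow} is, for each $t\le T$, a global bi-Lipschitz measure-preserving diffeomorphism of $\R^d$, with inverse $X_u^{-1}(t,\cdot)$, and with the classical Gronwall bounds
\beno
\|\na X_u(t,\cdot)\|_{L^\infty}+\|\na X_u^{-1}(t,\cdot)\|_{L^\infty}\le \exp\Bigl\{C\int_0^t\|\na u(\tau)\|_{L^\infty}\,d\tau\Bigr\}.
\eeno
This is the ingredient that lets us apply Lemma \ref{action} with $\psi=X_u(t,\cdot)$, $\phi=X_u^{-1}(t,\cdot)$.

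Next I would decompose $f=\sum_{k\in\Z}\Delta_k f$ and write $f\circ X_u=\sum_k (\Delta_k f)\circ X_u$; since $\widehat{\Delta_k f}$ is supported in an annulus of size $2^k$, Lemma \ref{action} applies to each term with $\la=2^k$ and arbitrary output frequency $\mu=2^j$, giving
\beno
\|\Delta_j\bigl((\Delta_k f)\circ X_u\bigr)\|_{L^p}\lesssim \|\Delta_k f\|_{L^p}\,\min\bigl(2^{j-k}\|\na X_u^{-1}\|_{L^\infty},\,2^{k-j}\|\na X_u\|_{L^\infty}\bigr).
\eeno
Summing in $k$ against the weight $2^{js}$, one gets $2^{js}\|\Delta_j(f\circ X_u)\|_{L^p}\lesssim \sum_k 2^{(j-k)s}\,2^{s_\pm|j-k|}\,c_k(t)\,E(t)$, where $c_k(t)=2^{ks}\|\Delta_k f\|_{L^p}\in\ell^1$, $E(t)=\exp\{C\int_0^t\|\na u\|_{L^\infty}\}$ absorbs the Lipschitz bounds on both $\na X_u$ and $\na X_u^{-1}$, and the extra gain $2^{-|j-k|}$ coming from the $\min$ beats the loss $2^{s|j-k|}$ precisely because $|s|<1$. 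Hence $\sum_k 2^{(j-k)s}\min(2^{-(j-k)},2^{j-k})\le C$ uniformly in $j$, and Young's inequality for the convolution $\ell^1*\ell^1\to\ell^1$ in the $j$ variable yields $\|f\circ X_u(t,\cdot)\|_{B^s_{p,1}}\lesssim \|f\|_{B^s_{p,1}}\,E(t)$; taking the supremum over $t\le T$ and noting that the bound for each $t$ is already in $\widetilde L^\infty_t(B^s_{p,1})$ form (the $\ell^1$-in-$j$ sum is taken after the $L^\infty_t$ norm, which is legitimate since the per-$j$ bound is monotone in $t$) gives \eqref{actest}.

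The main obstacle is the bookkeeping in the two-regime estimate: one must use the $2^{j-k}\|\na X_u^{-1}\|_{L^\infty}$ branch of the $\min$ when $j\le k$ and the $2^{k-j}\|\na X_u\|_{L^\infty}$ branch when $j\ge k$, and check that in each regime the geometric factor $2^{(j-k)s\mp(j-k)}$ is summable — which forces exactly the restriction $-1<s<1$ and explains why the lemma is stated under that hypothesis. A secondary (purely technical) point is justifying that $f\circ X_u$ lives in $\cS_h'(\R^d)$ so that its Besov norm is well defined, which follows from the low-frequency decay $\lim_{j\to-\infty}\|S_j(f\circ X_u)\|_{L^\infty}=0$; this is inherited from the corresponding property of $f$ together with the uniform bi-Lipschitz bound on $X_u$. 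Everything else — the Gronwall estimates for the flow, the measure-preserving property from $\dive u=0$, and the final convolution inequality — is routine.
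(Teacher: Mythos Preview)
Your proposal is correct and follows essentially the same route as the paper's proof: both decompose $f=\sum_\ell\Delta_\ell f$, apply Lemma~\ref{action} to each $(\Delta_\ell f)\circ X_u$ with the bi-Lipschitz bounds $\|\na X_u\|_{L^\infty},\|\na X_u^{-1}\|_{L^\infty}\le E(t)$, and then sum over $\ell$ using the geometric factor $\min(2^{j-\ell},2^{\ell-j})$ together with $-1<s<1$. The only cosmetic differences are that the paper splits the sum into $\ell<j$ and $\ell\ge j$ directly, whereas you phrase it as a $\ell^1*\ell^1$ convolution, and that you make explicit the (implicit) divergence-free assumption needed for $X_u$ to be measure-preserving in Lemma~\ref{action}.
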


\begin{proof} Let $f_\ell \eqdefa \D_\ell f,$  we deduce from Lemma \ref{action} that \beno
\|\D_j(f_\ell\circ X_u)\|_{L^\infty_t(L^p)}\leq Cd_\ell2^{-\ell
s}\|f\|_{B^{s}_{p,1}}\min\bigl(2^{j-\ell},2^{\ell-j}\bigr)\exp\Bigl\{C\int_0^t\|\na
u(\tau)\|_{L^\infty}d\tau\Bigr\},\eeno from which  and  $-1<s<1$, we
infer for any $j\in\Z$ \beq\label{actestb}
\begin{split} \|\D_j(f\circ X_u)\|_{L^\infty_t(L^p)} \leq&
\bigl(\sum\limits_{\ell<j} + \sum\limits_{\ell\geq
j}\bigr)\|\D_j(f_\ell\circ X_u)\|_{L^\infty_t(L^p)}\\
\leq& C\|f\|_{B^{s}_{p,1}}\bigl(\sum\limits_{\ell<j}d_\ell2^{-\ell
s} 2^{\ell-j} + \sum\limits_{\ell\geq
j}d_\ell2^{-\ell s} 2^{j-\ell})\exp\Bigl\{C\int_0^t\|\na u(\tau)\|_{L^\infty}\,d\tau\Bigr\}\\
\leq&Cd_j 2^{-js}\|f\|_{B^{s}_{p,1}}\exp\Bigl\{C\int_0^t\|\na
u(\tau)\|_{L^\infty}\,d\tau\Bigr\},
\end{split}
\eeq this together with Definition \ref{chaleur+} implies
\eqref{actest}, and we complete the proof of the lemma.
\end{proof}

\begin{rmk}\label{rmk5.0} The case when $\f{d}p\geq 1,$  $s\in (-1,\f{d}p),$ and $u\in
L^1((0,T); B^{1+\f{d}p}(\R^d)),$ we have a similar version of Lemma
\ref{actionest}. For simplicity, we just present the case when
$\frac{d}p=1.$ Instead of \eqref{actest}, we shall prove
\beq\label{actesta} \|f\circ
X_u\|_{\wt{L}^\infty_t(B^{1}_{p,1})}\leq
C\|f\|_{B^{1}_{p,1}}(1+\|u\|_{L^1_t(B^{2}_{p,1})})\exp\Bigl\{C\int_0^t\|\na
u(\tau)\|_{L^\infty}\,d\tau\Bigr\}.\eeq We first deduce from
\eqref{actestb} that \beq\label{actestc} \|f\circ
X_u\|_{\wt{L}^\infty_t(B^{1}_{p,\infty})}\leq
C\|f\|_{B^{1}_{p,1}}\exp\Bigl\{C\int_0^t\|\na
u(\tau)\|_{L^\infty}\,d\tau\Bigr\}.\eeq While we get by taking
$\na_y$ to \eqref{flow} that \beno \na_yX_u(t,y) = Id +\int_0^t \na
u(\tau, X_u(\tau,y))\na_yX_u(t,y)\,d\tau,\eeno from which,  and the
standard product laws in Besov spaces, we infer \beno
\begin{split}
\|\na_yX_u-Id\|_{\wt{L}^\infty_t(B^{1}_{p,\infty})}\leq
&C\int_0^t\bigl(\| \na u(\tau,
X_u(\tau,\cdot))\|_{B^1_{p,\infty}}(1+\|\na_yX_u(t,\cdot)\|_{L^\infty})\\
&\qquad+\|\na u(\tau,\cdot)\|_{L^\infty}
\|\na_yX_u-Id\|_{B^{1}_{p,\infty}}\bigr)\,d\tau.
\end{split}
\eeno Applying Gronwall's inequality and \eqref{actestc} gives
\beq\label{actestd}
\|\na_yX_u-Id\|_{\wt{L}^\infty_t(B^{1}_{p,\infty})}\leq
C\|u\|_{L^1_t(B^{2}_{p,1})}\exp\Bigl\{C\int_0^t\|\na
u(\tau)\|_{L^\infty}\,d\tau\Bigr\}. \eeq

On the other hand, notice that \beno \na_y(f\circ X_u)=\na f\circ
X_u(\na_yX_u-Id)+\na f\circ X_u, \eeno from which and Bony's
decomposition, we infer \beq\label{actestdg}
\begin{split}
 \|f\circ&
X_u\|_{\wt{L}^\infty_t(B^{1}_{p,1})}= \|\na_y(f\circ
X_u)\|_{\wt{L}^\infty_t(B^{0}_{p,1})}\\
 \leq& C\|\na f\circ
X_u\|_{\wt{L}^\infty_t(B^0_{p,1})}\bigl(1+\|\na_yX_u-Id\|_{L^\infty_t(L^\infty)}+\|\na_yX_u-Id\|_{\wt{L}^\infty_t(B^{1}_{p,\infty})}\bigr).
\end{split}
\eeq While applying \eqref{actest} yields \beno \|\na f\circ
X_u\|_{\wt{L}^\infty_t(B^0_{p,1})}\leq
C\|f\|_{B^{1}_{p,1}}\exp\Bigl\{C\int_0^t\|\na
u(\tau)\|_{L^\infty}\,d\tau\Bigr\}. \eeno
 This together with  \eqref{actestd} and \eqref{actestdg}  enures
\eqref{actesta}.
\end{rmk}

The main result of this subsection is as follows:

\begin{prop}\label{aprop1}
{\sl Let $2 < p <4$, $-1<s\leq \f2p$ and $u \in
L^1((0,T),B^{1+\f2p}_{p,1}(\R^2))$
  be a divergence free vector fields.
 Then given $f_0 \in
\cM(B^{s}_{p,1}(\R^2))$, \eqref{transeq1} has a unique solution $f
\in L^\infty((0,T); \cM(B^{s}_{p,1}(\R^2))),$ moreover, there holds
\beq \label{fest} \|f\|_{L^\infty_t(\cM(B^{s}_{p,1}))} \leq
C\|f_0\|_{\cM(B^{s}_{p,1})} \exp \Bigl\{C\int_0^t\|\na
u(\tau)\|_{L^\infty}\,d\tau\Bigr\} \eeq for any $t\in(0,T]$. }
\end{prop}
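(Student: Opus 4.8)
The plan is to exploit the Lagrangian representation of the solution and to reduce the whole statement to two applications of Lemma \ref{actionest}. First I would record that, since $u\in L^1((0,T);B^{1+\f2p}_{p,1}(\R^2))$ and $B^{\f2p}_{p,1}(\R^2)\hookrightarrow L^\infty$, we have $\int_0^t\|\na u(\tau)\|_{L^\infty}\,d\tau\lesssim\|u\|_{L^1_t(B^{1+\f2p}_{p,1})}<\infty$; hence the flow $X_u(t,\cdot)$ defined by \eqref{flow} is a $C^{0,1}$ diffeomorphism of $\R^2$, which is measure-preserving because $\dive u=0$. Writing $\phi_t\eqdefa X_u(t,\cdot)^{-1}$, the unique solution of \eqref{transeq1} is represented by $f(t)=f_0\circ\phi_t$, so existence and uniqueness in $L^\infty((0,T);\cM(B^s_{p,1}))$ will follow from the a priori bound \eqref{fest} together with a routine regularization/weak-$\ast$ limiting argument (using the linearity of \eqref{transeq1} for uniqueness).

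The second step is an action estimate for $\phi_t$ itself. I would observe that $\phi_t$ is the time-$t$ flow, in the sense of \eqref{flow}, of the reversed velocity field $\wt u(\sigma,x)\eqdefa-u(t-\sigma,x)$ on $[0,t]$, which is again divergence free, belongs to $L^1((0,t);\mathrm{Lip}(\R^2))$, and satisfies $\int_0^t\|\na\wt u(\sigma)\|_{L^\infty}\,d\sigma=\int_0^t\|\na u(\tau)\|_{L^\infty}\,d\tau$. Since $2<p<4$ forces $-1<s\le\f2p<1$, Lemma \ref{actionest} applies both to $\wt u$ and to $u$ and yields, for every $h\in B^{s}_{p,1}(\R^2)$,
\begin{equation*}
\|h\circ\phi_t\|_{B^{s}_{p,1}}+\|h\circ X_u(t,\cdot)\|_{B^{s}_{p,1}}\leq C\|h\|_{B^{s}_{p,1}}\exp\Bigl\{C\int_0^t\|\na u(\tau)\|_{L^\infty}\,d\tau\Bigr\}.
\end{equation*}

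The main step is a pull-back trick converting the multiplier norm of $f(t)=f_0\circ\phi_t$ into an ordinary Besov norm. Fix $t\in(0,T]$ and $\psi\in B^{s}_{p,1}(\R^2)$ with $\|\psi\|_{B^{s}_{p,1}}\le 1$. Setting $g\eqdefa\psi\circ X_u(t,\cdot)$ gives $g\circ\phi_t=\psi$, whence $f(t)\,\psi=(f_0\circ\phi_t)(g\circ\phi_t)=(f_0\,g)\circ\phi_t$. Applying the action estimate above to $h=f_0\,g$, then the definition of $\cM(B^s_{p,1})$, and finally the action estimate to $g=\psi\circ X_u(t,\cdot)$, I would obtain
\begin{equation*}
\begin{split}
\|f(t)\,\psi\|_{B^{s}_{p,1}}&\leq C\|f_0\,g\|_{B^{s}_{p,1}}\exp\Bigl\{C\int_0^t\|\na u(\tau)\|_{L^\infty}\,d\tau\Bigr\}\\
&\leq C\|f_0\|_{\cM(B^{s}_{p,1})}\|g\|_{B^{s}_{p,1}}\exp\Bigl\{C\int_0^t\|\na u(\tau)\|_{L^\infty}\,d\tau\Bigr\}\\
&\leq C\|f_0\|_{\cM(B^{s}_{p,1})}\exp\Bigl\{C\int_0^t\|\na u(\tau)\|_{L^\infty}\,d\tau\Bigr\}.
\end{split}
\end{equation*}
Taking the supremum over such $\psi$ proves \eqref{fest} at time $t$, and since the right-hand side is uniformly bounded by $C\|f_0\|_{\cM(B^s_{p,1})}\exp\{C\|\na u\|_{L^1_T(L^\infty)}\}$ for all $t\le T$, it follows that $f\in L^\infty((0,T);\cM(B^{s}_{p,1}(\R^2)))$.

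The point that requires care is that the multiplier norm is not transported by \eqref{transeq1}, so one cannot run the commutator/energy argument of Proposition \ref{aprop} directly; the identity $f(t)\,\psi=\bigl(f_0\,(\psi\circ X_u(t,\cdot))\bigr)\circ\phi_t$ is exactly what makes Lemma \ref{actionest} applicable twice. I would also double-check that $s=\f2p<1$ (which holds because $p>2$), so that the borderline variant of Remark \ref{rmk5.0} is not needed, and that the regularization used for existence and uniqueness is compatible with the a priori estimate — both being routine.
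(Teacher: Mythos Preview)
Your proof is correct and follows essentially the same route as the paper: both represent $f(t)=f_0\circ X_u^{-1}(t,\cdot)$, write $f(t)\psi=\bigl(f_0\,(\psi\circ X_u(t,\cdot))\bigr)\circ X_u^{-1}(t,\cdot)$, and apply Lemma~\ref{actionest} twice (once for the composition with $X_u^{-1}$, once for $\psi\circ X_u$) together with the definition of the multiplier norm. Your write-up is slightly more explicit than the paper's in justifying the action estimate for the inverse flow via the reversed field $\wt u$, and in checking that $s\le\f2p<1$ (since $p>2$) so that Lemma~\ref{actionest} applies without recourse to Remark~\ref{rmk5.0}; the paper leaves both points implicit.
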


\begin{proof} Both the existence and uniqueness part of Proposition
\ref{aprop1} follows from \eqref{fest}. Indeed let $X_u$ be the flow
map determined by \eqref{flow}. Then we infer from \eqref{transeq1}
that $f(t,x)=f_0(X_u^{-1}(t,x)),$ from which, Definition \ref{deffm}
and Lemma \ref{actionest}, we infer \beno
\begin{split}
\|f(t)\|_{\cM(B^{s}_{p,1})}=&\sup_{\|\psi\|_{B^{s}_{p,1}}=1}\|\psi f(t)\|_{B^{s}_{p,1}}\\
=&\sup_{\|\psi\|_{B^{s}_{p,1}}=1} \|(\psi\circ X_u(t) f_0)\circ X_u^{-1}(t)\|_{B^{s}_{p,1}}\\
\leq& C\sup_{\|\psi\|_{B^{s}_{p,1}}=1}\|\psi\circ
X_u(t)f_0\|_{B^{s}_{p,1}}\exp\Bigl\{C\int_0^t\|\na
u(\tau)\|_{L^\infty}\,d\tau\Bigr\},
\end{split}
\eeno applying  Lemma \ref{actionest} once again leads to \beno
\begin{split}
\|f(t)\|_{\cM(B^{s}_{p,1})}\leq
&C\|f_0\|_{\cM(B^{s}_{p,1})}\exp\Bigl\{C\int_0^t\|\na
u(\tau)\|_{L^\infty}\Bigr\}\sup_{\|\psi\|_{B^{s}_{p,1}}=1}\|\psi\circ
X_u(t)\|_{B^{s}_{p,1}}\\
\leq&C\|f_0\|_{\cM(B^{s}_{p,1})}\exp\Bigl\{C\int_0^t\|\na u(\tau)\|_{L^\infty}\Bigr\}\sup_{\|\psi\|_{B^{s}_{p,1}}=1}\|\psi\|_{B^{s}_{p,1}}\\
\leq&C\|f_0\|_{\cM(B^{s}_{p,1})}\exp\Bigl\{C\int_0^t\|\na
u(\tau)\|_{L^\infty}\Bigr\}\quad\mbox{for any}\quad t\leq T.
\end{split}
\eeno This completes the proof of Proposition \ref{aprop1}.
\end{proof}

\subsection{Estimates of the pressure}
In this subsection, we aim at providing similar version of
Proposition \ref{pressprop} in the case when $a\in L^\infty((0,T);
\cM(B^{-1+\f2p}_{p,1}(\R^2)))$ and $\wt{\mu}(a)-\mu \in
L^\infty((0,T); \cM(B^{\f2p}_{p,1}(\R^2)))$.

\begin{prop}\label{pressprop1}
{\sl Let $p \in [2, 4),$  $a \in L^\infty((0,T);
\cM(B^{-1+\f2p}_{p,1}(\R^2)))$ and $\wt{\mu}(a)-\mu \in
L^\infty((0,T);$ $ \cM(B^{\f2p}_{p,1}(\R^2))).$ Let
 $w,v \in \widetilde{L}_T^\infty(B^{-1+\f2p}_{p,1})\cap L^1_T(B^{1+\f2p}_{p,1})$ and $\na p \in L^1_T(B^{-1+\f2p}_{p,1})$.
 Then \eqref{presseq} has
a unique solution with $\na \Pi_1 \in L^1_T(B^{-1+\f2p}_{p,1}),$ and
for any $ \ep >0,$ $t\leq T,$ there holds \beq \label{pressest1}
\begin{split}
\normBpo{\na \Pi_{\bar{\la}}}{-1+\f2p} &\leq
\f{C}{1-C\|a\|_{L^\infty_T(\cM(B^{-1+\f2p}_{p,1}))}} \Bigl\{\ep
\normBpo{v_{\bar{\la}}}{1+\f2p} +
 \normBp{v}{-1+\f2p}\normBpo{v_{\bar{\la}}}{1+\f2p}\\
& +\|v_{\bar{\la}}\|_{L^1_{t,f_1}(B^{-1+\f2p}_{p,1})}+ \f{1}{\ep}
\|v_{\bar{\la}}\|_{L^1_{t,f_2}(B^{-1+\f2p}_{p,1})} \\
&+\bigl(\normBpo{v_{\bar{\la}}}{1+\f2p}+\int_0^tf_1(\tau)d\tau\bigr)\bigl[\mu
\|a\|_{L^\infty_t(
\cM(B^{-1+\f2p}_{p,1}))}\\
&+(1+\|a\|_{L^\infty_t( \cM(B^{-1+\f2p}_{p,1}))})\|\wt{\mu}(a)-\mu
\|_{ L^\infty_t(\cM(B^{\f2p}_{p,1}))}\bigr]\Bigr\}
\end{split}
\eeq provided that $C\|a\|_{L^\infty_T(\cM(B^{-1+\f2p}_{p,1}))} \leq
\f12,$ where $f_1(t), f_2(t)$  and $\Pi_{\bar{\la}}, v_{\bar{\la}}$
are defined by \eqref{prop2.3}.}
\end{prop}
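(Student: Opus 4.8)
The plan is to reproduce the proof of Proposition \ref{pressprop} line by line, with two changes: every invocation of a Besov product law involving the density $a$ or $\wt{\mu}(a)-\mu$ is replaced by the defining inequality for the multiplier norms (Definition \ref{deffm}), and Lemma \ref{presslem2} is replaced by a short direct computation of $\normBpo{F_{\bar{\la}}}{-1+\f2p}$. As in Proposition \ref{pressprop}, both the existence and uniqueness of $\na\Pi_1\in L^1_T(B^{-1+\f2p}_{p,1})$ follow from the a priori bound \eqref{pressest1} applied to a standard approximation scheme for \eqref{presseq}, so I would only prove \eqref{pressest1} for smooth enough solutions. Weighting \eqref{presseq} by $\exp\{-\la_1\int_0^tf_1-\la_2\int_0^tf_2\}$ and inverting $-\tri$ gives, exactly as in the proof of Proposition \ref{pressprop},
\[
\begin{aligned}
\na\Pi_{\bar{\la}}=\na(-\tri)^{-1}\Bigl(&\dive(a\na\Pi_{\bar{\la}})+\dive F_{\bar{\la}}-\dive\big(v\cdot\na v_{\bar{\la}}+v_{\bar{\la}}\cdot\na w+w\cdot\na v_{\bar{\la}}\big)\\
&+\dive\big((1+a)\dive((\wt{\mu}(a)-\mu)\cM(v_{\bar{\la}}))\big)+\mu\,\dive(a\tri v_{\bar{\la}})\Bigr),
\end{aligned}
\]
and since $\na(-\tri)^{-1}\dive$ is a combination of double Riesz transforms it is bounded on $B^{-1+\f2p}_{p,1}(\R^2)$ for $1<p<\infty$; hence it suffices to estimate the $L^1_t(B^{-1+\f2p}_{p,1})$ norm of each vector field inside the outermost $\dive$.

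The pressure feedback term is handled by $\|a\na\Pi_{\bar{\la}}\|_{B^{-1+\f2p}_{p,1}}\le\|a\|_{\cM(B^{-1+\f2p}_{p,1})}\|\na\Pi_{\bar{\la}}\|_{B^{-1+\f2p}_{p,1}}$; under $C\|a\|_{L^\infty_T(\cM(B^{-1+\f2p}_{p,1}))}\le\f12$ this is absorbed into the left-hand side, producing the prefactor $C/(1-C\|a\|_{L^\infty_T(\cM(B^{-1+\f2p}_{p,1}))})$. The transport-type term $v\cdot\na v_{\bar{\la}}$ is bounded by the same Besov product law used in Proposition \ref{pressprop}, namely $\lesssim\normBp{v}{-1+\f2p}\normBpo{v_{\bar{\la}}}{1+\f2p}$ (valid because $p<4$), and the term $v_{\bar{\la}}\cdot\na w+w\cdot\na v_{\bar{\la}}$ is precisely Lemma \ref{presslem1} with $v_{\bar{\la}}$ in place of $v$ (the lemma is linear in that argument and does not involve the density), giving $\ep\normBpo{v_{\bar{\la}}}{1+\f2p}+\|v_{\bar{\la}}\|_{L^1_{t,f_1}(B^{-1+\f2p}_{p,1})}+\f1\ep\|v_{\bar{\la}}\|_{L^1_{t,f_2}(B^{-1+\f2p}_{p,1})}$. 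For $\mu a\tri v_{\bar{\la}}$ I would use that $a$ is a multiplier on $B^{-1+\f2p}_{p,1}$ with $\tri v_{\bar{\la}}\in B^{-1+\f2p}_{p,1}$, and for $(1+a)\dive((\wt{\mu}(a)-\mu)\cM(v_{\bar{\la}}))$ that $\cM(v_{\bar{\la}})\in B^{\f2p}_{p,1}$ with norm $\lesssim\|v_{\bar{\la}}\|_{B^{1+\f2p}_{p,1}}$, that $\wt{\mu}(a)-\mu$ is a multiplier on $B^{\f2p}_{p,1}$, that $\dive$ maps $B^{\f2p}_{p,1}$ into $B^{-1+\f2p}_{p,1}$, and that $1+a$ is a multiplier on $B^{-1+\f2p}_{p,1}$ (writing $(1+a)g=g+ag$ and $\|1+a\|_{\cM}\le1+\|a\|_{\cM}$). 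Integrating these in time yields the $\normBpo{v_{\bar{\la}}}{1+\f2p}$ contribution against the bracket $\bigl[\mu\|a\|_{L^\infty_t(\cM(B^{-1+\f2p}_{p,1}))}+(1+\|a\|_{L^\infty_t(\cM(B^{-1+\f2p}_{p,1}))})\|\wt{\mu}(a)-\mu\|_{L^\infty_t(\cM(B^{\f2p}_{p,1}))}\bigr]$ appearing in \eqref{pressest1}.

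The one genuinely new computation replaces Lemma \ref{presslem2}: for $F=(1+a)\dive[(\wt{\mu}(a)-\mu)\cM(w)]+\mu a\tri w-a\na p$, I would estimate pointwise in $\tau$, using the multiplier norms as above, $\|\mu a\tri w-a\na p\|_{B^{-1+\f2p}_{p,1}}\le\mu\|a\|_{\cM(B^{-1+\f2p}_{p,1})}\bigl(\|w\|_{B^{1+\f2p}_{p,1}}+\f1\mu\|\na p\|_{B^{-1+\f2p}_{p,1}}\bigr)=\mu\|a\|_{\cM(B^{-1+\f2p}_{p,1})}f_1(\tau)$ and $\|(1+a)\dive[(\wt{\mu}(a)-\mu)\cM(w)]\|_{B^{-1+\f2p}_{p,1}}\lesssim(1+\|a\|_{\cM(B^{-1+\f2p}_{p,1})})\|\wt{\mu}(a)-\mu\|_{\cM(B^{\f2p}_{p,1})}\|w\|_{B^{1+\f2p}_{p,1}}\le(1+\|a\|_{\cM(B^{-1+\f2p}_{p,1})})\|\wt{\mu}(a)-\mu\|_{\cM(B^{\f2p}_{p,1})}f_1(\tau)$; since the exponential weight defining $F_{\bar{\la}}$ is $\le1$, integrating over $(0,t)$ gives $\normBpo{F_{\bar{\la}}}{-1+\f2p}\le\bigl[\mu\|a\|_{L^\infty_t(\cM(B^{-1+\f2p}_{p,1}))}+(1+\|a\|_{L^\infty_t(\cM(B^{-1+\f2p}_{p,1}))})\|\wt{\mu}(a)-\mu\|_{L^\infty_t(\cM(B^{\f2p}_{p,1}))}\bigr]\int_0^tf_1(\tau)\,d\tau$, which is exactly the $\int_0^tf_1(\tau)\,d\tau$ term in \eqref{pressest1}. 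Collecting all contributions and dividing by $1-C\|a\|_{L^\infty_T(\cM(B^{-1+\f2p}_{p,1}))}$ then yields \eqref{pressest1}. I do not anticipate a serious obstacle: the proof is structurally that of Proposition \ref{pressprop} with the density placed in multiplier spaces. The point requiring care is the algebraic compatibility in the terms $(1+a)\dive[(\wt{\mu}(a)-\mu)\cM(\cdot)]$, where one needs $\wt{\mu}(a)-\mu$ to be a multiplier precisely on $B^{\f2p}_{p,1}$ (the space of $\cM(\cdot)$) and $1+a$ to be a multiplier precisely on $B^{-1+\f2p}_{p,1}$ (the space of the subsequent $\dive$), which is why the statement assumes both $a\in\cM(B^{-1+\f2p}_{p,1})$ and $\wt{\mu}(a)\in\cM(B^{\f2p}_{p,1})$, together with exploiting the bound $\le1$ on the weight so that the $w$- and $\na p$-factors of $F$ become $\int_0^tf_1$ rather than a weighted norm of $a$.
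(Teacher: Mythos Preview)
Your proposal is correct and follows essentially the same route as the paper: start from the identity \eqref{2.9}, replace every product estimate involving $a$ or $\wt{\mu}(a)-\mu$ by the corresponding multiplier-norm inequality from Definition~\ref{deffm}, keep Lemma~\ref{presslem1} unchanged for the $w$--$v_{\bar{\la}}$ interaction, and bound $F_{\bar{\la}}$ directly via the multiplier norms to obtain the $\int_0^t f_1$ contribution (this is exactly the paper's \eqref{fla}); the absorption of $a\na\Pi_{\bar{\la}}$ under $C\|a\|_{L^\infty_T(\cM(B^{-1+\f2p}_{p,1}))}\le\f12$ then gives \eqref{pressest1}. Your remark that the weight is $\le 1$ is the correct justification for passing from pointwise-in-$\tau$ bounds on $F$ to the bound on $F_{\bar{\la}}$.
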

\begin{proof}  Similar to the proof of Proposition \ref{pressprop}, we just present the proof of \eqref{pressest1}
for smooth enough solutions of \eqref{presseq}. Indeed along the
same line to the proof of Proposition \ref{pressprop}, we have
\eqref{2.9}. While applying Definition \ref{deffm} and standard
product laws in Besov spaces leads to \beno
\begin{split}
&\normBpo{v\cdot\na v_{\bar{\la}}}{-1+\f2p}\lesssim \normBp{v}{-1+\f2p}\normBpo{v_{\bar{\la}}}{1+\f2p},\\
&\normBpo{a\na \Pi_{\bar{\la}}}{-1+\f2p} \lesssim \|a\|_{L^\infty_t(\cM(B^{-1+\f2p}_{p,1}))}\normBpo{\na \Pi_{\bar{\la}}}{-1+\f2p},\\
&\normBpo{a\tri v_{\bar{\la}}}{-1+\f2p} \lesssim \|a\|_{L^\infty_t( \cM(B^{-1+\f2p}_{p,1}))}\normBpo{v_{\bar{\la}}}{1+\f2p},\\
&\normBpo{a(\mu\tri w-\na p)}{-1+\f2p}\lesssim
\mu\|a\|_{L^\infty_t(\cM(B^{-1+\f2p}_{p,1}))}\int_0^t
f_1(\tau)\,d\tau,
\end{split}
\eeno and \beno
\begin{split}
&\normBpo{(1+a)\dive((\wt{\mu}(a)-\mu)\cM(v_{\bar{\la}}))}{-1+\f2p}\\
&\qquad\lesssim (1+\|a\|_{L^\infty_t(
\cM(B^{-1+\f2p}_{p,1}))})\|\wt{\mu}(a)-\mu
\|_{ L^\infty_t(\cM(B^{\f2p}_{p,1}))}\normBpo{v_{\bar{\la}}}{1+\f2p},\\
&\normBpo{(1+a)\dive((\wt{\mu}(a)-\mu)\cM(w))}{-1+\f2p}\\
&\qquad\lesssim (1+\|a\|_{L^\infty_t(
\cM(B^{-1+\f2p}_{p,1}))})\|\wt{\mu}(a)-\mu \|_{
L^\infty_t(\cM(B^{\f2p}_{p,1}))}\int_0^t f_1(\tau)\,d\tau,
\end{split}
\eeno so that \beq\label{fla}
\begin{split}
\|F_{\bar{\la}}&\|_{L^1_t(B^{-1+\f2p}_{p,1})}\leq \normBpo{a(\mu\tri
w-\na
p)}{-1+\f2p}+\normBpo{(1+a)\dive((\wt{\mu}(a)-\mu)\cM(w))}{-1+\f2p}\\
\lesssim& \bigl[\mu\|a\|_{L^\infty_t(
\cM(B^{-1+\f2p}_{p,1}))}+(1+\|a\|_{L^\infty_t(
\cM(B^{-1+\f2p}_{p,1}))})\|\wt{\mu}(a)-\mu \|_{
L^\infty_t(\cM(B^{\f2p}_{p,1}))}\bigr]\int_0^t f_1(\tau)\,d\tau.
\end{split}
\eeq Substituting the above estimates and Lemma \ref{presslem1} into
\eqref{2.9} ensures \eqref{pressest1} provided that $$
C\|a\|_{L^\infty_T(\cM(B^{-1+\f2p}_{p,1}))}\leq \f12$$ This
completes the proof of  Proposition \ref{pressprop1}.
\end{proof}

\subsection{The proof of Theorem
\ref{mainthm1}}
 For $p\in (2,4),$ given
$a_0\in \cM(B^{-1+\f2p}_{p,1}(\R^2))$ with $\wt{\mu}(a_0)-\mu \in
\cM(B^{\f2p}_{p,1}(\R^2))$, $u_0\in B^{-1+\f2p}_{p,1}(\R^2)$ with
$\|a_0\|_{\cM(B^{-1+\f2p}_{p,1})}+\|\wt{\mu}(a_0)-\mu\|_{\cM(B^{\f2p}_{p,1})}$
being sufficiently small, it follows from Theorem 2 in \cite{dm} and
Proposition \ref{aprop1} that there exists a positive time $T$ so
that \eqref{INS1} has a unique solution $(a,u,\na \Pi)$ with
\begin{equation}\label{local1}
\begin{split}
&a\in L^\infty((0,T);\cM(B^{-1+\f2p}_{p,1}(\R^2))),\qquad
\wt{\mu}(a)-\mu \in
L^\infty((0,T);\cM(B^{\f2p}_{p,1}(\R^2))),\\
&u\in\cC([0,T];B^{-1+\f2p}_{p,1}(\mathbb{R}^2))\cap
L^1((0,T);B^{1+\f2p}_{p,1}(\mathbb{R}^2)),\qquad \na\Pi\in
L^1((0,T);B^{-1+\f2p}_{p,1}(\mathbb{R}^2)).
\end{split}
\end{equation}
We denote by $T^*$  the largest possible time so that there holds
\eqref{local1}. Hence the proof of Theorem \ref{mainthm1} is reduced
to show  that $T^*=\infty$ provided that there holds
\eqref{thmassumea}. Toward this, as in the proof of Theorem
\ref{mainthm},  we split the velocity field $u$ as $w+v,$ with $w,
(a,v)$ solving \eqref{weq} and \eqref{veq} respectively. Then thanks
to Proposition \ref{prop3.1}, it remains to solve \eqref{veq}
globally. In order to do so, let $f_1(t)$, $f_2(t)$,
$v_{\bar{\la}}$, $\na \Pi_{\bar{\la}}$ be given by \eqref{prop2.3},
along the same line to the proof of Theorem \ref{mainthm}, we deduce
from \eqref{veqaf} that \beq\label{locala}
\begin{split}
&\normBp{v_{\bar{\la}}}{-1+\f2p} +\la_1\|v_{\bar{\la}}\|_{L^1_{t,f_1}(B^{-1+\f2p}_{p,1})}+\la_2\|v_{\bar{\la}}\|_{L^1_{t,f_2}(B^{-1+\f2p}_{p,1})} +\bar{c}\mu\normBpo{v_{\bar{\la}}}{1+\f2p}\\
&\leq
C\Bigl\{\|F_{\bar{\la}}\|_{L^1_t(B^{-1+\f2p}_{p,1})}+\normBpo{(1+a)\na\Pi_{\bar{\la}}}{-1+\f2p}
+\normBp{v}{-1+\f2p}\normBpo{v_{\bar{\la}}}{1+\f2p}\\
&\qquad+\normBpo{v_{\bar{\la}}\cdot \na w + w\cdot \na
v_{\bar{\la}}}{-1+\f2p}+\mu\|a\|_{L^\infty_t(\cM(B^{-1+\f2p}_{p,1}))}\normBpo{v_{\bar{\la}}}{1+\f2p}\\
&\qquad
+\normBpo{(1+a)\dive((\wt{\mu}(a)-\mu)\cM(v_{\bar{\la}}))}{-1+\f2p}\Bigr\},
\end{split} \eeq where the norm $\|v_{\bar{\la}}\|_{L^1_{t,f}(B^{-1+\f2p}_{p,1})}$ is given by Definition \ref{defpz}.

  We denote \beq \label{assuma} \bar{T}\eqdefa
\sup\bigl\{\ t<T^\ast,\ \ \mu\|a\|_{L^\infty_t(
\cM(B^{-1+\f2p}_{p,1}))}+\|\wt{\mu}(a)-\mu \|_{
L^\infty_t(\cM(B^{\f2p}_{p,1}))}\leq c_1\mu\ \bigr\} \eeq for some
$c_1$ being sufficiently small. Then we get by substituting
\eqref{pressest1} and \eqref{fla} into \eqref{locala} that for $
t\leq \bar{T}$ \beq\label{vesta}
\begin{split}
&\normBp{v_{\bar{\la}}}{-1+\f2p} +\la_1\|v_{\bar{\la}}\|_{L^1_{t,f_1}(B^{-1+\f2p}_{p,1})}+\la_2\|v_{\bar{\la}}\|_{L^1_{t,f_2}(B^{-1+\f2p}_{p,1})} +\bar{c}\mu\normBpo{v_{\bar{\la}}}{1+\f2p}\\
&\leq C \Bigl\{\ep \normBpo{v_{\bar{\la}}}{1+\f2p} + \normBp{v}{-1+\f2p}\normBpo{v_{\bar{\la}}}{1+\f2p}
+\|v_{\bar{\la}}\|_{L^1_{t,f_1}(B^{-1+\f2p}_{p,1})}\\
& \qquad+ \f{1}{\ep}
\|v_{\bar{\la}}\|_{L^1_{t,f_2}(B^{-1+\f2p}_{p,1})} +\bigl(\mu
\|a\|_{L^\infty_t(\cM(B^{-1+\f2p}_{p,1}))}+\|\wt{\mu}(a)-\mu \|_{
L^\infty_t(\cM(B^{\f2p}_{p,1}))}\bigr)\int_0^tf_1(\tau)\,d\tau
\Bigr\}.
\end{split}
\eeq Choosing $\e, \la_1$ and $\la_2$ in \eqref{vesta} so that $C\ep
= \f{\bar{c}\mu}{4}$, $\la_1 =2C$, $\la_2=\f{8C^2}{\bar{c}\mu}$ and
$c_1\leq\f{\bar{c}}{4C},$ we obtain \beq\label{totalest1}
\begin{split}
&\normBp{v_{\bar{\la}}}{-1+\f2p}+\f{\la_1}{2}\|v_{\bar{\la}}\|_{L^1_{t,f_1}(B^{-1+\f2p}_{p,1})}
+\f{\la_2}{2}\|v_{\bar{\la}}\|_{L^1_{t,f_2}(B^{-1+\f2p}_{p,1})}+\f{\bar{c}\mu}{2}\normBpo{v_{\bar{\la}}}{1+\f2p}\\
&\leq
C_1\Bigl\{\normBp{v}{-1+\f2p}\normBpo{v_{\bar{\la}}}{1+\f2p}+\bigl(\mu
\|a\|_{L^\infty_t(\cM(B^{-1+\f2p}_{p,1}))}\\
&\qquad+\|\wt{\mu}(a)-\mu \|_{
L^\infty_t(\cM(B^{\f2p}_{p,1}))}\bigr)\int_0^tf_1(\tau)\,d\tau
\Bigr\} \quad\mbox{for}\quad t\leq\bar{T}.
\end{split}
\eeq Now let $c_2$ be a small enough positive constant, which will
be determined later on. We define $\Upsilon$ by
\beq\label{maxitimea}
\begin{split}
\Upsilon \overset{def}{=} sup \bigl\{ t<T^* :&\
\normBp{v}{-1+\f2p}+\|\wt{\mu}(a)-\mu\|_{L^\infty_t(
\cM(B^{\f2p}_{p,1}))}\\
&+\mu(\|a\|_{L^\infty_t(
\cM(B^{-1+\f2p}_{p,1}))}+\normBpo{v}{1+\f2p}) \leq c_2\mu\bigr\}.
\end{split}\eeq   \eqref{assuma} and \eqref{maxitimea} implies that $
\Upsilon\leq\bar{T}$ if we take $c_2\leq c_1$. We shall prove that
$\Upsilon= \infty$ under the assumption \eqref{thmassumea}.
Otherwise, taking $c_2\leq \f{\bar{c}}{8C_1}$, we deduce form
\eqref{totalest1} that \beq\label{5.16a}
\begin{split}
&\normBp{v_{\bar{\la}}}{-1+\f2p}+\f{\bar{c}\mu}4\normBpo{v_{\bar{\la}}}{1+\f2p}\\
&\quad\leq
 C(\mu\|a\|_{L^\infty_t(
\cM(B^{-1+\f2p}_{p,1}))}+\|\wt{\mu}(a)-\mu\|_{L^\infty_t(
\cM(B^{\f2p}_{p,1}))})\int_0^t f_1(\tau)d\tau\quad\mbox{for}\ \
t\leq \Upsilon.
\end{split}
\eeq

On the other hand, notice from \eqref{veq} that both $a$ and
$\wt{\mu}(a)-\mu$ satisfy \eqref{transeq1} so that applying
Proposition \ref{aprop1} gives rise to \beq\label{fest1}
\|a\|_{L^\infty_t(\cM(B^{-1+\f2p}_{p,1}))}\leq
C\|a_0\|_{\cM(B^{-1+\f2p}_{p,1})}\exp\Bigl\{C(\normBpo{v}{1+\f2p}+\normBpo{w}{1+\f2p})\Bigr\},
\eeq and \beq\label{fest2} \|\wt{\mu}(a)-\mu\|_{L^\infty_t(
\cM(B^{\f2p}_{p,1}))}\leq
C\|\wt{\mu}(a_0)-\mu\|_{\cM(B^{\f2p}_{p,1})}\exp\Bigl\{C(\normBpo{v}{1+\f2p}+\normBpo{w}{1+\f2p})\Bigr\}.
\eeq Then we get by summing up \eqref{5.16a} with
\eqref{fest1}$\times\mu$ and \eqref{fest2} that
 \beno
\begin{split}
&\normBp{v_{\bar{\la}}}{-1+\f2p}+
\|\wt{\mu}(a)-\mu\|_{L^\infty_t(\cM(B^{\f2p}_{p,1}))}+
\f{\mu}{4}(\|a\|_{L^\infty_t(
\cM(B^{-1+\f2p}_{p,1}))}+\normBpo{v_{\bar{\la}}}{1+\f2p})\\
&\quad\leq
C(\mu\|a_0\|_{\cM(B^{-1+\f2p}_{p,1})}+\|\wt{\mu}(a_0)-\mu\|_{\cM(B^{\f2p}_{p,1})})\exp\bigl\{C\normBpo{w}{1+\f2p}\bigr\}
\bigl(\int_0^t f_1(\tau)d\tau+1\bigr),
\end{split}
\eeno for $t\leq \Upsilon$. This together with \eqref{prop2.3} gives
rise to \beno
\begin{split}
&\normBp{v}{-1+\f2p}+ \|\wt{\mu}(a)-\mu\|_{L^\infty_t(
\cM(B^{\f2p}_{p,1}))}+ \f{\mu}{4}(\|a\|_{L^\infty_t(
\cM(B^{-1+\f2p}_{p,1}))}+\normBpo{v}{1+\f2p}) \\
&\leq
C(\mu\|a_0\|_{\cM(B^{-1+\f2p}_{p,1})}+\|\wt{\mu}(a_0)-\mu\|_{\cM(B^{\f2p}_{p,1})})\\
&\qquad\times\Bigl(\int_0^t
\bigl(\normbp{w(\tau)}{1+\f2p}+\f{1}{\mu}\normbp{\na
p(\tau)}{-1+\f2p}\bigr)d\tau+1\Bigr)\exp\bigl\{C\normBpo{w}{1+\f2p}\bigr\}\\
&\qquad\times\exp\Bigl\{4C\int_0^t\bigl(\normbp{w(\tau)}{1+\f2p}+\f{1}{\mu}\normbp{\na
p(\tau)}{-1+\f2p}
+\f{1}{\mu}\normbp{w(\tau)}{\f2p}^2\bigr)\,d\tau\Bigr\}\\
&\leq
C(\mu\|a_0\|_{\cM(B^{-1+\f2p}_{p,1})}+\|\wt{\mu}(a_0)-\mu\|_{\cM(B^{\f2p}_{p,1})})\\
&\qquad\times\exp\Bigl\{4C\int_0^t\bigl(\normbp{w(\tau)}{1+\f2p}+\f{1}{\mu}\normbp{\na
p(\tau)}{-1+\f2p}
+\f{1}{\mu}\normbp{w(\tau)}{\f2p}^2\bigr)\,d\tau\Bigr\},
\end{split}
\eeno from which and \eqref{west}, we infer \beq\label{lasta}
\begin{split}
&\normBp{v}{-1+\f2p}+ \|\wt{\mu}(a)-\mu\|_{L^\infty_t(
\cM(B^{\f2p}_{p,1}))}+ \f{\mu}{4}(\|a\|_{L^\infty_t(
\cM(B^{-1+\f2p}_{p,1}))}+\normBpo{v}{1+\f2p}) \\  &\leq
C(\mu\|a_0\|_{\cM(B^{-1+\f2p}_{p,1})}+\|\wt{\mu}(a_0)-\mu\|_{\cM(B^{\f2p}_{p,1})})\exp\Bigl\{\bar{C}(1+\mu^2)\exp\bigl(\f{\bar{C}}{\mu^2}
\normbp{u_0}{-1+\f2p}^2\bigr) \Bigr\}
\end{split}
 \eeq for $t\leq
\Upsilon$ and some positive constants $\bar{C}$ which depends on
$\bar{c}$ and $c_2$. If we take $C_0$ large enough and $c_0$
sufficiently small in \eqref{thmassumea}, there holds \beno
\normBp{v}{-1+\f2p}+ \|\wt{\mu}(a)-\mu\|_{L^\infty_t(
\cM(B^{\f2p}_{p,1}))}+ \mu(\|a\|_{L^\infty_t(
\cM(B^{-1+\f2p}_{p,1}))}+\normBpo{v}{1+\f2p}) \leq \f{c_2}{2}\mu
\eeno for $t\leq \Upsilon$, which contradicts with
\eqref{maxitimea}. Whence we conclude that $\Upsilon =T^\ast=
\infty$. This completes the proof of  Theorem \ref{mainthm1}\ef
\medskip

\noindent {\bf Acknowledgments.} We would like to thank the
anonymous referees for the valuable suggestions concerning the
structure of this paper and profitable suggestions. Part of this
work was done when M. Paicu was visiting Morningside Center of the
Chinese Academy of Sciences in the Fall of 2011. We appreciate the
hospitality and the financial support from MCM. P. Zhang is
partially supported by NSF of China under Grant 10421101 and
10931007, the one hundred talents' plan from Chinese Academy of
Sciences under Grant GJHZ200829 and the National Center for
Mathematics and Interdisciplinary Sciences, CAS.

\end{document}